\documentclass[pdflatex,sn-mathphys-num]{sn-jnl}

\usepackage[T1]{fontenc}
\usepackage[utf8]{inputenc}
\usepackage{amsmath,amssymb,amsfonts,amsthm,mathtools}
\usepackage{bm}
\usepackage{graphicx}
\usepackage{subcaption} 
\usepackage{booktabs}
\usepackage{multirow}
\usepackage{enumitem}
\usepackage{xcolor}
\usepackage{hyperref}
\usepackage{array}
\usepackage{caption}
\usepackage{setspace}
\usepackage{comment}
\hypersetup{colorlinks=true,linkcolor=blue,citecolor=blue,urlcolor=blue}

\usepackage{mathrsfs}%
\usepackage{hyperref}
\usepackage[title]{appendix}%
\usepackage{textcomp}%
\usepackage{manyfoot}%
\usepackage{algorithm}%
\usepackage{listings}%

\theoremstyle{thmstyleone}%
\newtheorem{theorem}{Theorem}
\newtheorem{assumption}{Assumption}
\newtheorem{lemma}{Lemma}

\theoremstyle{thmstyletwo}%
\newtheorem{remark}{Remark}%

\theoremstyle{thmstylethree}%

\begin{document}

\title[Penalty-scaling effects in NIPG of viscous rotating SWEs]{Penalty-scaling effects in nonsymmetric interior-penalty DG discretizations of viscous rotating shallow-water equations}


\author[1]{\fnm{Xue} \sur{Zhang}}\email{zhangxue\_1998@nudt.edu.cn}

\author*[2]{\fnm{Jingmin} \sur{Xia}}\email{jingmin.xia@nudt.edu.cn}

\author[1]{\fnm{Xu} \sur{Qian}}\email{qianxu@nudt.edu.cn}

\affil[1]{\orgdiv{College of Science}, \orgname{National University of Defense Technology}, \orgaddress{\street{No.~109 Deya Road}, \city{Changsha}, \postcode{410073}, \state{Hunan}, \country{China}}}

\affil[2]{\orgdiv{College of Meteorology and Oceanography}, \orgname{National University of Defense Technology}, \orgaddress{\street{No.~109 Deya Road}, \city{Changsha}, \postcode{410073}, \state{Hunan}, \country{China}}}


\abstract{
We investigate how the scaling of the interior-penalty parameter affects nonsymmetric interior-penalty Galerkin (NIPG) discretizations of the viscous rotating shallow-water equations in geopotential variables.
The hyperbolic terms are approximated by a local Lax--Friedrichs flux, while viscosity acts on the momentum variables through a penalty law $\mu_e=\sigma h_e^{-\beta}$.
The standard choice $\beta=1$ and the super-penalized choice $\beta=3$ are compared with a symmetric interior-penalty Galerkin reference.
For the diffusion form, we establish consistency, continuity for $\beta\ge 1$, and an exact coercivity identity in the momentum DG seminorm.
Manufactured-solution tests show that super-penalization can recover the expected momentum $L^2$ accuracy, whereas the coupled geopotential variable need not exhibit the same improvement.
Rotating and topography-aware tests further show that the standard scaling generally gives the better accuracy-cost compromise for the explicit implementation considered here.
}

\keywords{rotating shallow-water equations, discontinuous Galerkin method, nonsymmetric interior penalty, super-penalization}

\pacs[MSC Classification]{65M12, 65M15, 65M60, 76M10}

\maketitle

\section{Introduction}

The rotating shallow-water equations (RSWEs) are a standard model for large-scale geophysical flows \cite{Pedlosky2013,Williamson1992} and can develop complex
flow structures such as barotropic instability \cite{Galewsky2004}. 
Their numerical approximation must combine accurate transport, free-surface evolution, Coriolis coupling, and, in the viscous case considered here, 
a parabolic momentum operator. The resulting hyperbolic-parabolic structure provides a useful setting in which to examine not only convergence, 
but also the computational consequences of stabilization parameters.

Discontinuous Galerkin (DG) methods combine elementwise conservation, geometric flexibility, and high-order approximation with local interelement 
coupling \cite{Cockburn1990,ref_article3}. They have therefore been used extensively for shallow-water and related geophysical systems 
\cite{ref_article6,ref_article3,ref_article5,ref_article1}. Once viscous momentum diffusion is included, however, the treatment of the 
elliptic operator becomes a central design choice \cite{Baumann1999,Liu2019}.

Within the interior-penalty family, symmetric (SIPG), incomplete (IIPG), and nonsymmetric (NIPG) variants differ in adjoint consistency, coercivity 
requirements, and attainable $L^2$ accuracy \cite{Baumann1999, Epshteyn2007, Hesthaven2007}. NIPG has the useful property that its diffusion 
form is coercive for every positive penalty prefactor \cite{ref_article7,Houston2005,Riviere2001,Hesthaven2007}. Its lack of adjoint consistency, on the other hand, generally yields a suboptimal $L^2$ 
estimate under the standard $h^{-1}$ scaling. Optimal rates are sometimes observed for odd polynomial degrees on structured meshes, but this behavior 
is not a general mesh-independent guarantee \cite{Larson2004,Houston2002}. Super-penalty techniques strengthen the jump term and can restore optimal 
elliptic $L^2$ approximation under additional assumptions \cite{Chen2006,Gudi2009}.
More recent studies have mainly focused on elliptic 
and diffusion-dominated problems, including porous media flow in fractured media \cite{Liu2026} and convection-diffusion problems with sharp layers 
\cite{Zhang2024}, where the primary focus is on stability and accuracy. However, the accuracy-stiffness trade-off produced by different NIPG penalty 
scalings has received substantially less attention for coupled, time-dependent geophysical flow systems.

Motivated by this, the present work studies a DG discretization of the viscous RSWE in geopotential variables. The hyperbolic part is discretized using a local Lax--Friedrichs flux, while the viscous operator is treated using a NIPG formulation. The study focuses on how the penalty scaling law
\[
\mu_e=\sigma h_e^{-\beta}
\]
influences the performance of the scheme, in particular by comparing the standard choice ($\beta=1$) with the super-penalized scaling ($\beta=3$).
This problem lies at the intersection of three well-established research directions. The first is the DG literature for shallow water and related geophysical flow systems \cite{ref_article6, ref_article3,ref_article5, ref_article1}. The second is the theory and practice of interior-penalty DG discretizations for elliptic and parabolic operators, including SIPG and NIPG formulations \cite{ref_article7,ref_article2,ref_article4}. The third is the broader scientific-computing question of how method parameters, rather than only approximation spaces or flux choices, determine the real operating range of a discretization.
The present work therefore isolates the role of the penalty exponent and investigates its impact on the NIPG scheme. In addition, 
a comparison with the SIPG method is provided as a benchmark reference in convergence tests for rotating shallow water problems.

The main contributions are:
\begin{enumerate}[leftmargin=2em]
\item a conservative DG formulation in geopotential variables with a penalty-scaled NIPG momentum operator;
\item consistency, continuity, and a coercivity identity for the diffusion form, stated explicitly in the momentum DG seminorm;
\item a conditional stability and error framework for the linearized coupled problem that separates proven diffusion properties from assumptions on the hyperbolic linearization;
\item numerical evidence from manufactured, rotating, and topography-aware tests that distinguishes spatial accuracy, interface-jump control, and explicit computational stiffness.
\end{enumerate}

The remainder of the paper is organized as follows. Section~\ref{sec:model} introduces the governing equations. Section~\ref{sec:space} presents the DG formulation and the penalty scaling. Section~\ref{sec:analysis} gives the linearized analytical framework. Section~\ref{sec:time} discusses explicit time integration and the penalty-aware stability restriction. Section~\ref{sec:numerics} presents the numerical experiments, and Section~\ref{sec:conclusion} gives the conclusions.

\section{Viscous rotating shallow-water equations in geopotential variables}
\label{sec:model}

Let $\Omega\subset\mathbb{R}^2$ be a bounded polygonal domain and let $T>0$. The free-surface displacement is denoted by $\eta(\mathbf{x},t)$, 
the prescribed bottom field by $b(\mathbf{x})\in W^{1,\infty}(\Omega)$, and the total depth by
\[
H=\eta+b,\qquad H(\mathbf{x},t)\ge H_{\min}>0.
\]
With the sign convention used in this paper, the viscous RSWE reads
\begin{equation}
\label{eq1}
\begin{aligned}
\partial_t\mathbf{u}+\mathbf{u}\cdot\nabla\mathbf{u}
+f_c\mathbf{k}\times\mathbf{u}+g\nabla\eta
-\frac{\nu_T}{H}\Delta(H\mathbf{u})&=0,\\
\partial_tH+\nabla\cdot(H\mathbf{u})&=0,
\end{aligned}
\qquad \text{in }\Omega\times(0,T],
\end{equation}
where $\mathbf{u}=(u,v)^\top$ is the depth-averaged horizontal velocity in the $x$- and $y$-directions, $f_c$ is the Coriolis parameter, 
$\mathbf{k}$ is the local vertical unit vector, $g$ is gravitational acceleration, and $\nu_T>0$ is the momentum viscosity.
It is also noted that by using vertical integration and the hydrostatic-pressure assumption, the incompressible Navier--Stokes equations can be reduced to the shallow water system \cite{Aizinger2002}.

The numerical experiments use periodic boundaries. For the general boundary notation below, a compatible exterior state
\[
\mathbf{q}_{\mathrm D}=(\phi_{\eta,\mathrm D},U_{\mathrm D},V_{\mathrm D})^\top
\]
is prescribed on $\Gamma_{\mathrm D}$. Its components correspond to the prescribed velocity and depth through $\phi_{\eta,\mathrm{D}}=g (H_{\mathrm{D}}-b)$, $U_{\mathrm D}=gH_{\mathrm D}u_{\mathrm D}$ and $V_{\mathrm D}=gH_{\mathrm D}v_{\mathrm D}$. Only its momentum components enter the viscous Dirichlet data, whereas the full state is used by the hyperbolic numerical flux. This distinction avoids applying the nonlinear flux functions to velocity data alone.
The compatible initial data $H_0 \in L^2(\Omega),\ \mathbf{u}_0 \in [L^2(\Omega)]^2$ is also given
\begin{equation*}
\begin{aligned}
 H|_{t=0}=H_0,\qquad
\mathbf{u}|_{t=0}=\mathbf{u}_0 \quad \text{in }\Omega.
\end{aligned}
\end{equation*}

For flux construction and DG discretization it is convenient to recast \eqref{eq1} in conservative geopotential variables. 
Multiplying the momentum equation by $H$ and using the continuity equation gives
\[
H(\partial_t\mathbf{u}+\mathbf{u}\cdot\nabla\mathbf{u})
=\partial_t(H\mathbf{u})+\nabla\cdot(H\mathbf{u}\otimes\mathbf{u}).
\]
Then we obtain
\begin{equation}
\label{eq2}
\begin{aligned}
\partial_t(gH\mathbf{u})+\nabla\cdot(gH\mathbf{u}\otimes\mathbf{u})
+f_c\mathbf{k}\times(gH\mathbf{u})+gH\nabla(g\eta)
-\nu_T\Delta(gH\mathbf{u})&=0,\\
\partial_t(gH)+\nabla\cdot(gH\mathbf{u})&=0.
\end{aligned}
\end{equation}

Introducing
\[
\phi_\eta=g\eta,\qquad \phi_b=gb,\qquad \phi=gH=\phi_\eta+\phi_b,
\qquad U=gHu,\qquad V=gHv,
\]
the conservative state $\mathbf q=(\phi_\eta,U,V)^\top$ satisfies
\begin{equation}
\label{eq3}
\partial_t\mathbf q+\nabla\cdot\mathcal F(\mathbf q)-\Delta S(\mathbf q)=R(\mathbf q),
\end{equation}
with
\[
\mathcal F(\mathbf q)=
\begin{pmatrix}
U & V\\[0.2em]
\dfrac{U^2}{\phi}+\dfrac12(\phi^2-\phi_b^2) & \dfrac{UV}{\phi}\\[0.8em]
\dfrac{UV}{\phi} & \dfrac{V^2}{\phi}+\dfrac12(\phi^2-\phi_b^2)
\end{pmatrix},
\quad
S(\mathbf q)=
\begin{pmatrix}0\\ \nu_TU\\ \nu_TV\end{pmatrix},
\]
\[
R(\mathbf q)=
\begin{pmatrix}
0\\[0.2em]
f_cV+\phi_\eta\,\partial_x\phi_b\\[0.2em]
-f_cU+\phi_\eta\,\partial_y\phi_b
\end{pmatrix}.
\]
This provides the model basis for the penalty-dependent analysis and computations developed below. 

\section{DG discretization and penalty design}
\label{sec:space}

Let $Q_h$ be a shape-regular triangulation of $\Omega$, let $\mathcal E_h$ be its facets, and write $\mathcal E_h=\Gamma_h\cup\Gamma_{\mathrm D}$ for 
interior and Dirichlet facets. Periodic pairs are treated as interior facets. For polynomial degree $k\ge1$, define
\[
V_k=\{v\in L^2(\Omega):v|_K\in\mathbb P_k(K)\ \text{for every }K\in Q_h\},
\qquad \mathbf V_h=V_k^3.
\]
For $s\ge0$, the broken Sobolev norm is denoted by
\[
\|v\|_{H^s(Q_h)}^2=\sum_{K\in Q_h}\|v\|_{H^s(K)}^2.
\]
On an interior facet $e=\partial K^+\cap\partial K^-$, a fixed normal $\mathbf n=(n_x,n_y)^\top$ points from $K^+$ to $K^-$. For scalar or vector traces, averages and jumps are taken componentwise:
\[
\{\!\{z\}\!\}=\tfrac12(z^++z^-),\qquad [[z]]=z^+-z^-.
\]
On $\Gamma_{\mathrm D}$, the interior trace is denoted by $z^+$ and the prescribed exterior trace by $z_{\mathrm D}$.
For a test function $\mathbf w_h$, the exterior trace is zero, so $[[\mathbf w_h]]=\mathbf w_h^+$ and $\{\!\{\nabla\mathbf w_h\}\!\}=\nabla\mathbf w_h^+$ on $\Gamma_{\mathrm D}$.

The semidiscrete problem is: find $\mathbf q_h(t)\in\mathbf V_h$ such that
\begin{equation}
\label{eq4}
(\partial_t\mathbf q_h,\mathbf w_h)_\Omega
+b_h(\mathbf q_h,\mathbf w_h)+a_h(\mathbf q_h,\mathbf w_h)
=L_h(\mathbf q_h,\mathbf w_h),
\qquad\forall\mathbf w_h\in\mathbf V_h.
\end{equation}
Here, $(\cdot, \cdot)_{\Omega}$ denotes the standard $L^2(\Omega)$ inner product.
The source and nonhomogeneous viscous boundary terms are
\begin{equation}
\label{eq5}
\begin{aligned}
L_h(\mathbf q_h,\mathbf w_h)
={}&\sum_{K\in Q_h}\int_K R(\mathbf q_h)\cdot\mathbf w_h\,\mathrm d\mathbf x\\
&+\sum_{e\subset\Gamma_{\mathrm D}}\int_e
\bigl(\nabla\mathbf w_h\,\mathbf n+\mu_e\mathbf w_h\bigr)\cdot S(\mathbf q_{\mathrm D})\,\mathrm ds.
\end{aligned}
\end{equation}
It is noted that for periodic calculations, the second line is actually absent.
The handling of the hyperbolic and momentum terms is described below.

\subsection{Hyperbolic flux}

The hyperbolic form $b_h$ in \eqref{eq4} is
\begin{equation}
\label{eq6}
b_h(\mathbf q_h,\mathbf w_h)
=-\sum_{K\in Q_h}\int_K\mathcal F(\mathbf q_h):\nabla\mathbf w_h\,\mathrm d\mathbf x
+\sum_{e\in\mathcal E_h}\int_e\widehat{\mathcal F}_e\cdot[[\mathbf w_h]]\,\mathrm ds.
\end{equation}
On an interior or periodic facet $e \in \mathcal{E}_h$, we use the local Lax--Friedrichs numerical flux \cite{Toro1992}
\[
\widehat{\mathcal F}_e
=\{\!\{\mathcal F(\mathbf q_h)\}\!\}\mathbf n
+\frac{\tau_e}{2}[[\mathbf q_h]],
\qquad
\tau_e=\max_{\pm}\bigl(|\mathbf u^\pm\cdot\mathbf n|+\sqrt{gH^\pm}\bigr).
\]
This choice provides a robust and standard hyperbolic treatment, which is useful here because it limits the number of moving parts in the penalty comparison.
On $\Gamma_{\mathrm D}$, the same local Lax--Friedrichs formula is used with the pair $(\mathbf q_h^+,\mathbf q_{\mathrm D})$. 

\subsection{NIPG treatment of momentum diffusion}

Since $S(\mathbf q)=\mathbf B\mathbf q$ with $\mathbf B=\operatorname{diag}(0,\nu_T,\nu_T)$, the NIPG form \cite{ref_article2} is
\begin{equation}
\label{eq7}
\begin{aligned}
a_h(\mathbf z,\mathbf w)
={}&\sum_{K\in Q_h}\int_K\nabla S(\mathbf z):\nabla\mathbf w\,\mathrm d\mathbf x
-\sum_{e\in\mathcal E_h}\int_e\{\!\{\nabla S(\mathbf z)\}\!\}\mathbf n\cdot[[\mathbf w]]\,\mathrm ds\\
&+\sum_{e\in\mathcal E_h}\int_e\{\!\{\nabla\mathbf w\}\!\}\mathbf n\cdot[[S(\mathbf z)]]\,\mathrm ds
+\sum_{e\in\mathcal E_h}\int_e\mu_e[[S(\mathbf z)]]\cdot[[\mathbf w]]\,\mathrm ds,
\end{aligned}
\end{equation}
where boundary jumps in the right-hand side use the interior trace; the prescribed trace is accounted for by \eqref{eq5}. The penalty law is
\begin{equation}
\label{eq:mu}
\mu_e=\sigma h_e^{-\beta},\qquad \sigma>0,\quad \beta\ge1,
\end{equation}
where $h_e$ is the facet diameter. The penalty exponent $\beta$ controls the asymptotic mesh scaling, whereas the prefactor $\sigma$ controls the 
stabilization strength on a fixed mesh.
In fact, the parameterization in \eqref{eq:mu} separates two distinct roles of the penalty term. The prefactor $\sigma$ controls the baseline stabilization 
strength, while the exponent $\beta$ determines how the stabilization scales under mesh refinement. In particular, $\sigma$ governs the strength of 
interelement jump penalization on a fixed mesh, whereas $\beta$ dictates how rapidly the associated stiffness increases as $h_e \to 0$, with direct 
implications for the conditioning of the discrete system.
This decomposition is useful both analytically and computationally. At the analytical level, it clarifies how the jump contribution enters continuity and coercivity estimates. At the computational level, it distinguishes between stabilization at a fixed resolution and mesh-dependent growth in the asymptotic regime. The comparison between $\beta=1$ and $\beta=3$ therefore represents a comparison between standard and super-penalized NIPG scaling laws rather than a simple parameter choice.

For the SIPG reference used in the manufactured-solution comparison, the third term in \eqref{eq7} is replaced by
\[
-\sum_{e\in\mathcal E_h}\int_e\{\!\{\nabla\mathbf w\}\!\}\mathbf n\cdot[[S(\mathbf z)]]\,\mathrm ds,
\]
and the penalty parameter $\mu_e$ is chosen as in \cite{ref_article4} while the remaining terms are unchanged.

\subsection{Semi-discrete algebraic form}

To facilitate reproducibility and computational implementation, we express the semi-discrete formulation in algebraic matrix form. The approximate solution $\mathbf{q}_h$ is expanded using local basis functions. Let $\{\varphi_i^K\}_{i=1}^{N_p}$ be a set of scalar basis functions spanning $V_k$, where $N_p=\mathrm{dim}(V_k)$. Each component of $\mathbf{q}_h$ is locally represented as
\begin{equation}
    \left(\mathbf{q}_h\right)_\alpha|_K (\mathbf{x},t)
=
\sum_{i=1}^{N_p} c_{\alpha,i}^K(t)\,\varphi_i^K(\mathbf{x}),
\end{equation}
where $\alpha=1,2,3$ denotes the three components corresponding to $\phi_\eta$, $U$, and $V$, respectively.

For implementation, local coefficient vectors are first defined as
\[
\mathbf{c}_\alpha^K
=
(c_{\alpha,1}^K,\ldots,c_{\alpha,N_p}^K)^T,
\]
and then assembled into global vectors
\[
\mathbf{c}_\alpha
=
((\mathbf{c}_\alpha^{K_1})^T,(\mathbf{c}_\alpha^{K_2})^T,\ldots)^T,
\quad \alpha=1,2,3.
\]
The global degree of freedom (DOF) vector is ordered consistently with the physical variables and written as
\[
\mathbf{q}_{\mathrm{dof}}
=
[\phi_{\eta,\mathrm{dof}},\,U_{\mathrm{dof}},\,V_{\mathrm{dof}}]^T.
\]

Using a Galerkin projection with identical trial and test spaces, the semi-discrete system becomes
\begin{equation}
\frac{\mathrm{d}\mathbf{q}_{\mathrm{dof}}}{\mathrm{d}t}
=
\mathbf{M}^{-1}
\left(
\mathbf{P}_{\mathrm{vol}}
+
\mathbf{P}_{\mathrm{surf}}
+
\mathbf{P}_{\mathrm{src}}
\right),
\end{equation}
where the right-hand side corresponds to the volume, surface, and source contributions derived from $-b_h$, $-a_h$, and $L_h$, respectively.

The mass matrix has a block-diagonal structure,
\begin{equation}\notag
\mathbf{M}
=
\mathrm{diag}(\mathbf{M}_{\phi_\eta},\mathbf{M}_U,\mathbf{M}_V),
\end{equation}
with identical element-wise contributions due to the use of the same scalar basis for all variables,
\begin{equation}\notag
\mathbf{M}_{\phi_\eta},\mathbf{M}_U,\mathbf{M}_V
=
\bigoplus_{K\in Q_h}\mathbf{M}_K.
\end{equation}
The element mass matrix is computed on the reference element $\hat{K}$ using quadrature,
\begin{equation}\notag
\left(\mathbf{M}_K\right)_{ij}
=
|\det(\mathbf{J}_K)|
\sum_{l=1}^{N_q} w_l\,
\hat{\varphi}_i(\hat{\mathbf{x}}_l)\hat{\varphi}_j(\hat{\mathbf{x}}_l).
\end{equation}
This block structure enables local inversion of $\mathbf{M}_K$, leading to efficient and parallelizable time integration.

To illustrate the assembly, consider the volume contribution from $-b_h$. After integration by parts, the elementwise contribution for component $\alpha$ is
\begin{equation}\notag
(\mathbf{P}_{\mathrm{vol},\alpha}^K)_i
=
\int_{\hat{K}}
\mathcal{F}_\alpha(\mathbf{q}_h)\cdot
\left(\mathbf{J}_K^{-T}\hat{\nabla}\hat{\varphi}_i\right)
|\det(\mathbf{J}_K)|
\,\mathrm{d}\hat{\mathbf{x}},
\quad \alpha=1,2,3,
\end{equation}
where affine mappings from reference to physical elements are assumed. The surface and source terms follow analogously from \eqref{eq5} and \eqref{eq7}.

\section{Linearized penalty-dependent analytical framework}
\label{sec:analysis}

To analyze how the penalty design affects the behavior of the discretization, we consider a linearized viscous rotating shallow water model around a smooth reference state and isolate the diffusive contribution via an associated NIPG bilinear form $a_h$. This framework plays a key role in maintaining a hyperbolic-parabolic structure for stability and error analysis.

\subsection{Penalty-dependent DG seminorm and assumptions}

Recall that $V_k$ denotes the broken finite-element space of piecewise polynomials of degree at most $k$ on $Q_h$. 
For a discrete state $\mathbf{q}_h=(\phi_{\eta,h}, U_h, V_h)^T \in \mathbf V_h$, we isolate its momentum components into a momentum vector field
$\mathbf{U}_h = (U_h, V_h)^T$ and define the DG seminorm as follows:
\begin{equation}
\label{eq11}
\|\mathbf{q}_h\|_{\mathrm{DG}}^2
:=
\sum_{K\in Q_h}\|\nabla \mathbf{U}_h\|_{L^2(K)}^2
+
\sum_{e\in\mathcal{E}_h}
\frac{\sigma}{h_e^\beta}
\|[[\mathbf{U}_h ]]\|_{L^2(e)}^2.
\end{equation}
This seminorm will be used to derive penalty-dependent coercivity and continuity estimates for the NIPG bilinear form. The factor $h_e^{-\beta}$ determines how interelement stabilization scales with mesh refinement and plays a central role in continuity, coercivity, and stability estimates.
\begin{remark}
This is a seminorm on the full three-component state because the geopotential component is not diffused. It is a norm on the momentum subspace after imposing the relevant boundary or mean-value constraint.
\end{remark}

\begin{assumption}[Regularity and mesh]
\label{asm:regularity}
The exact solution satisfies $\mathbf{q}\in \left(H^{k+1}(\Omega)\right)^3$ for some $k\ge 1$. The mesh family $Q_h$ is shape-regular and quasi-uniform. The reference depth remains strictly positive,
\[
H_0:=\inf_{\Omega}(\eta_0+b)>0.
\]
The penalty prefactor satisfies $\sigma > 0$.
\end{assumption}

\begin{assumption}[Reference state for linearization]
\label{asm:reference}
The linearization is carried out around a smooth reference state $\mathbf q_{\mathrm{eq}}$ with strictly positive depth and coefficients bounded in $W^{1,\infty}(\Omega)$. 
In balance-oriented settings, this reference state may be chosen to satisfy
\[
f_c\mathbf{k}\times \mathbf{u}_{\mathrm{eq}} + g\nabla \eta_{\mathrm{eq}} = 0,
\]
where $\mathbf{u}_{\mathrm{eq}} = (U_{\mathrm{eq}}/\phi_{\mathrm{eq}}, V_{\mathrm{eq}}/\phi_{\mathrm{eq}})^\top$ and the equilibrium total geopotential is $\phi_{\mathrm{eq}} = \phi_{\eta,\mathrm{eq}} + \phi_b$.
\end{assumption}

To investigate the stability and diffusion properties of the numerical scheme, we analyze the system's behavior under small perturbations. We decompose $\mathbf{q}=\mathbf{q}_{\mathrm{eq}}+\mathbf{q}^\prime$, where $\mathbf{q}_{\mathrm{eq}}$ is a steady smooth reference state and $\mathbf{q}^\prime$ is a perturbation. Substituting into \eqref{eq3} yields the linearized system
\begin{equation}\label{eq12}
\partial_t \mathbf{q}^{\prime}+\partial_x\left(\mathbf{A}_1(\mathbf{q}_{\mathrm{eq}}) \mathbf{q}^{\prime}\right)+\partial_y\left(\mathbf{A}_2(\mathbf{q}_{\mathrm{eq}}) \mathbf{q}^{\prime}\right)-\Delta(\mathbf{B} \mathbf{q}^{\prime}) = \mathbf{C} \mathbf{q}^{\prime}+\mathbf{r}_{\mathrm{eq}} ,
\end{equation}
where
\[
\mathbf A_1=
\begin{pmatrix}
0&1&0\\
\phi_{\mathrm{eq}}-U_{\mathrm{eq}}^2/\phi_{\mathrm{eq}}^2&2U_{\mathrm{eq}}/\phi_{\mathrm{eq}}&0\\
-U_{\mathrm{eq}}V_{\mathrm{eq}}/\phi_{\mathrm{eq}}^2&V_{\mathrm{eq}}/\phi_{\mathrm{eq}}&U_{\mathrm{eq}}/\phi_{\mathrm{eq}}
\end{pmatrix},
\]
\[
\mathbf A_2=
\begin{pmatrix}
0&0&1\\
-U_{\mathrm{eq}}V_{\mathrm{eq}}/\phi_{\mathrm{eq}}^2&V_{\mathrm{eq}}/\phi_{\mathrm{eq}}&U_{\mathrm{eq}}/\phi_{\mathrm{eq}}\\
\phi_{\mathrm{eq}}-V_{\mathrm{eq}}^2/\phi_{\mathrm{eq}}^2&0&2V_{\mathrm{eq}}/\phi_{\mathrm{eq}}
\end{pmatrix},
\quad
\mathbf B=\operatorname{diag}(0,\nu_T,\nu_T),
\]
\[
\mathbf C=
\begin{pmatrix}
0&0&0\\
\partial_x\phi_b&0&f_c\\
\partial_y\phi_b&-f_c&0
\end{pmatrix}.
\]
The reference residual $\mathbf r_{\mathrm{eq}}$ is the full steady residual
\begin{equation}
\label{eq:reference-residual}
\mathbf r_{\mathrm{eq}}
=R(\mathbf q_{\mathrm{eq}})-\nabla\cdot\mathcal F(\mathbf q_{\mathrm{eq}})
+\Delta S(\mathbf q_{\mathrm{eq}}),
\end{equation}
that is,
\[
(\mathbf r_{\mathrm{eq}})_1=-\partial_xU_{\mathrm{eq}}-\partial_yV_{\mathrm{eq}},
\]
\[
\begin{aligned}
(\mathbf r_{\mathrm{eq}})_2={}&f_cV_{\mathrm{eq}}+\phi_{\eta,\mathrm{eq}}\partial_x\phi_b
-\partial_x\left(\frac{U_{\mathrm{eq}}^2}{\phi_{\mathrm{eq}}}
+\frac12(\phi_{\mathrm{eq}}^2-\phi_b^2)\right)
-\partial_y\left(\frac{U_{\mathrm{eq}}V_{\mathrm{eq}}}{\phi_{\mathrm{eq}}}\right)
+\nu_T\Delta U_{\mathrm{eq}},\\
(\mathbf r_{\mathrm{eq}})_3={}&-f_cU_{\mathrm{eq}}+\phi_{\eta,\mathrm{eq}}\partial_y\phi_b
-\partial_x\left(\frac{U_{\mathrm{eq}}V_{\mathrm{eq}}}{\phi_{\mathrm{eq}}}\right)
-\partial_y\left(\frac{V_{\mathrm{eq}}^2}{\phi_{\mathrm{eq}}}
+\frac12(\phi_{\mathrm{eq}}^2-\phi_b^2)\right)
+\nu_T\Delta V_{\mathrm{eq}}.
\end{aligned}
\]
Thus $\mathbf r_{\mathrm{eq}}=0$ whenever the reference state $\mathbf q_{\mathrm{eq}}$ is an exact steady solution.

\subsection{Consistency, continuity, and coercivity}\label{sec:Consistency}

\begin{lemma}[Consistency]
\label{lma:consistency}
Under Assumption~\ref{asm:regularity}, the exact solution satisfies the NIPG identity associated with the diffusive operator $-\Delta S(\mathbf q)$.
\end{lemma}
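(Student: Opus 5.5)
The identity to establish is
\[
a_h(\mathbf q,\mathbf w_h)=\sum_{K\in Q_h}\int_K\bigl(-\Delta S(\mathbf q)\bigr)\cdot\mathbf w_h\,\mathrm d\mathbf x+\sum_{e\subset\Gamma_{\mathrm D}}\int_e\bigl(\nabla\mathbf w_h\,\mathbf n+\mu_e\mathbf w_h\bigr)\cdot S(\mathbf q_{\mathrm D})\,\mathrm ds,
\qquad\forall\,\mathbf w_h\in\mathbf V_h,
\]
where the boundary sum is absent in the periodic case. The approach is to insert the exact solution into \eqref{eq7} and integrate the volume term by parts elementwise.

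Under Assumption~\ref{asm:regularity}, $\mathbf q\in(H^{k+1}(\Omega))^3$ with $k\ge1$. Hence $S(\mathbf q)=\mathbf B\mathbf q\in (H^2(\Omega))^3$, and therefore:
\begin{itemize}
\item $S(\mathbf q)$ has single-valued traces, so $[[S(\mathbf q)]]=0$ on every interior (and periodic) facet;
\item $\nabla S(\mathbf q)\in (H^1(\Omega))^{3\times2}$ has well-defined $L^2$ facet traces that agree from both sides, so $\{\!\{\nabla S(\mathbf q)\}\!\}\mathbf n=\nabla S(\mathbf q)\mathbf n$;
\item on $\Gamma_{\mathrm D}$ the exact solution matches the data, so the interior trace of $S(\mathbf q)$ equals $S(\mathbf q_{\mathrm D})$.
\end{itemize}

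\textbf{Volume term.} On each $K$, Green's formula gives
\[
\int_K\nabla S(\mathbf q):\nabla\mathbf w_h\,\mathrm d\mathbf x=-\int_K\Delta S(\mathbf q)\cdot\mathbf w_h\,\mathrm d\mathbf x+\int_{\partial K}\nabla S(\mathbf q)\mathbf n_K\cdot\mathbf w_h\,\mathrm ds .
\]
Summing over $K$ and regrouping each interior facet's two contributions with the fixed normal $\mathbf n$ (pointing from $K^+$ to $K^-$) yields
\[
\sum_{e\in\mathcal E_h}\int_e\nabla S(\mathbf q)\mathbf n\cdot[[\mathbf w_h]]\,\mathrm ds .
\]
By the continuity of the normal flux, this cancels exactly against the second term of \eqref{eq7}. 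On boundary facets the same cancellation holds because $[[\mathbf w_h]]=\mathbf w_h^+$ and the average reduces to the interior trace.

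\textbf{Symmetrization and penalty terms.} On interior facets both vanish because $[[S(\mathbf q)]]=0$. On $\Gamma_{\mathrm D}$, using the interior-trace convention stated after \eqref{eq7}, they equal
\[
\int_e\bigl(\nabla\mathbf w_h\,\mathbf n+\mu_e\mathbf w_h\bigr)\cdot S(\mathbf q_{\mathrm D})\,\mathrm ds,
\]
which is precisely the boundary part of $L_h$ in \eqref{eq5}. This completes the identity.

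\textbf{Main obstacle.} The only delicate step is justifying the facet traces of $\nabla S(\mathbf q)$ and the single-valuedness of its normal component. This is exactly where the $H^2$ regularity, implied by $k\ge1$, is used. The value of $\beta$ and the sign of the third term in \eqref{eq7} play no role, so the same argument covers the SIPG reference form.
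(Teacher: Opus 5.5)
Your proof is correct and follows essentially the same route as the paper's: elementwise integration by parts, single-valuedness of $S(\mathbf q)$ and of $\nabla S(\mathbf q)\mathbf n$ across interior and periodic facets, and matching of the Dirichlet-facet terms with the boundary part of $L_h$ in \eqref{eq5}. It is in fact more careful than the paper's version, because you make explicit that the regrouped facet sum $\sum_{e}\int_e\nabla S(\mathbf q)\mathbf n\cdot[[\mathbf w_h]]\,\mathrm ds$ is exactly cancelled by the second term of \eqref{eq7}, and you identify $H^2$ (not merely $H^1$) regularity as what justifies the flux traces; the only quibble is your closing aside on SIPG, since with Dirichlet facets the $\nabla\mathbf w_h\,\mathbf n$ term in the boundary functional must change sign together with the third term of \eqref{eq7}, so that sign is irrelevant only on interior and periodic facets.
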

\begin{proof}
Let $\mathbf{q} \in (H^{k+1}(\Omega))^3$ be the exact solution. Then $S(\mathbf{q}) \in (H^{1}(\Omega))^3$, and in particular $S(\mathbf{q})$ is single-valued across all interior facets.

Testing the strong form $-\Delta S(\mathbf{q})$ with $\mathbf{w}_h$ and applying integration by parts on each element yields
\[
-\sum_{K \in Q_h}\int_K \Delta S(\mathbf{q})\,\mathbf{w}_h\,\mathrm{d}\mathbf{x}
=
\sum_{K \in Q_h}\int_K \nabla S(\mathbf{q}) \cdot \nabla \mathbf{w}_h\,\mathrm{d}\mathbf{x}
-
\sum_{K \in Q_h}\int_{\partial K} (\nabla S(\mathbf{q})\cdot \mathbf{n})\,\mathbf{w}_h\,\mathrm{d}s.
\]

Summing over all elements, the interior boundary contributions cancel due to the continuity of $S(\mathbf{q})$ and $\nabla S(\mathbf{q})$, i.e.
\[
[[S(\mathbf{q})]] = 0, \quad \text{and} \quad [[\nabla S(\mathbf{q})]] = 0 \quad \text{on } \Gamma_h.
\]

On boundary faces, $S(\mathbf{q}) = S(\mathbf{q}_{\mathrm{D}})$, and the resulting boundary terms coincide exactly with those appearing in $L_h$ defined in \eqref{eq5}. Hence all boundary contributions are consistently incorporated in the discrete formulation.

Finally, since $S(\mathbf{q})$ is continuous across interfaces, the penalty term
\[
\mu_e \sum_{e\in\mathcal{E}_h} \int_e [[S(\mathbf{q})]] \cdot [[\mathbf{w}_h]]\,\mathrm{d}s
\]
vanishes identically on interior faces and does not introduce any inconsistency on boundary faces due to the matching boundary contributions in $L_h$.
This completes the proof.
\end{proof}

\begin{lemma}[Continuity and coercivity]
\label{lma:coercivity}
Under Assumption~\ref{asm:regularity}, and for $\beta\ge 1$, there exists a constant $C_b=C \nu_T \cdot \max \left\{1,\frac{C_k \sqrt{n_0}}{\sqrt{\sigma}}\right\}>0$ independent of $h$ such that
\begin{equation}
\label{eq13}
|a_h(\mathbf{z}_h,\mathbf{w}_h)|
\le C_b\|\mathbf{z}_h\|_{\mathrm{DG}}\|\mathbf{w}_h\|_{\mathrm{DG}},
\quad
\forall \mathbf{z}_h,\mathbf{w}_h\in\mathbf{V}_h.
\end{equation}
For any $\sigma>0$, the coercivity property is unconditionally satisfied
\begin{equation}
\label{eq14}
a_h(\mathbf{z}_h,\mathbf{z}_h)=\nu_T\|\mathbf{z}_h\|_{\mathrm{DG}}^2.
\end{equation}
\end{lemma}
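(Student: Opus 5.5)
Coercivity is the easier half, so I will prove \eqref{eq14} first. Since $S(\mathbf z_h)=\mathbf B\mathbf z_h$ with $\mathbf B=\operatorname{diag}(0,\nu_T,\nu_T)$, only the momentum components enter, and we may write $\mathbf Z_h=(Z_{U},Z_{V})^\top$ for them. Setting $\mathbf w_h=\mathbf z_h$ in \eqref{eq7} gives $\nabla S(\mathbf z_h):\nabla\mathbf z_h=\nu_T|\nabla\mathbf Z_h|^2$. The two consistency/symmetry face terms then read
\[
-\int_e\{\!\{\nabla S(\mathbf z_h)\}\!\}\mathbf n\cdot[[\mathbf z_h]]\,\mathrm ds
+\int_e\{\!\{\nabla\mathbf z_h\}\!\}\mathbf n\cdot[[S(\mathbf z_h)]]\,\mathrm ds .
\]
Because $\mathbf B$ is diagonal and constant, both integrands equal $\nu_T\{\!\{\nabla\mathbf Z_h\}\!\}\mathbf n\cdot[[\mathbf Z_h]]$, so the two terms cancel exactly. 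This is the NIPG skew structure. On $\Gamma_{\mathrm D}$ both terms use the interior trace, so they cancel there as well. The penalty term contributes $\nu_T\sigma h_e^{-\beta}\|[[\mathbf Z_h]]\|_{L^2(e)}^2$. Summing the surviving volume and penalty contributions gives exactly $\nu_T\|\mathbf z_h\|_{\mathrm{DG}}^2$ by \eqref{eq11}, for every $\sigma>0$ and every $\beta$.

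For continuity \eqref{eq13}, I will bound the four terms of \eqref{eq7} separately.

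\textbf{Volume and penalty terms.} The volume term is at most $\nu_T\|\nabla\mathbf Z_h\|\,\|\nabla\mathbf W_h\|$ by Cauchy--Schwarz. The penalty term is at most $\nu_T$ times the product of the weighted jump seminorms, again by Cauchy--Schwarz.

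\textbf{Face-average terms.} These are the substantive part. For a typical one I write
\[
\int_e \nu_T\{\!\{\nabla\mathbf Z_h\}\!\}\mathbf n\cdot[[\mathbf W_h]]\,\mathrm ds
\le \nu_T\bigl(h_e^{\beta/2}\sigma^{-1/2}\|\{\!\{\nabla\mathbf Z_h\}\!\}\|_{L^2(e)}\bigr)\bigl(\sigma^{1/2}h_e^{-\beta/2}\|[[\mathbf W_h]]\|_{L^2(e)}\bigr).
\]
I then use the discrete trace inequality $\|\nabla v\|_{L^2(e)}\le C_k h_e^{-1/2}\|\nabla v\|_{L^2(K)}$ for polynomials, which holds on shape-regular meshes. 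Summing over facets, and letting $n_0$ (the maximal number of facets per element) absorb the multiple counting, gives
\[
\sum_e h_e^{\beta}\|\{\!\{\nabla\mathbf Z_h\}\!\}\|_{L^2(e)}^2\le C_k^2 n_0 \max_e h_e^{\beta-1}\sum_K\|\nabla\mathbf Z_h\|_{L^2(K)}^2 .
\]
This is where $\beta\ge1$ is used: $h_e^{\beta-1}\le \operatorname{diam}(\Omega)^{\beta-1}$, a bound independent of $h$ that I fold into $C$. The face term is therefore bounded by $\nu_T C C_k\sqrt{n_0}\,\sigma^{-1/2}\|\mathbf z_h\|_{\mathrm{DG}}\|\mathbf w_h\|_{\mathrm{DG}}$. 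The other face term is handled identically with the roles of $\mathbf z_h$ and $\mathbf w_h$ exchanged.

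Collecting the four bounds with a discrete Cauchy--Schwarz inequality gives $C_b=C\nu_T\max\{1,C_k\sqrt{n_0}/\sqrt\sigma\}$.

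The main obstacle is this trace-inverse step: I must make sure the weight $h_e^{\beta}$ combines with the $h_e^{-1}$ from the trace inequality into a bounded factor. Bounded here means $h^{\beta-1}\lesssim1$, which needs $\beta\ge1$ and the mesh-size scaling supplied by quasi-uniformity in Assumption~\ref{asm:regularity}. I also need to handle boundary facets, where only one element contributes, consistently with the same constants.
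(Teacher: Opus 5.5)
Your proposal is correct and follows essentially the paper's proof: the same four-term splitting, Cauchy--Schwarz with weights $h_e^{\beta}/\sigma$ and $\sigma/h_e^{\beta}$ on the face-average terms, and a discrete trace inequality in which $\beta\ge1$ absorbs the $h^{-1}$ factor (the paper assumes $h_K\le 1$ where you use $h_e^{\beta-1}\le\operatorname{diam}(\Omega)^{\beta-1}$). Coercivity then comes from the exact cancellation of the two nonsymmetric face terms, and you do not need quasi-uniformity, since your bound $h_e^{\beta-1}\le\operatorname{diam}(\Omega)^{\beta-1}$ holds on any mesh once $\beta\ge1$.
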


\begin{proof}
We first prove continuity. By the definition of the NIPG bilinear form $a_h$ in \eqref{eq7}, the viscous contribution consists of a volume gradient term and two interface terms, together with the penalty stabilization term. For $\mathbf{z}_h=(\zeta_h,m_{1,h},m_{2,h}),\mathbf{w}_h=(\xi_{h},w_{1,h},w_{2,h}) \in \mathbf V_h$, we write
\begin{equation}\notag
\begin{aligned}
a_h(\mathbf{z}_h,\mathbf{w}_h) \triangleq
I_1+I_2+I_3+I_4,
\end{aligned}
\end{equation}
where
\begin{equation}\notag
\begin{aligned}
I_1 &= \sum_{K \in Q_h}\int_K \nabla S(\mathbf{z}_h)\cdot \nabla\mathbf{w}_h\,\mathrm{d} \mathbf{x},
\\
I_2 &= -\sum_{e \in \mathcal{E}_h}\int_{e}\{\nabla S(\mathbf{z}_h)\} \cdot \mathbf{n}[[\mathbf{w}_h]]\,\mathrm{d}s,\\
I_3 &=
 \sum_{e \in \mathcal{E}_h}\int_{e}\{\nabla\mathbf{w}_h\} \cdot \mathbf{n}[[S(\mathbf{z}_h)]]\,\mathrm{d} s \\
 I_4 &= \mu_e \sum_{e \in \mathcal{E}_h} \int_{e}[[S(\mathbf{z}_h)]] \cdot [[\mathbf{w}_h]]\,\mathrm{d}s.
\end{aligned}
\end{equation}

For the gradient term, Cauchy--Schwarz inequality directly gives
\begin{equation}\notag
|I_1|
\le
\left(
\sum_{K\in Q_h}\|\nu_T\nabla (m_{1,h},m_{2,h})\|_{L^2(K)}^2
\right)^{1/2}
\left(
\sum_{K\in Q_h}\|\nabla (w_{1,h},w_{2,h})\|_{L^2(K)}^2
\right)^{1/2}.
\end{equation}
Since $\sigma>0$, this contribution is strictly bounded by $\nu_T\|\mathbf{z}_h\|_{\mathrm{DG}}\|\mathbf{w}_h\|_{\mathrm{DG}}$.

For the interface terms, applying Cauchy--Schwarz inequality on each face yields
\begin{equation}\notag
|I_2|
\le
\left(
\sum_{e\in \mathcal{E}_h}\frac{\nu_T^2h_e^\beta}{\sigma}\|\{\nabla (m_{1,h},m_{2,h})\}\cdot \mathbf{n}\|_{L^2(e)}^2
\right)^{1/2}
\left(
\sum_{e\in \mathcal{E}_h}\frac{\sigma}{h_e^\beta}\|[[ (w_{1,h},w_{2,h})]]\|_{L^2(e)}^2
\right)^{1/2}.
\end{equation}
Here, due to the fact that $h_e \le h_K \triangleq \sup\limits_{p, q \in K}\|p-q\| \le h$, and assuming without loss of generality that $\beta \ge 1$ and $h_K \le 1$, the first factor is controlled through the discrete trace inequality and local mesh regularity:
\begin{equation}\notag
\begin{aligned}
&\sum_{e\in \mathcal{E}_h}\frac{\nu_T^2h_e^\beta}{\sigma}\|\{\nabla (m_{1,h},m_{2,h})\}\cdot \mathbf{n}\|_{L^2(e)}^2\\
\le& \sum_{e \in \mathcal{E}_h} \frac{\nu_T^2h_e^\beta}{\sigma}\sum_{K \in \{K^+,K^-\}}\frac{1}{2} \| \left.\nabla (m_{1,h},m_{2,h})\cdot \mathbf{n} \right|_{K}\|_{L^2(e)}^2  \\
\le& \frac{\nu_T^2C_k^2}{2\sigma}\sum_{e \in \mathcal{E}_h} h_e^\beta (h_{K^+}^{-1} + h_{K^-}^{-1}) \sum_{K \in \{K^+,K^-\}} \|\nabla (m_{1,h},m_{2,h})\|_{L^2(K)}^2 \\
\le& 
\frac{\nu_T^2C_k^2 n_0}{\sigma} \sum_{K\in Q_h} \|\nabla (m_{1,h},m_{2,h})\|_{L^2(K)}^2,
\end{aligned}
\end{equation}
where $n_0$ denotes the maximum number of neighbors an element can have, and the constant $C_k$ derived from the trace inequality is independent of $h_K$ and $\mathbf{z}_h$, but depends on the polynomial degree $k$. Then, the term $|I_2|$ is bounded by $\frac{\nu_T C_k \sqrt{n_0}}{\sqrt{\sigma}}\|\mathbf{z}_h\|_{\mathrm{DG}}\|\mathbf{w}_h\|_{\mathrm{DG}}$.

A similar argument based on the trace inequality yields the same bound for $I_3$. Finally, evaluating the penalty term $I_4$ with $\mu_e = \sigma/h_e^\beta$ satisfies
\[
|I_4|
\le \nu_T
\left(
\sum_{e\in \mathcal{E}_h}\frac{\sigma}{h_e^\beta}
\|[[(m_{1,h},m_{2,h})]]\|_{L^2(e)}^2
\right)^{1/2}
\left(
\sum_{e \in \mathcal{E}_h}\frac{\sigma}{h_e^\beta}
\|[[(w_{1,h},w_{2,h})]]\|_{L^2(e)}^2
\right)^{1/2}.
\]
Collecting these bounds proves the continuity estimate \eqref{eq13}, with the combined constant $C_b=C \nu_T \cdot \max \left\{1,\frac{C_k \sqrt{n_0}}{\sqrt{\sigma}}\right\}$.

Setting $\mathbf w_h=\mathbf z_h$ makes the two nonsymmetric consistency terms cancel exactly, leaving the volume and penalty parts in \eqref{eq14},
therefore leading to the coercivity.
\end{proof}

\subsection{Semidiscrete stability}

\begin{theorem}[Semidiscrete stability estimate]
\label{th:stability}
Under Assumptions~\ref{asm:regularity}, ~\ref{asm:reference} and using Lemma~\ref{lma:coercivity}, when $\beta \ge 1$, the linearized semidiscrete solution 
satisfies
\begin{equation}
\label{eq:stability}
\|\mathbf{q}_h^{\prime}(T)\|_{L^2(\Omega)}^2
+
\nu_T\int_0^T\|\mathbf{q}_h^{\prime}\|_{\mathrm{DG}}^2 \mathrm{d}t \le
c_1 \left(\|\mathbf{q}^{\prime}_h(0)\|_{L^2(\Omega)}^2
+\int_0^T\|r(\mathbf{q}_{\mathrm{eq}})\|_{L^2(\Omega)}^2 \mathrm{d}t\right),
\end{equation}
where the constant $c_1$ is independent of the mesh size $h$ but may depend on $T$, the coefficient bounds, the symmetrizer bounds, $f_c$, $\nu_T^{-1}$, and $\sigma$.
\end{theorem}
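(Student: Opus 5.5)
We will run a weighted energy argument on the linearized semidiscrete problem. The scheme is \eqref{eq4} with $b_h$ replaced by its linearization around $\mathbf q_{\mathrm{eq}}$: fluxes $\mathbf A_1\mathbf q',\mathbf A_2\mathbf q'$ with a Lax--Friedrichs jump term $\tfrac{\tau_e}{2}[[\mathbf q_h']]$, $\tau_e$ frozen at the reference state. Test with $\mathbf w_h=\mathbf P\mathbf q_h'$ where possible, $\mathbf P$ being a symmetrizer of the linearized hyperbolic system. The hypothesis we make on the hyperbolic linearization is the one the theorem allows through ``the symmetrizer bounds''. We assume there is a uniformly positive definite, bounded matrix field $\mathbf P(\mathbf q_{\mathrm{eq}})\in W^{1,\infty}$ with $p_*|\xi|^2\le \xi^\top\mathbf P\xi\le p^*|\xi|^2$ and $\mathbf P\mathbf A_i$ symmetric. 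For the shallow-water Jacobians such a field exists; we take the entropy Hessian at $\mathbf q_{\mathrm{eq}}$, or simply the energy weighting. We further assume it can be chosen block-diagonal with a scalar multiple of the identity on the momentum block, so that it commutes with $\mathbf B$. If that is too restrictive, we instead use the plain $L^2$ test $\mathbf w_h=\mathbf q_h'$ and absorb the nonsymmetric hyperbolic part by the assumed G\aa rding-type bound below. In either case the weighted norm $\|\mathbf q\|_{\mathbf P}^2=(\mathbf P\mathbf q,\mathbf q)_\Omega$ is equivalent to $\|\cdot\|_{L^2(\Omega)}^2$ with $h$-independent constants.

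The key steps are as follows. First, testing gives $\tfrac12\tfrac{\mathrm d}{\mathrm dt}\|\mathbf q_h'\|_{\mathbf P}^2$ on the left. For the diffusion part, Lemma~\ref{lma:coercivity}, identity \eqref{eq14}, yields $a_h(\mathbf q_h',\mathbf q_h')=\nu_T\|\mathbf q_h'\|_{\mathrm{DG}}^2$ exactly and for every $\sigma>0$. This is where the NIPG structure is used: no penalty threshold is needed, and $\beta\ge1$ enters only through the continuity bound \eqref{eq13}. If the weight $\mathbf P$ is used in the momentum block, the commutator terms produced by $\nabla\mathbf P$ are bounded by $\|\mathbf P\|_{W^{1,\infty}}\|\mathbf q_h'\|_{\mathrm{DG}}\|\mathbf q_h'\|_{L^2}$. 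The associated face terms are handled with the trace inequality exactly as in the proof of \eqref{eq13}, i.e.\ with $h_e^{\beta}h_K^{-1}\le 1$ for $\beta\ge1$. We then use Young's inequality, $\le \tfrac{\nu_T}{4}\|\mathbf q_h'\|_{\mathrm{DG}}^2+C\nu_T^{-1}\|\mathbf q_h'\|_{L^2}^2$. Second, for the linearized hyperbolic form we integrate by parts elementwise and use the symmetry of $\mathbf P\mathbf A_i$. The volume terms collapse to $-\tfrac12\sum_K\int_K(\partial_x(\mathbf P\mathbf A_1)+\partial_y(\mathbf P\mathbf A_2))\mathbf q_h'\cdot\mathbf q_h'$, which is bounded by $C\|\mathbf q_h'\|_{L^2}^2$ through the $W^{1,\infty}$ coefficient bounds. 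The interface terms combine into $\tfrac12\sum_e\int_e \tau_e\,\mathbf P[[\mathbf q_h']]\cdot[[\mathbf q_h']]$ minus a contribution from the jump of the reference coefficients, which vanishes since they are smooth. The net effect is dissipative provided $\tau_e\ge\rho(\mathbf A\cdot\mathbf n)$, which holds by the definition of $\tau_e$ at the frozen state. Third, the source terms are bounded by $|(\mathbf P\mathbf C\mathbf q_h',\mathbf q_h')|\le C(|f_c|+\|\nabla\phi_b\|_\infty)\|\mathbf q_h'\|_{L^2}^2$, where the Coriolis part is in fact skew in the momentum block. The residual is bounded by $|(\mathbf P\mathbf r_{\mathrm{eq}},\mathbf q_h')|\le \tfrac12\|\mathbf r_{\mathrm{eq}}\|^2+C\|\mathbf q_h'\|^2$.

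Collecting terms gives
\[
\frac{\mathrm d}{\mathrm dt}\|\mathbf q_h'\|_{\mathbf P}^2+\nu_T\|\mathbf q_h'\|_{\mathrm{DG}}^2\le C_*\|\mathbf q_h'\|_{\mathbf P}^2+C\|\mathbf r(\mathbf q_{\mathrm{eq}})\|_{L^2}^2 .
\]
Here $C_*$ depends on the coefficient bounds, $f_c$, $\nu_T^{-1}$ and $p_*,p^*$, and through the trace constant possibly on $\sigma$, but not on $h$. Gronwall's lemma on $[0,T]$ and the norm equivalence then give \eqref{eq:stability} with $c_1\sim e^{C_*T}\,p^*/p_*$.

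The main obstacle is the second step, in which the symmetrizer must be reconciled with the DG upwinding and with the NIPG diffusion. The LF penalty $\tau_e[[\cdot]]$ is dissipative in the $\mathbf P$-weighted inner product only if $\mathbf P$ is (nearly) continuous across faces and commutes suitably with the flux. Moreover, testing the diffusion with $\mathbf P\mathbf q_h'$ destroys the exact identity \eqref{eq14} unless the momentum block of $\mathbf P$ is scalar. The first attempt will be to use the $L^2$ projection of $\mathbf P\mathbf q_h'$ as the test function and control the projection error by superapproximation, $\|\mathbf P\mathbf q_h'-\Pi_h(\mathbf P\mathbf q_h')\|\lesssim h\|\mathbf P\|_{W^{1,\infty}}\|\mathbf q_h'\|$, combined with inverse estimates. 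Only the $h^{-\beta}$ penalty contribution of the projection error needs care. If it cannot be balanced for $\beta=3$, we will simply state the hyperbolic energy bound as the assumption ``on the hyperbolic linearization'' announced in the introduction and keep the diffusion part exact.
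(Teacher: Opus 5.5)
Your proposal has a genuine gap at the step you yourself call the main obstacle. The fallback you propose, postulating the hyperbolic energy inequality (the ``G{\aa}rding-type bound below'', which is never stated), assumes the only nontrivial part of the theorem.

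The symmetrizer route cannot close under Assumption~\ref{asm:reference}, which allows $\mathbf u_{\mathrm{eq}}\neq0$ (e.g.\ geostrophic states).
\begin{itemize}
\item If $\mathbf P=\mathrm{diag}(\alpha,\mathbf M)$ is block-diagonal, symmetry of the momentum blocks of $\mathbf P\mathbf A_1$ and $\mathbf P\mathbf A_2$ reduces to $\mathbf M\mathbf u_{\mathrm{eq}}=0$. So $\mathbf M$ cannot be positive definite unless $\mathbf u_{\mathrm{eq}}=0$.
\item Any genuine symmetrizer, such as the entropy Hessian, therefore couples $\phi_\eta$ to the momentum. The momentum components of $\mathbf P\mathbf q_h'$ then contain $\phi_{\eta,h}'$, and $a_h(\mathbf q_h',\mathbf P\mathbf q_h')$ produces terms $\nu_T\nabla U_h'\cdot\nabla(p\,\phi_{\eta,h}')$ and $\nu_T\mu_e[[U_h']]\,[[p\,\phi_{\eta,h}']]$.
\item Nothing in the estimate controls $\nabla\phi_{\eta,h}'$ or $h_e^{-\beta}$-weighted jumps of $\phi_{\eta,h}'$.
\item Even for $\mathbf u_{\mathrm{eq}}=0$, a non-constant momentum weight such as $1/\phi_{\mathrm{eq}}$ leaves an $h^{1-\beta}$ factor from the projection error in the penalty term, as you suspected.
\end{itemize}

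The missing idea is that the viscosity itself absorbs the nonsymmetric part. This is the paper's route; its proof uses no symmetrizer at all, despite the wording of the statement.
\begin{itemize}
\item Test with $\mathbf w_h=\mathbf q_h'$, so \eqref{eq14} holds exactly.
\item In the linearized $-b_h(\mathbf q_h',\mathbf q_h')$, the momentum rows give volume terms that pair $\nabla U_h'$ or $\nabla V_h'$ with $L^2$ quantities.
\item The mass row gives only $\int_K U_h'\,\partial_x\phi_{\eta,h}'$ and $\int_K V_h'\,\partial_y\phi_{\eta,h}'$, with constant coefficient $1$. Integrating these by parts elementwise moves the derivative onto $U_h'$ and $V_h'$.
\item The resulting face contribution $\{\!\{U_h'\}\!\}[[\phi_{\eta,h}']]n_x$ cancels against the central flux. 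What remains are face terms $\{\!\{z\}\!\}[[U_h']]$ and $\{\!\{z\}\!\}[[V_h']]$, with $z$ a component of $\mathbf q_h'$.
\end{itemize}
With the discrete trace inequality $\|z_K\|_{L^2(e)}^2\le C_k h_K^{-1}\|z\|_{L^2(K)}^2$ for the trace from $K$, and the penalty part of the DG seminorm, these are bounded by
\[
\|z_K\|_{L^2(e)}\,\|[[U_h']]\|_{L^2(e)}
\le\Bigl(\frac{C_k\,h_e^{\beta}}{\sigma\,h_K}\Bigr)^{1/2}\|z\|_{L^2(K)}\,
\Bigl(\frac{\sigma}{h_e^{\beta}}\Bigr)^{1/2}\|[[U_h']]\|_{L^2(e)} .
\]
The bound $h_e^{\beta}h_K^{-1}\le1$ is exactly where $\beta\ge1$ is needed. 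So $\beta\ge 1$ does not enter only through \eqref{eq13}, contrary to your claim.

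Young's inequality with weight proportional to $\nu_T$ then gives
\[
-b_h(\mathbf q_h',\mathbf q_h')\le C_{\mathrm{hy}}\|\mathbf q_h'\|_{L^2(\Omega)}^2+\tfrac{\nu_T}{2}\|\mathbf q_h'\|_{\mathrm{DG}}^2-\tfrac{\tau_e}{2}\sum_e\|[[\mathbf q_h']]\|_{L^2(e)}^2 .
\]
Here $C_{\mathrm{hy}}$ depends on $\nu_T^{-1}$ and $\sigma^{-1}$, which is where those dependences in $c_1$ come from. The Lax--Friedrichs term needs only $\tau_e\ge0$, not $\tau_e\ge\rho(\mathbf A\cdot\mathbf n)$. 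Your source, residual and Gr\"onwall steps then go through as written, with no projection or superapproximation argument for any $\beta\ge1$.
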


\begin{proof}
To derive this estimate, we test the semi-discrete system \eqref{eq12} with $\mathbf{w}_h=\mathbf{q}_h^\prime$. For the time-derivative term, we directly get
\[
\sum_{K\in Q_h}\int_K \mathbf{q}_h^{\prime} \cdot \partial_t \mathbf{q}_h^{\prime} \,\mathrm{d} \mathbf{x}
=
\frac{1}{2}\frac{\mathrm{d}}{\mathrm{d}t}\|\mathbf{q}^{\prime}_h(t)\|_{L^2(\Omega)}^2.
\]

The hyperbolic contribution is handled using the local Lax--Friedrichs flux. For the $x$-directional flux, substituting the numerical flux definition yields
\begin{equation}\notag
\begin{aligned}
&\sum_{K \in Q_h} \int_K (\partial_x\mathbf{q}_h^{\prime}) \cdot (\mathbf{A}_1\mathbf{q}_h^{\prime}) \, \mathrm{d} \mathbf{x} -\sum_{e \in \mathcal{E}_h} \int_e \widehat{\mathcal{F}}\left(\mathbf{q}_h^{\prime},n_x\right) \cdot [[\mathbf{q}_h^{\prime}]] \, \mathrm{d} s \\
=& \sum_{K \in Q_h} \int_K (\partial_x \mathbf{q}_h^{\prime}) \cdot (\mathbf{A}_{1} \mathbf{q}_h^{\prime})\, \mathrm{d} \mathbf{x} -\sum_{e \in \mathcal{E}_h} \int_e  \left( (\mathbf{A}_{1}\{\mathbf{q}_h^{\prime}\})n_x + \frac{\tau^*}{2} [[\mathbf{q}_h^{\prime}]]\right)\cdot [[\mathbf{q}_h^{\prime}]] \, \mathrm{d} s.
\end{aligned}
\end{equation}

Denote $\mathbf{A}_1=(a_{ij})$ and $\mathbf{A}_2=(\tilde{a}_{ij})$. The $x$-directional contributions involving $\mathbf{A}_1$ are estimated using the Cauchy--Schwarz, Young’s, and trace inequalities; the $y$-directional terms follow similarly.
\begin{equation*}
\begin{aligned}
&\sum_{K \in Q_h} \int_K  (\mathbf{A}_{1} \mathbf{q}_h^{\prime}) \cdot \partial_x \mathbf{q}_h^{\prime} \mathrm{d} \mathbf{x}  -\sum_{e \in \mathcal{E}_h} \int_e  \left( n_x \mathbf{A}_{1}\left\{\mathbf{q}_h^{\prime}\right\} + \frac{\tau^*}{2} [[\mathbf{q}_h^{\prime}]]\right) \cdot [[\mathbf{q}_h^{\prime}]]  \mathrm{d} s\\
=&\sum_{K \in Q_h} \int_K \left(\partial_x \phi_{\eta,h}^{\prime} a_{12} U_h^{\prime} +  \partial_x U_h^{\prime} \left( a_{21} \phi_{\eta,h}^{\prime} + a_{22} U_h^{\prime}\right)  + \partial_x V_h^{\prime} \left( a_{31} \phi_{\eta,h}^{\prime} + a_{32} U_h^{\prime} + a_{33} V_h^{\prime}\right) \right) \mathrm{d} \mathbf{x} \\
&- \sum_{e \in \mathcal{E}_h} \int_e \bigg( a_{12} \{U_h^{\prime}\}n_x \, [[\phi_{\eta,h}^{\prime}]] + \left(  a_{21}\{\phi_{\eta,h}^{\prime}\} + a_{22}\{U_h^{\prime}\} \right)n_x \, [[U_{h}^{\prime}]] \\
&+ \left(  a_{31}\{\phi_{\eta,h}^{\prime}\} + a_{32}\{U_h^{\prime}\} + a_{33}\{V_h^{\prime}\} \right) n_x \, [[V_{h}^{\prime}]] \bigg)\mathrm{d} s  \\
=&\sum_{e \in \mathcal{E}_h} \int_e a_{12} \{\phi_{\eta,h}^{\prime}\}n_x \, [[U_h^{\prime}]] \mathrm{d} \mathbf{s} + \sum_{K \in Q_h} \int_K  \bigg( - \partial_x a_{12} \phi_{\eta,h}^{\prime} \cdot U^{\prime}_h + \partial_x U_h^{\prime} \left((a_{21}-a_{12})\phi_{\eta,h}^{\prime} + a_{22}\cdot U_h^{\prime} \right) \\
&+ \partial_x V_h^{\prime} \left( a_{31} \phi_{\eta,h}^{\prime} + a_{32} U_h^{\prime} + a_{33} V_h^{\prime}\right) \bigg) \mathrm{d} \mathbf{x} - \sum_{e \in \mathcal{E}_h} \int_e \bigg( \left(  a_{21}\{\phi_{\eta,h}^{\prime}\} + a_{22}\{U_h^{\prime}\} \right)n_x \, [[U_{h}^{\prime}]]  \\
&+ \left(  a_{31}\{\phi_{\eta,h}^{\prime}\} + a_{32}\{U_h^{\prime}\} + a_{33}\{V_h^{\prime}\} \right)n_x \, [[V_{h}^{\prime}]] \bigg)\mathrm{d} s \\
\le&
\left( 3\frac{C_kC_a \epsilon}{\sigma}+2C_a \epsilon + C_a^{\prime} \epsilon\right) \|\phi_{\eta,h}^{\prime}\|_{L^2(\Omega)}^2
+ \left(  2C_a \epsilon + C_a^{\prime}C_{\epsilon} + \frac{C_k C_a\epsilon}{\sigma} \right) \|U_{h}^{\prime}\|_{L^2(\Omega)}^2  \\
&+\left( C_a \epsilon + \frac{C_k C_a\epsilon}{\sigma} \right) \|V_{h}^{\prime}\|_{L^2(\Omega)}^2 + 2C_a C_\epsilon \|\mathbf{q}_h^{\prime}\|_{\mathrm{DG}}^2 -
\frac{\tau^*}{2}\sum_{e \in \mathcal{E}_h}
\left\| [[\mathbf{q}_h^{\prime}]] \right\|^2_{L^2(e)},
\end{aligned}
\end{equation*}
where $C_{a}$ and $C_a^{\prime}$ denote the uniform bounds of $a_{ij}$ (or $\tilde{a}_{ij}$) and their spatial derivatives $\partial_x a_{ij}$ (or $\partial_y \tilde{a}_{ij})$ over $\Omega$, respectively. Summing the estimates in both spatial directions and choosing a suitable parameter $\epsilon>0$, the gradient terms can be absorbed, yielding
\begin{equation}\notag
\begin{aligned}
    \sum_{K \in Q_h} & \int_K \partial_x\mathbf{q}_h^{\prime} \cdot (\mathbf{A}_1\mathbf{q}_h^{\prime}) \mathrm{d} \mathbf{x} -\sum_{e \in \mathcal{E}_h} \int_e  [[\mathbf{q}_h^\prime]]\cdot \widehat{\mathcal{F}}\left(\mathbf{q}_h^{\prime},n_x\right) \mathrm{d} s \\
                     &\le C_{\mathrm{hy}} \|\mathbf{q}_{h}^{\prime}\|_{L^2(\Omega)}^2 + \frac{\nu_T}{2} \|\mathbf{q}_h^{\prime}\|_{\mathrm{DG}}^2  -
\frac{\tau^*}{2}\sum_{e \in \mathcal{E}_h}
\left\| [[\mathbf{q}_h^{\prime}]] \right\|^2_{L^2(e)}.
\end{aligned}
\end{equation}

The viscous contribution is controlled by Lemma~\ref{lma:coercivity}. In particular, the coercivity property implies
\[
a_h(\mathbf{q}_h^{\prime},\mathbf{q}_h^{\prime})= \nu_T\|\mathbf{q}_h^{\prime}\|_{\mathrm{DG}}^2,
\]
where only the momentum components $(U_h,V_h)$ enter the DG--type norm defined in \eqref{eq11}. This highlights that the penalty-dependent coercivity is fully embedded in this estimate.

Based on Assumption~\ref{asm:reference}, since both $\mathbf{q}$ and $\mathbf{q}_{\mathrm{eq}}$ satisfy identical physical boundary conditions on $\Gamma_{\mathrm{D}}$, the perturbation $\mathbf{q}^\prime$ satisfies homogeneous boundary conditions. The linear reaction term $\mathbf{C}\mathbf{q}_h^\prime$ in \eqref{eq12} is bounded using the uniform boundedness of $\mathbf{C}$ (whose entries $\partial_x\phi_b$, $\partial_y\phi_b$, and $f_c$ lie in $L^\infty(\Omega)$ by Assumption~\ref{asm:reference}),
\begin{equation}\notag
\sum_{K \in Q_h}\int_{K} \mathbf{q}_h^{\prime}\cdot(\mathbf{C}\mathbf{q}_h^\prime)\,\mathrm{d}\mathbf{x} \le C_f \|\mathbf{q}_h^\prime\|_{L^2(\Omega)}^2, \qquad C_f := \|\mathbf{C}\|_{L^\infty(\Omega)}.
\end{equation}
The residual forcing $\mathbf{r}(\mathbf{q}_{\mathrm{eq}})$ is controlled by Cauchy--Schwarz and Young's inequality,
\begin{equation}\notag
\sum_{K \in Q_h}\int_{K} \mathbf{q}_h^{\prime}  \cdot \mathbf{r}(\mathbf{q}_{\mathrm{eq}}) \,\mathrm{d} \mathbf{x} \leq \left\|\mathbf{q}_h^{\prime}\right\|_{L^2(\Omega)}\left\|\mathbf{r}(\mathbf{q}_{\mathrm{eq}})\right\|_{L^2(\Omega)} \leq \frac{1}{2} \left\|\mathbf{q}_h^{\prime}\right\|_{L^2(\Omega)}^2+\frac{1}{2}\left\|\mathbf{r}(\mathbf{q}_{\mathrm{eq}})\right\|_{L^2(\Omega)}^2.
\end{equation}

Combining the bounds above yields
\begin{equation}\notag
\begin{aligned} 
\frac{d}{dt}\|\mathbf{q}_h^{\prime}\|_{L^2(\Omega)}^2
+
\nu_T\|\mathbf{q}_h^{\prime}\|_{\mathrm{DG}}^2 &\le
2(C_{\mathrm{hy}}+C_f + \frac{1}{2})\|\mathbf{q}^{\prime}_h\|_{L^2(\Omega)}^2
+\|r(\mathbf{q}_{\mathrm{eq}})\|_{L^2(\Omega)}^2.
\end{aligned}
\end{equation}
Applying Grönwall’s inequality then completes the proof.
\end{proof}

\subsection{Penalty-dependent priori error estimate}
We now derive a priori error estimate for the NIPG formulation.
Let $P_h$ be the scalar $L^2$ projection for the geopotential and let $R_h$ be the NIPG elliptic projection for the momentum, defined by
\[
a_h\bigl((0,\mathbf m-R_h\mathbf m),(0,\mathbf v_h)\bigr)=0
\qquad\forall\mathbf v_h\in V_k^2,
\]
with compatible projected boundary data when $\Gamma_{\mathrm D}\ne\varnothing$. Set $\Pi_h\mathbf q'=(P_h\phi_\eta',R_h\mathbf m')$.

\begin{theorem}[A priori error estimate]
\label{th:error}
Under the assumptions of Theorem~\ref{th:stability}, if $\beta \ge 1$, the DG-NIPG approximation satisfies an estimate of the form
\begin{equation}
\label{error}
\begin{aligned}
\left(\int_0^T\|\mathbf{q}^\prime-\mathbf{q}_h^{\prime}\|_{\mathrm{DG}}^2 \mathrm{d} t\right)^{1/2}  &\le Ch^{k}\left(\left\|\mathbf{q}^\prime\right\|_{H^1(0,T;H^{k+1}(Q_h))} +
\left\|\mathbf{q}^\prime  \right\|_{L^2(0,T;H^{k+1}(Q_h))} \right),\\
\|(U^{\prime},V^\prime)-(U_h^{\prime},V_h^\prime)\|_{L^\infty(0,T;L^2(\Omega))}  &\le Ch^{(k+1)-\delta}\left(\left\|\mathbf{q}^\prime\right\|_{H^1(0,T;H^{k+1}(Q_h))} +
\left\|\mathbf{q}^\prime  \right\|_{L^2(0,T;H^{k+1}(Q_h))} \right),\\
\|\phi_\eta^\prime-\phi_{\eta,h}^\prime\|_{L^\infty(0,T;L^2(\Omega))}  &\le Ch^{k+1-\delta}\left\| \mathbf{q}^\prime\right\|_{H^1(0,T;H^{k+1}(Q_h))} +Ch^{k}
\left\|\mathbf{q}^\prime  \right\|_{L^2(0,T;H^{k+1}(Q_h))},
\end{aligned}
\end{equation}
where the constant $C$ is independent of the mesh size $h$ and $\delta=0$ for NIPG if $\beta \geq 3$, if the mesh consists only of triangles and tetrahedra, and if $g_{\mathrm{D}} \in V_k^2$. Otherwise, $\delta=1$.
\end{theorem}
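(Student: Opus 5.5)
The proof follows the standard projection–splitting argument. Write the error as
\[
\mathbf q'-\mathbf q_h'=(\mathbf q'-\Pi_h\mathbf q')+(\Pi_h\mathbf q'-\mathbf q_h')=:\boldsymbol\rho+\boldsymbol\theta_h,\qquad \boldsymbol\theta_h\in\mathbf V_h .
\]
The interpolation part $\boldsymbol\rho=(\phi_\eta'-P_h\phi_\eta',\ \mathbf m'-R_h\mathbf m')$ is controlled by approximation results. The $L^2$ projection gives $\|\phi_\eta'-P_h\phi_\eta'\|_{L^2}\le Ch^{k+1}|\phi_\eta'|_{H^{k+1}}$. For the NIPG elliptic projection $R_h$, Lemma~\ref{lma:coercivity} applies: coercivity \eqref{eq14} together with continuity \eqref{eq13} extended to $H^{k+1}$ functions via the continuous trace inequality gives a Céa-type estimate $\|\mathbf m'-R_h\mathbf m'\|_{\mathrm{DG}}\le Ch^k|\mathbf m'|_{H^{k+1}}$ for every $\beta\ge1$. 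The penalty contributes only through the interpolant's jumps, weighted by $h_e^{-\beta}h^{2k+1}$, which is harmless on quasi-uniform meshes with $h\le1$ when using a continuous or averaged interpolant. The $L^2$ estimate for $R_h$ is exactly where $\delta$ enters. I will quote the known super-penalty result (Chen; Gudi et al.): for $\beta\ge3$ on simplicial meshes with $g_{\mathrm D}\in V_k^2$, $\|\mathbf m'-R_h\mathbf m'\|_{L^2}\le Ch^{k+1}$. The idea is that a duality argument leaves the adjoint-inconsistency term $\sum_e\int_e\{\!\{\nabla\psi\}\!\}\mathbf n\cdot[[\mathbf m'-R_h\mathbf m']]$, which is of order $h^{\beta/2}\cdot h^k$ times the jump-seminorm and hence becomes $O(h^{k+1})$ once $\beta\ge3$. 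Otherwise only $h^k$ is available, i.e.\ $\delta=1$. The same bounds hold for $\partial_t\boldsymbol\rho$ because the projections commute with $\partial_t$, and this accounts for the $H^1(0,T;H^{k+1})$ norms.

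Next I derive the error equation. By Lemma~\ref{lma:consistency}, together with the consistency of the linearized Lax--Friedrichs flux for continuous $\mathbf q'$, the exact perturbation satisfies the semidiscrete linearized scheme. Subtracting gives, for all $\mathbf w_h$,
\[
(\partial_t\boldsymbol\theta_h,\mathbf w_h)+b_h^{\mathrm{lin}}(\boldsymbol\theta_h,\mathbf w_h)+a_h(\boldsymbol\theta_h,\mathbf w_h)-(\mathbf C\boldsymbol\theta_h,\mathbf w_h)
=-(\partial_t\boldsymbol\rho,\mathbf w_h)-b_h^{\mathrm{lin}}(\boldsymbol\rho,\mathbf w_h)+(\mathbf C\boldsymbol\rho,\mathbf w_h).
\]
The $a_h(\boldsymbol\rho,\mathbf w_h)$ term vanishes by the definition of $R_h$, since $a_h$ acts only on momentum. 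I then test with $\mathbf w_h=\boldsymbol\theta_h$ and repeat the energy argument from the proof of Theorem~\ref{th:stability} on the left side: the hyperbolic part is bounded by $C_{\mathrm{hy}}\|\boldsymbol\theta_h\|^2+\tfrac{\nu_T}{2}\|\boldsymbol\theta_h\|_{\mathrm{DG}}^2-\tfrac{\tau^*}{2}\sum_e\|[[\boldsymbol\theta_h]]\|^2$, and \eqref{eq14} supplies the coercive term.

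The main obstacle is the hyperbolic consistency term $b_h^{\mathrm{lin}}(\boldsymbol\rho,\boldsymbol\theta_h)$, because $\rho_\phi$ is an $L^2$ projection error and carries no DG-seminorm control. I would integrate by parts so that derivatives fall on $\boldsymbol\theta_h$, after which $L^2$ orthogonality of $P_h$ kills the leading volume part involving $\rho_\phi$ against constant-coefficient test gradients, up to $W^{1,\infty}$ coefficient perturbations. Face terms are bounded by $\|\boldsymbol\rho\|_{L^2(e)}$ times either the upwind jump, absorbed into $\tfrac{\tau^*}{2}\sum_e\|[[\boldsymbol\theta_h]]\|^2$, or $\|\{\!\{\boldsymbol\theta_h\}\!\}\|$. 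The face traces of $\boldsymbol\rho$ are $O(h^{k+1/2})$, which yields $h^{k+1/2}$ or even $h^{k+1}$ for these pieces. The momentum coupling terms $a_{21}\rho_\phi\,\partial_x\theta_U$ are absorbed into $\tfrac{\nu_T}{4}\|\boldsymbol\theta_h\|_{\mathrm{DG}}^2$ at the cost of $C\nu_T^{-1}\|\rho_\phi\|^2$. The geopotential equation, however, has no diffusion. Its coupling terms such as $a_{12}(\mathbf m'-R_h\mathbf m')\partial_x\theta_\phi$ cannot be absorbed, and since $R_h$ is not $L^2$-orthogonal they are handled by an inverse inequality or by face jumps at one power of $h$ lost. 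This is why the third estimate carries only $h^k$ on the $L^2(0,T;H^{k+1})$ part.

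Finally, Young's inequality and Grönwall's lemma give
\[
\|\boldsymbol\theta_h\|_{L^\infty L^2}^2+\nu_T\int_0^T\|\boldsymbol\theta_h\|_{\mathrm{DG}}^2\le C\bigl(\|\boldsymbol\theta_h(0)\|^2+\|\partial_t\boldsymbol\rho\|_{L^2L^2}^2+\|\boldsymbol\rho\|_{L^2L^2}^2+h^{2k}\|\mathbf q'\|_{L^2H^{k+1}}^2\bigr),
\]
with $\boldsymbol\theta_h(0)=0$ by choosing $\mathbf q_h'(0)=\Pi_h\mathbf q'(0)$. The triangle inequality with the projection bounds then gives the three lines of \eqref{error}. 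The DG line follows at rate $h^k$. The momentum $L^2$ line follows at rate $h^{k+1-\delta}$, inherited from $\|\mathbf m'-R_h\mathbf m'\|_{L^2}$. The geopotential line combines the $h^{k+1-\delta}$ time-derivative contribution with the $h^k$ hyperbolic coupling loss.
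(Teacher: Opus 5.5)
Your skeleton ($L^2$ projection for $\phi_\eta'$, NIPG elliptic projection $R_h$ for the momentum, energy estimate, Gr\"onwall, triangle inequality, and $\delta$ imported from the super-penalty $L^2$ bound for $R_h$) is the paper's. However, your last step does not deliver the momentum line of \eqref{error} when $\delta=0$. Your Gr\"onwall inequality bounds the whole vector $\boldsymbol\theta_h$, including $\boldsymbol\theta_{U,V}$. Its right-hand side contains $h^{2k}\|\mathbf q'\|_{L^2(0,T;H^{k+1}(Q_h))}^2$ from the geopotential coupling. The triangle inequality therefore only gives
\[
\|(U',V')-(U_h',V_h')\|_{L^\infty(0,T;L^2(\Omega))}\le Ch^{k+1-\delta}\|\mathbf q'\|_{H^1(0,T;H^{k+1}(Q_h))}+Ch^{k}\|\mathbf q'\|_{L^2(0,T;H^{k+1}(Q_h))},
\]
i.e.\ $O(h^k)$ even for $\beta\ge 3$. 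The $O(h^{k+1})$ bound on $\mathbf m'-R_h\mathbf m'$ controls $\boldsymbol\rho_{U,V}$, not $\boldsymbol\theta_{U,V}$, so ``inherited from $R_h$'' does not follow. A coupled estimate cannot avoid this loss either. The momentum rows of $b_h(\boldsymbol\theta_h,\boldsymbol\theta_h)$ contain the pressure coupling $a_{21}\theta_\phi\,\partial_x\theta_U$, $a_{31}\theta_\phi\,\partial_x\theta_V$ and their face counterparts, and $\mathbf C$ contains $\partial_x\phi_b\,\theta_\phi$. These terms can only be absorbed at the price of $\|\theta_\phi\|^2=O(h^{2k})$.

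The paper gets the second line by a different ordering. It writes an energy inequality for $\boldsymbol\xi_{U,V}$ alone, whose momentum rows keep $\boldsymbol\chi_\phi$ but no $\boldsymbol\xi_\phi$, and closes Gr\"onwall there with an $h^{2(k+1)-2\delta}$ right-hand side. Only afterwards does it test the continuity equation with $\boldsymbol\xi_\phi$, where the $h^k$ term $\|\boldsymbol\chi\|_{\mathrm{DG}}^2$ enters. The missing step in your proof is a justification that the $\theta_\phi$-terms in the momentum equation do not feed $O(h^k)$ back into $\boldsymbol\theta_{U,V}$. The paper simply omits those terms, so you cannot cite it here; you would need a genuinely new ingredient, e.g.\ a negative-norm estimate for $\theta_\phi$. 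Without it, your argument gives the first and third lines, and the second only when $\delta=1$.

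A secondary point concerns the case $\delta=1$. There the traces of $\boldsymbol\rho_{U,V}$ are only $O(h^{k-1/2})$, not $O(h^{k+1/2})$, and your inverse-inequality option for $\rho_U\,\partial_x\theta_\phi$ gives only $O(h^{k-1})$. What works is to integrate the continuity row by parts onto $\boldsymbol\rho_{U,V}$. The terms $\{\!\{\boldsymbol\rho_{U,V}\}\!\}\cdot\mathbf n\,[[\theta_\phi]]$ then cancel, leaving $\nabla\cdot\boldsymbol\rho_{U,V}$, $[[\boldsymbol\rho_{U,V}]]\cdot\mathbf n\,\{\!\{\theta_\phi\}\!\}$ and the upwind term in $[[\rho_\phi]]$. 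The first two are $O(h^k)$ by the DG-seminorm bound on $R_h$ (using $\beta\ge 1$), and the last is $O(h^{k+1/2})$.
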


\begin{proof}
Since $\boldsymbol{\Pi}_h \mathbf{q}^\prime$ denotes the mixed projection of the exact solution $\mathbf{q}^\prime$ onto $\mathbf{V}_h$, we 
decompose $\mathbf q'-\mathbf q_h'=(\mathbf q'-\Pi_h\mathbf q')-(\mathbf q_h'-\Pi_h\mathbf q')=\boldsymbol\chi-\boldsymbol\xi$,
with the scalar components defined as $\phi_\eta^{\prime}-\phi_{\eta,h}^{\prime}=:\boldsymbol{\chi}_\phi-\boldsymbol{\xi}_\phi$, $U^{\prime}-U_{h}^{\prime}=:\boldsymbol{\chi}_U-\boldsymbol{\xi}_U$, and $V^{\prime}-V_{h}^{\prime}=:\boldsymbol{\chi}_V-\boldsymbol{\xi}_V$.
For brevity, we denote the momentum error vectors as $\boldsymbol{\chi}_{U,V} = (\boldsymbol{\chi}_U, \boldsymbol{\chi}_V)^\top$ and $\boldsymbol{\xi}_{U,V} = (\boldsymbol{\xi}_U, \boldsymbol{\xi}_V)^\top$.
Correspondingly, the subscript $(\cdot)_{U,V}$ applied to any flux vector denotes its restriction to the second and third momentum components.

Subtracting the semidiscrete formulation for $\mathbf{q}^{\prime}_h$ from the exact variational identity satisfied by $\mathbf{q}^{\prime}$ yields the error equation:
\begin{equation}\notag
(\partial_t (\mathbf{q}^{\prime}-\mathbf{q}_h^{\prime}), \mathbf{w}_h) + b_h(\mathbf{q}^{\prime}-\mathbf{q}_h^{\prime}, \mathbf{w}_h) + a_h(\mathbf{q}^{\prime}-\mathbf{q}_h^{\prime}, \mathbf{w}_h) = L_h(\mathbf{q}^{\prime}-\mathbf{q}_h^{\prime}, \mathbf{w}_h).
\end{equation}
Substituting $\mathbf{q}^{\prime}-\mathbf{q}_h^{\prime} = \boldsymbol{\chi} - \boldsymbol{\xi}$ and choosing the test function $\mathbf{w}_h = \boldsymbol{\xi} \in \mathbf{V}_h$, we obtain
\begin{equation}\notag
(\partial_t \boldsymbol{\xi}, \boldsymbol{\xi}) + b_h(\boldsymbol{\xi}, \boldsymbol{\xi}) +a_h(\boldsymbol{\xi}, \boldsymbol{\xi}) = (\partial_t \boldsymbol{\chi}, \boldsymbol{\xi}) +  b_h(\boldsymbol{\chi}, \boldsymbol{\xi}) + a_h(\boldsymbol{\chi}, \boldsymbol{\xi}) - L_h(\boldsymbol{\chi} - \boldsymbol{\xi}, \boldsymbol{\xi}).
\end{equation}
Note that $L_h$ does not act on the boundary for the error equation since we assume the projection satisfies the boundary conditions, leading to $L_h(\boldsymbol{\chi}, \boldsymbol{\xi}) = (\mathbf{C}\boldsymbol{\chi}, \boldsymbol{\xi})$.

We begin by analyzing the error equation for the momentum variables. Following the stability analysis in Theorem~\ref{th:stability}, we obtain
\begin{equation}\notag
\begin{aligned}
    \frac{1}{2}&\frac{\mathrm{d}}{\mathrm{d}t}\|\boldsymbol{\xi}_{U,V}\|_{L^2(\Omega)}^2
+ \frac{\nu_T}{2}\|\boldsymbol{\xi}\|_{\mathrm{DG}}^2 -
(C_{\mathrm{hy}}+ C_f + \frac{1}{2})\|\boldsymbol{\xi}_{U,V}\|_{L^2(\Omega)}^2 \\
               &\le (\partial_t \boldsymbol{\chi}_{U,V}, \boldsymbol{\xi}_{U,V}) + b_h(\boldsymbol{\chi}_{U,V}, \boldsymbol{\xi}_{U,V}) + (\mathbf{C}\boldsymbol{\chi}_{U,V}, \boldsymbol{\xi}_{U,V}).
\end{aligned}
\end{equation}

We now bound the term $b_h(\boldsymbol{\chi}_{U,V}, \boldsymbol{\xi}_{U,V})$. After integrating the volume terms by parts and applying Cauchy--Schwarz's and Young’s inequalities on the resulting interface contributions, the jump products $[[\boldsymbol{\chi}]] \cdot [[\boldsymbol{\xi}]]$ are estimated by using a trace inequality for $\boldsymbol{\chi}$, while the DG--type norm is extracted for $\boldsymbol{\xi}$ (under the assumption $\beta\ge 1$). This yields, for the $x$-directional contribution,
\begin{equation}\notag
\begin{aligned}
&b_h^x(\boldsymbol{\chi}_{U,V}, \boldsymbol{\xi}_{U,V})\\
=&\sum_{K \in Q_h} \int_K \partial_x \boldsymbol{\xi}_{U,V} \cdot \left(\mathbf{A}_{1} \boldsymbol{\chi}\right)_{U,V} \mathrm{d} \mathbf{x}  -\sum_{e \in \mathcal{E}_h} \int_e  \left( \mathbf{n}_x \mathbf{A}_{1}\left\{\boldsymbol{\chi}\right\} + \frac{\tau^*}{2} [[\boldsymbol{\chi}]]\right)_{U,V} \cdot [[\boldsymbol{\xi}_{U,V}]]  \mathrm{d} s\\
=&\sum_{K \in Q_h} \int_K  \left( \partial_x \boldsymbol{\xi}_{U} \left( a_{21} \boldsymbol{\chi}_{\phi} + a_{22} \boldsymbol{\chi}_{U}\right)  + \partial_x \boldsymbol{\xi}_{V} \left( a_{31} \boldsymbol{\chi}_{\phi} + a_{32} \boldsymbol{\chi}_{U} + a_{33} \boldsymbol{\chi}_{V}\right)  \right) \mathrm{d} \mathbf{x}  \\
&- \sum_{e \in \mathcal{E}_h} \int_e \bigg( \left(  a_{21}\{\boldsymbol{\chi}_{\phi}\} + a_{22}\{\boldsymbol{\chi}_{U}\} \right)\mathbf{n}_x \, [[\boldsymbol{\xi}_{U}]] + \left(  a_{31}\{\boldsymbol{\chi}_{\phi}\} + a_{32}\{\boldsymbol{\chi}_{U}\} + a_{33}\{\boldsymbol{\chi}_{V}\} \right)\mathbf{n}_x \, [[\boldsymbol{\xi}_{V}]]  \\
&+ \frac{\tau^*}{2} [[\boldsymbol{\chi}_{U,V}]]\cdot [[\boldsymbol{\xi}_{U,V}]] \bigg) \mathrm{d} s\\
\le&
 \left(C_a \epsilon + \tau^*\epsilon \right)\| \boldsymbol{\xi}\|_{\mathrm{DG}}^2 + \left( 2C_a C_\epsilon + \frac{2C_k C_a\epsilon}{\sigma} \right) \|\boldsymbol{\chi}_{\phi}\|_{L^2(\Omega)}^2 + \frac{2C_k C_aC
 _\epsilon}{\sigma}h^2\|\partial_x \boldsymbol{\chi}_{\phi}\|_{L^2(\Omega)}^2  \\
&+ \left( 2C_a C_\epsilon + \frac{2C_k C_aC_\epsilon}{\sigma}+ \frac{2C_k C_\epsilon \tau^*}{\sigma}\right) \|\boldsymbol{\chi}_{U}\|_{L^2(\Omega)}^2+ \left(\frac{2C_k C_aC_\epsilon}{\sigma}+ \frac{2C_kC_\epsilon \tau^*}{\sigma} \right) h^2\|\partial_x \boldsymbol{\chi}_{U}\|_{L^2(\Omega)}^2 \\
& + \left( C_a C_\epsilon + \frac{C_k C_aC_\epsilon}{\sigma}+ \frac{2C_k C_\epsilon \tau^*}{\sigma}\right) \|\boldsymbol{\chi}_{V}\|_{L^2(\Omega)}^2 \left(\frac{C_k C_aC_\epsilon}{\sigma}+ \frac{2C_kC_\epsilon \tau^*}{\sigma} \right) h^2\|\partial_x \boldsymbol{\chi}_{V}\|_{L^2(\Omega)}^2.
\end{aligned}
\end{equation}
Therefore, the corresponding estimate for the $y$-direction follows analogously. By choosing a suitable parameter $\epsilon > 0$, we obtain
\begin{equation}\notag
\begin{aligned}
b_h(\boldsymbol{\chi}_{U,V}, \boldsymbol{\xi}_{U,V})
&\le \frac{\nu_T}{4} \|\boldsymbol{\xi}\|_{\mathrm{DG}}^2  + C_v h^{2}
\left\|\nabla \boldsymbol{\chi} \right\|^2_{L^2(\Omega)} + \bar{C}_v 
\left\|\boldsymbol{\chi} \right\|^2_{L^2(\Omega)},
\end{aligned}
\end{equation}
where the constants $C_v$ and $\bar{C}_v$ depend on the flux Jacobians, trace constants, and penalty parameters.

The source term is bounded by:
\begin{equation}\notag
\begin{aligned}
  \sum_{K \in Q_h} \int_K (\mathbf{C}\boldsymbol{\chi})_{U,V} \cdot \boldsymbol{\xi}_{U,V} \, \mathrm{d} \mathbf{x} \le \frac{C_L}{2}\left\|\boldsymbol{\chi}_{U,V}\right\|_{L^2(\Omega)}^2 + \frac{1}{2}\left\|\boldsymbol{\xi}_{U,V}\right\|_{L^2(\Omega)}^2,
\end{aligned}
\end{equation}
where $C_{L}$ depends on the matrix norm of $\mathbf{C}$.

Combining these bounds and applying Young's inequality to the time derivative term, we obtain
\begin{equation}\notag
\begin{aligned}
\frac{\mathrm{d}}{\mathrm{d}t}\|\boldsymbol{\xi}_{U,V}\|_{L^2(\Omega)}^2
+ \frac{\nu_T}{2}\|\boldsymbol{\xi}\|_{\mathrm{DG}}^2 \le& 
2(C_{\mathrm{hy}}+ C_f + 1)\|\boldsymbol{\xi}_{U,V}\|_{L^2(\Omega)}^2 + \left\|\partial_t \boldsymbol{\chi}_{U,V}\right\|^2_{L^2(\Omega)} \\
&+ 2C_v h^{2}
\left\|\nabla \boldsymbol{\chi} \right\|^2_{L^2(\Omega)} + \left(2\bar{C}_v  + C_L\right)
\left\|\boldsymbol{\chi} \right\|^2_{L^2(\Omega)}.
\end{aligned}
\end{equation}

From \cite{ref_article2}, if the exact solution $\mathbf{q}^{\prime}$ is sufficiently regular, the interpolation errors satisfy
\begin{equation}\notag
\begin{aligned}
&\forall t \geq 0, \quad\|\boldsymbol{\chi}\|_{\mathrm{DG}} \leq C h^{k}\|\mathbf{q}^{\prime}_{U,V}(t)\|_{H^{k+1}\left(Q_h\right)}\leq C h^{k}\|\mathbf{q}^{\prime}(t)\|_{H^{k+1}\left(Q_h\right)}, \\
&\forall t \geq 0, \quad\|\boldsymbol{\chi}_{\phi}\|_{L^2(\Omega)} \leq C h^{k+1}\|\phi_\eta^{\prime}(t)\|_{H^{k+1}\left(Q_h\right)}, \\
&\forall t \geq 0, \quad\|\boldsymbol{\chi}_{U,V}\|_{L^2(\Omega)} \leq C h^{k+1-\delta}\|\left(U^\prime,V^\prime \right)(t)\|_{H^{k+1}\left(Q_h\right)},
\end{aligned}
\end{equation}
where $\delta=0$ if $\beta \ge 3$ and specific mesh conditions are met, otherwise $\delta=1$.

Integrating the error equation from $0$ to $T$, assuming $\boldsymbol{\xi}(0)=0$, and substituting the interpolation bounds yields
\begin{equation}\notag
\begin{aligned}
&\|\boldsymbol{\xi}_{U,V}(T)\|_{L^2(\Omega)}^2
+ \frac{\nu_T}{2}\int_0^T\|\boldsymbol{\xi}\|_{\mathrm{DG}}^2 \, \mathrm{d} t \\
\le& C \int_0^T\left\|\boldsymbol{\xi}_{U,V}\right\|^2_{L^2(\Omega)} \, \mathrm{d} t + Ch^{2(k+1)-2\delta}\left(\left\|\partial_t \mathbf{q}^\prime\right\|^2_{L^2(0,T;H^{k+1}(Q_h))} +
\left\|\mathbf{q}^\prime  \right\|^2_{L^2(0,T;H^{k+1}(Q_h))} \right).
\end{aligned}
\end{equation}
Applying Gr\"onwall's inequality provides the bound for $\boldsymbol{\xi}_{U,V}$. The triangle inequality then gives the final estimates for the momentum components in $L^\infty(0,T;L^2(\Omega))$ and the DG norm.

Finally, we analyze the $L^2$ error for $\phi_\eta^\prime$. Testing the continuity equation with $\boldsymbol{\xi}_\phi$ yields
\begin{equation}\notag
\frac{1}{2}\frac{\mathrm{d}}{\mathrm{d}t}\|\boldsymbol{\xi}_{\phi}\|_{L^2(\Omega)}^2
 - C\|\boldsymbol{\xi}_{\phi}\|_{L^2(\Omega)}^2 \le b_h(\boldsymbol{\chi}_{\phi}, \boldsymbol{\xi}_{\phi}),
\end{equation}
where the estimate is similar as
\begin{equation}\notag
\begin{aligned}
b_h(\boldsymbol{\chi}_{\phi}, \boldsymbol{\xi}_{\phi}) &\le
 \left( \frac{2C_k C_a\epsilon}{\sigma} + 2C_a \epsilon + \tau^*C_k\epsilon \right)\| \boldsymbol{\xi}_\phi\|_{L^2(\Omega)}^2 
+ 2\tau^* C_k C_\epsilon h_K^{-1}\| \boldsymbol{\chi}_{\phi}\|_{L^2(\Omega)}^2 \\
& \quad + 2\tau^* C_k C_\epsilon h_K \|\nabla \boldsymbol{\chi}_{\phi}\|_{L^2(\Omega)}^2  
+  C_a^\prime C_\epsilon \|\boldsymbol{\chi}_{U,V}\|_{L^2(\Omega)}^2 + C_a C_\epsilon \|\boldsymbol{\chi}\|_{\mathrm{DG}}^2.
\end{aligned}
\end{equation}
Note the specific $h_K^{-1}$ and $h_K$ scaling factors are derived from the trace theorem. 

Combining all bounds, applying Gr\"onwall's inequality for $\boldsymbol{\xi}_\phi$, and using the triangle inequality provides the final result
\begin{equation}\notag
\begin{aligned}
\|\phi_\eta^\prime-\phi_{\eta,h}^\prime\|_{L^\infty(0,T;L^2(\Omega))}  \le Ch^{k+1-\delta}\left\|\mathbf{q}^\prime\right\|_{H^1(0,T;H^{k+1}(Q_h))} + Ch^{k} \left\|\mathbf{q}^\prime \right\|_{L^2(0,T;H^{k+1}(Q_h))}.
\end{aligned}
\end{equation}
This completes the proof.
\end{proof}

\begin{remark}
For $\beta=1$, the mesh-independent theoretical statement is the adjoint-inconsistent $O(h^k)$ momentum $L^2$ bound. An optimal $O(h^{k+1})$ rate may 
occur for odd $k$ on particular structured meshes \cite{Houston2002,Larson2004}.
\end{remark}

\section{Time discretization}
\label{sec:time}

To advance the semi-discrete system in time, we employ the classical three-stage, third-order strong stability preserving Runge--Kutta (SSP-RK3) method. Let $\mathcal{L}_h(\mathbf{Q})$ denote the nonlinear spatial discretization operator, which incorporates the hyperbolic fluxes, the NIPG viscous terms, and the source contributions evaluated at the state $\mathbf{Q}$. The fully discrete update from $t_n$ to $t_{n+1} = t_n + \Delta t$ is given by
\begin{equation}
\label{eq:ssprk3}
\begin{aligned}
\mathbf{Q}^{(1)} &= \mathbf{Q}^{n} + \Delta t\,\mathcal{L}_h(\mathbf{Q}^{n}), \\
\mathbf{Q}^{(2)} &= \frac{3}{4}\mathbf{Q}^{n} + \frac{1}{4}\mathbf{Q}^{(1)} + \frac{1}{4}\Delta t\,\mathcal{L}_h(\mathbf{Q}^{(1)}), \\
\mathbf{Q}^{n+1} &= \frac{1}{3}\mathbf{Q}^{n} + \frac{2}{3}\mathbf{Q}^{(2)} + \frac{2}{3}\Delta t\,\mathcal{L}_h(\mathbf{Q}^{(2)}).
\end{aligned}
\end{equation}
This explicit integrator is chosen for its strong stability preserving (SSP) property, which can help control spurious oscillations near sharp gradients 
or discontinuities.

Since the scheme is fully explicit, the time step is restricted by a Courant–Friedrichs–Lewy (CFL) condition. Following \cite{Anderson1995}, the local admissible time step on each element $K \in \Omega_h$ is defined as
\begin{equation}
\label{eq:cfl}
\Delta t_K = \delta \cdot \left[ \frac{|u|}{\Delta x} + \frac{|v|}{\Delta y} + \sqrt{gH}\sqrt{\frac{1}{\Delta x^2} + \frac{1}{\Delta y^2}} + 2 \nu_{T} \left( \frac{1}{\Delta x^2} + \frac{1}{\Delta y^2} \right) \right]_K^{-1}, \quad \forall K \in \Omega_h.
\end{equation}
Here, $\Delta x$ and $\Delta y$ denote the characteristic element sizes in the two coordinate directions. To ensure stability for higher-order DG discretizations, the Courant number is chosen as $\delta = 0.2/(k+1)$.

\section{Numerical experiments}
\label{sec:numerics}

This section reports four numerical experiments: a manufactured-solution convergence study, a penalty-sensitivity study, 
a smooth rotating benchmark, and two topography-aware tests. They provide numerical evidence about accuracy, penalty sensitivity, and selected balance properties.

The implementation is carried out within the open-source Firedrake framework \cite{Rathgeber2017}. 
It uses the Unified Form Language (UFL) \cite{Alnas2014} in Python to express variational forms, 
which are then translated into optimized low-level C code through automated code generation. All numerical simulations presented in this section are performed on an Ubuntu 22.04.5 LTS machine with 16 CPU cores and 27 GB of RAM. 

\subsection{Common numerical setting}
\label{sec:common-setting}

Unless otherwise stated, the computational domain is the unit square
\[
\Omega=[0,1]^2,
\]
with periodic boundary conditions. The mesh consists of uniform triangular meshes obtained by subdividing an $N\times N$ Cartesian grid into two triangles per cell, with $N=8,16,32,64$. The characteristic mesh size is denoted by $h$, while $h_e$ denotes the diameter of each edge $e \in \mathcal{E}_h$.

We consider polynomial degrees $k \in \{1,2\}$. The NIPG penalty parameter is defined as $\mu_e = \sigma h_e^{-\beta}$, where we compare 
the standard scaling ($\beta=1$) with the super-penalized choice ($\beta=3$). The additional value $\beta=2$ is considered in the sensitivity study. Unless otherwise specified, the gravitational constant is $g=9.81$, while the Coriolis parameter $f_c$, turbulent viscosity $\nu_T$, and final time $T$ are specified for each test case.

The numerical errors are measured using the component-wise $L^2$ norm
\begin{equation}
\label{eq:l2error}
E_{L^2}(\mathbf{q})=\left(\sum_{K\in Q_h}\int_K |\mathbf{q}-\mathbf{q}_h|^2\,\mathrm{d} \mathbf{x}\right)^{1/2},
\end{equation}
and by the DG-type momentum error
\begin{equation}
\label{eq:dgerror}
E_{\mathrm{DG}}(U,V)^2
=
\sum_{K\in Q_h}\|\nabla(U-U_h,V-V_h)\|_{L^2(K)}^2
+
\sum_{e\in\Gamma_h}\frac{\sigma}{h_e^\beta}
\|[[(U-U_h,V-V_h)]]\|_{L^2(e)}^2.
\end{equation}
Because the manufactured fields are symmetric under interchange of $x$ and $y$, the reported $U$ and $V$ errors agree to the displayed precision; only $E_{L^2}(U)$ is tabulated.
The observed convergence rate is computed as
\begin{equation}
\label{eq:observed-order}
\mathrm{order}=\frac{\log(E_h/E_{h/2})}{\log 2}.
\end{equation}

\subsection{Manufactured-solution convergence study}
\label{sec:manufactured}

The first experiment measures observed spatial rates and examines how the penalty-dependent estimates in Theorem~\ref{th:error} appear in the coupled system. The exact solution is prescribed as
\begin{equation}
\label{eq:manufactured_eta}
\eta(x,y,t)=1+0.05\sin(2\pi x)\sin(2\pi y)\cos t,
\end{equation}
\begin{equation}
\label{eq:manufactured_u}
\begin{aligned}
u(x,y,t)&=0.1\cos(2\pi x)\sin(2\pi y)\cos t,\\
v(x,y,t)&=0.1\sin(2\pi x)\cos(2\pi y)\cos t.
\end{aligned}
\end{equation}

The corresponding forcing terms are obtained by substituting the above into the viscous rotating shallow water system in geopotential variables. The bottom topography is set to $b(x,y)=0$, and the Coriolis parameter is taken as $f_c=0$. The final time is $T=0.01$. To assess the robustness of the scheme across different flow regimes, we vary the viscosity parameter as $\nu_T \in \{10^{-3}, 10^{-2}, 1\}$. The penalty prefactor is set to $\sigma=10$ unless otherwise specified; its influence is further investigated in Section~\ref{sec:penalty-sensitivity}.

\begin{table}[htbp]
\centering
\caption{$L^2$ errors, DG-type momentum errors and observed orders for the manufactured-solution convergence study when $\nu_T=0.01$.}
\label{tab1}
\small
\begin{tabular}{cccccccccc}
\toprule
$k$ & $\beta$ & $\Delta t$ & $h$ & $E_{L^2}(\phi_\eta)$ & order & $E_{L^2}(U)$ & order & $E_{\mathrm{DG}}(U,V)$ & order\\
\midrule
$1$ & $1$ & $2\times 10^{-4}$ & $1/8$ & $1.099 \times 10^{-2}$ & -- & $2.757\times 10^{-2}$ & -- & $2.535\times 10^{-1}$ & --\\
 & & & $1/16$ & $3.000 \times 10^{-3}$ & $1.874$ & $6.594\times 10^{-3}$ & $2.064$ & $1.309 \times 10^{-1}$ & $0.953$\\
 & & & $1/32$ & $7.775 \times 10^{-4}$ & $1.948$ & $1.488\times 10^{-3}$ & $2.147$ & $6.297\times 10^{-2}$ & $1.056$\\
  & & & $1/64$ & $2.034 \times 10^{-4}$ & $1.935$ & $3.444\times 10^{-4}$ & $2.112$  & $2.987\times 10^{-2}$ & $1.076$ \\
$1$ & $3$ & $1\times 10^{-7}$ & $1/8$ & $1.170 \times 10^{-2}$ & -- & $2.897\times 10^{-2}$ & -- & $3.402\times 10^{-1}$ & -- \\
 & & & $1/16$ & $3.418 \times 10^{-3}$ & $1.775$ & $6.537\times 10^{-3}$ & $2.148$ & $1.318\times 10^{-1}$ & $1.369$ \\
 & & & $1/32$ & $9.317 \times 10^{-4}$ & $1.875$ & $1.593\times 10^{-3}$ & $2.037$ & $6.172\times 10^{-2}$ & $1.094$ \\
  & & & $1/64$ & $2.429 \times 10^{-4}$ & $1.939$ & $3.957\times 10^{-4}$ & $2.009$  & $3.038\times 10^{-2}$ & $1.023$ \\
$2$ & $1$ & $1\times 10^{-4}$ & $1/8$ & $1.432 \times 10^{-3}$ & -- & $3.126\times 10^{-3}$ & -- & $4.026\times 10^{-2}$ & -- \\
 & & & $1/16$ & $1.788 \times 10^{-4}$ & $3.001$ & $4.234\times 10^{-4}$ & $2.884$ & $1.025\times 10^{-2}$ & $1.974$ \\
 & & & $1/32$ & $2.261 \times 10^{-5}$ & $2.984$ & $4.713\times 10^{-5}$ & $3.167$ & $2.463\times 10^{-3}$ & $2.057$ \\
  & & & $1/64$ & $3.170 \times 10^{-6}$ & $2.834$ & $5.423\times 10^{-6}$ & $3.119$  & $5.966\times 10^{-4}$ & $2.046$ \\
$2$ & $3$ & $5\times 10^{-8}$ & $1/8$ & $2.225 \times 10^{-3}$ & -- & $3.421\times 10^{-3}$ & -- & $5.299\times 10^{-2}$ & -- \\
 & & & $1/16$ & $5.961 \times 10^{-4}$ & $1.900$ & $5.481\times 10^{-4}$ & $2.642$ & $1.123\times 10^{-2}$ & $2.239$\\
 & & & $1/32$ & $1.510 \times 10^{-4}$ & $1.981$ & $7.067\times 10^{-5}$ & $2.955$ & $2.509\times 10^{-3}$ & $2.161$\\
  & & & $1/64$ & $3.738 \times 10^{-5}$ & $2.014$ & $8.463\times 10^{-6}$ & $3.023$  & $6.011\times 10^{-4}$ & $2.062$\\
\bottomrule
\end{tabular}
\end{table}

\begin{table}[htbp]
\centering
\caption{Robustness study: DG-type momentum errors, $L^2$ errors and convergence orders for weak ($\nu_T=10^{-3}$) and strong ($\nu_T=1$) viscous effects with fixed parameters: $k=2$, $\beta=1$, $\sigma=10$.}
\label{tab3}
\begin{tabular}{cccccccccc}
\toprule
$\nu_T$ & $\Delta t$ & $h$ & $E_{\mathrm{DG}}(U,V)$ & order & $E_{L^2}(\phi_\eta)$ & order & $E_{L^2}(U)$ & order \\
\midrule
$10^{-3}$ &  $2\times 10^{-4}$ & $1/8$ & $4.286\times 10^{-2}$ & -- & $1.412\times 10^{-3}$ & -- & $3.454\times 10^{-3}$ & -- \\
 & & $1/16$ & $1.161\times 10^{-2}$ & $1.885$ & $1.752\times 10^{-4}$ & $3.011$ & $5.348 \times 10^{-4}$ & $2.691$ \\
 & & $1/32$ & $2.862\times 10^{-3}$ & $2.020$ & $2.157\times 10^{-5}$ & $3.022$ & $6.493 \times 10^{-5}$ & $3.042$ \\
 & & $1/64$ & $6.879\times 10^{-4}$ & $2.057$ & $2.676\times 10^{-6}$ & $3.011$ & $7.664\times 10^{-6}$ & $3.083$ \\
$1$ &  $5\times 10^{-7}$ & $1/8$ & $3.395\times 10^{-2}$ & -- & $1.820\times 10^{-3}$ & -- & $5.120\times 10^{-3}$ & -- \\
 & & $1/16$ & $8.821\times 10^{-3}$ & $1.944$ & $2.951\times 10^{-4}$ & $2.625$ & $1.175 \times 10^{-3}$ & $2.124$ \\
 & & $1/32$ & $2.230\times 10^{-3}$ & $1.984$ & $5.023\times 10^{-5}$ & $2.555$ & $2.866 \times 10^{-4}$ & $2.035$ \\
 & & $1/64$ & $5.591\times 10^{-4}$ & $1.996$ & $9.948\times 10^{-6}$ & $2.336$ & $7.121\times 10^{-5}$ & $2.009$ \\
\bottomrule
\end{tabular}
\end{table}

\begin{figure}[htbp]
    \centering
    \begin{subfigure}{0.32\textwidth}
        \centering
        \includegraphics[width=\linewidth]{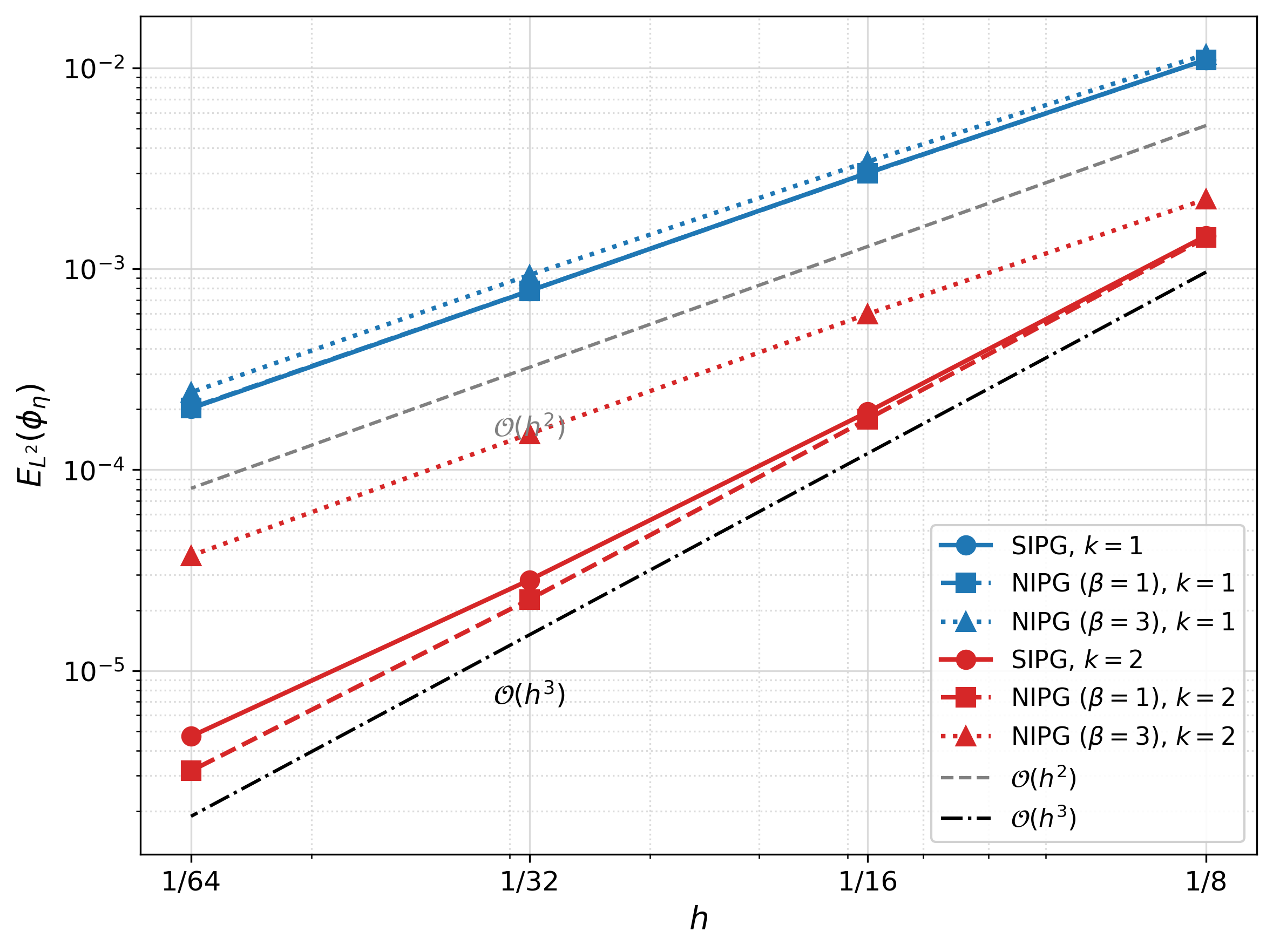}
    \end{subfigure}
    \hfill
    \begin{subfigure}{0.32\textwidth}
        \centering
        \includegraphics[width=\linewidth]{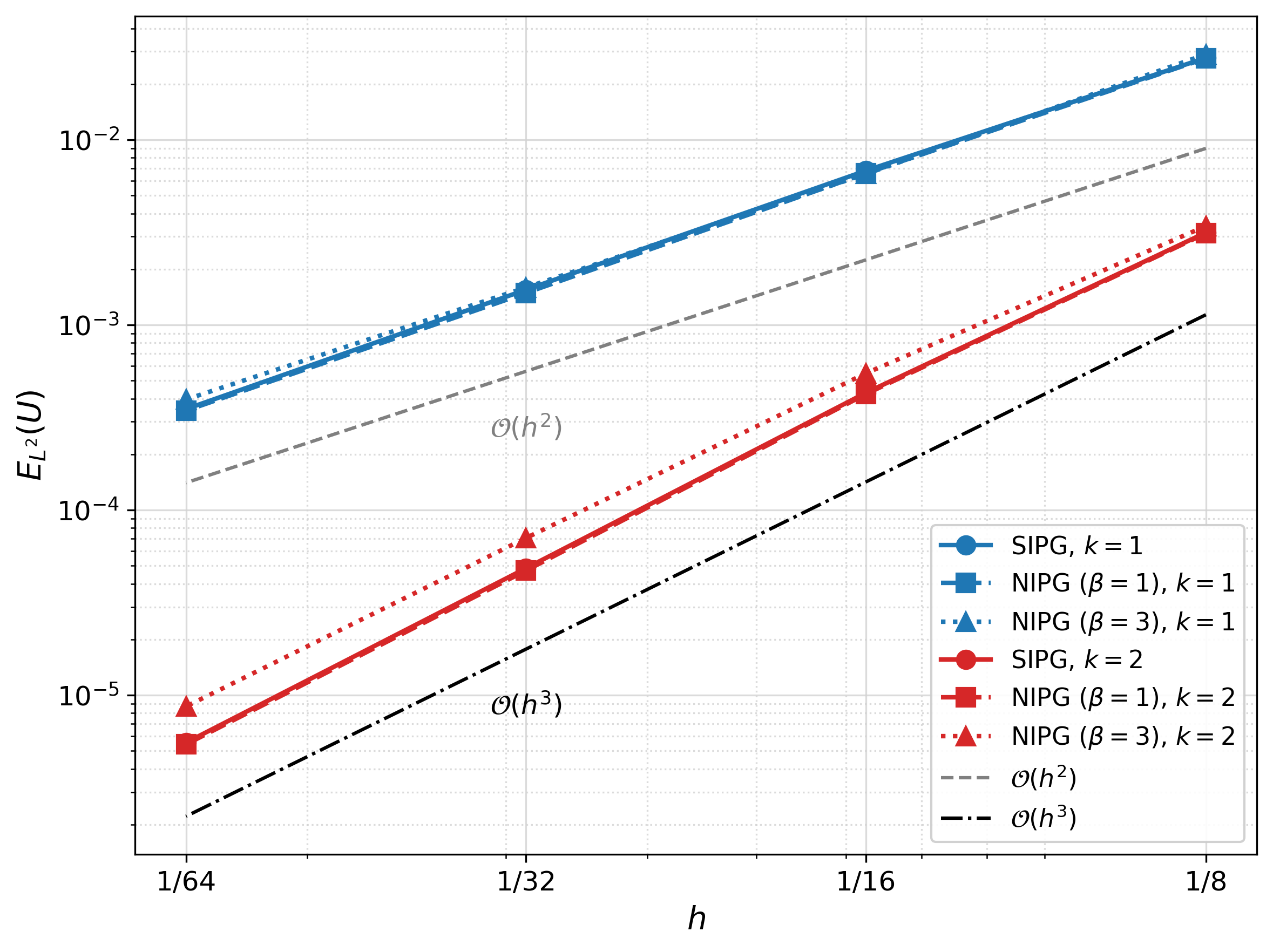}
    \end{subfigure}
    \hfill
    \begin{subfigure}{0.32\textwidth}
        \centering
        \includegraphics[width=\linewidth]{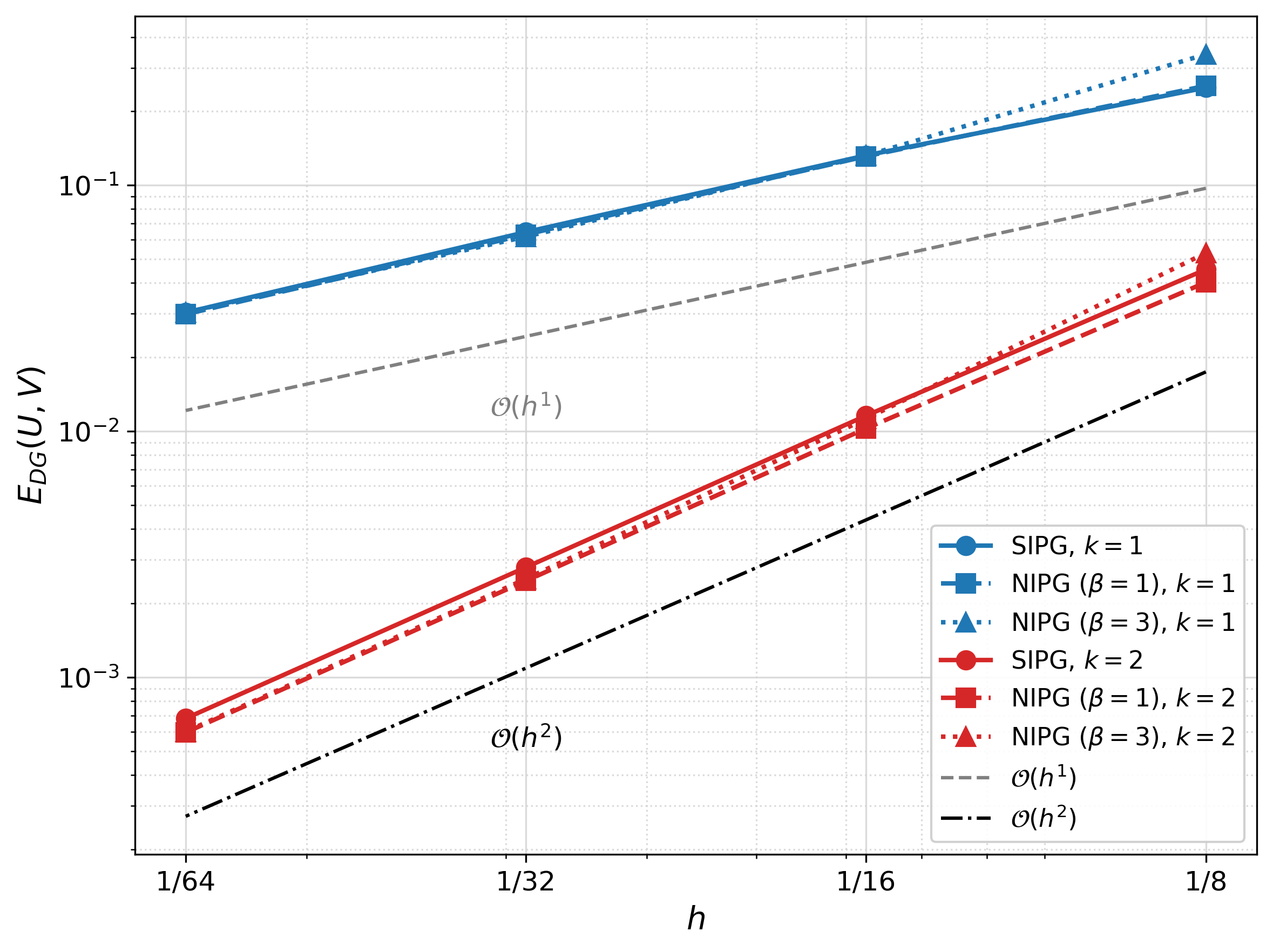}
    \end{subfigure}

    \caption{Convergence for the manufactured-solution study. Each plot includes curves for NIPG ($\beta=1$), NIPG ($\beta=3$), and SIPG for comparison across polynomial degrees $k=1,2$.}
    \label{fig1}
\end{figure}

Table~\ref{tab1} reports the spatial convergence rates of the $L^2$ errors and DG-type momentum errors, respectively, with $\nu_{T}=10^{-2}$. 
In this regime, the dynamics are primarily governed by advection and the diffusive NIPG operator contributes weakly to the overall error. 
As a result, the momentum variables achieve the optimal $O(h^{k+1})$ convergence rate in the $L^2$ norm for both penalty scalings. 
The geopotential error attains the optimal $O(h^{k+1})$ rate for $\beta=1$; for the super-penalized case $\beta=3$ it remains optimal at the odd degree $k=1$ but degrades to $O(h^{k})$ at the even degree $k=2$, in agreement with the odd/even behavior of NIPG discussed in the remark following Theorem~\ref{th:error}. In addition, the DG norm exhibits the expected $O(h^{k})$ behavior in all cases.

Table~\ref{tab3} presents the results with $\nu_{T}=1$ and $0.001$ for $k=2$ and $\beta=1$. In this case, when $\nu_{T}=1$, the suboptimal convergence rate is observed in the momentum $L^2$ error for even polynomial degree, which is consistent with the theoretical estimates for the NIPG formulation in \cite{ref_article2}.

Figure~\ref{fig1} compares the spatial convergence behavior of NIPG ($\beta=1,3$) with SIPG given in \cite{ref_article4} under polynomial degrees $k=1,2$. In all cases, the error curves are parallel to the reference slopes, confirming the theoretical convergence rates. Among these three methods, NIPG with $\beta=1$ generally achieves the smallest absolute errors, while $\beta=3$ produces the largest.

\begin{remark}
In geophysical fluid dynamics, physical regimes are typically characterized by high Reynolds numbers and weak viscosity \cite{Pedlosky2013}. 
To reflect this setting, we adopt a convection-dominated setting with $\nu_T=10^{-2}$, which preserves optimal convergence rates while avoiding viscosity-induced degradation. Moreover, the observed robustness of the scheme under this setting justifies its use for all subsequent numerical experiments.
The remaining experiments use $\nu_T=10^{-2}$ to focus on penalty-induced jump control and explicit stiffness.
\end{remark}

\subsection{Penalty-sensitivity study}
\label{sec:penalty-sensitivity}

This section presents a systematic penalty-sensitivity study to investigate how the prefactor $\sigma$ and the exponent $\beta$ influence the accuracy, stability, and numerical stiffness of the NIPG discretization. The goal is to verify that the observed numerical behavior is governed by a consistent mathematical mechanism rather than being an artifact of a particular tuning of parameters.

Using the manufactured solution from Section~\ref{sec:manufactured} with an $N \times N = 16 \times 16$ mesh, polynomial degree $k=2$, and final time $T=0.1$, the penalty exponent and prefactor are varied over $\beta \in \{1,2,3\}$ and $\sigma \in \{0.5, 1, 10, 100\}$, respectively. For each parameter pair $(\sigma,\beta)$, we monitor the geopotential error $E_{L^2}(\phi_\eta)$, the momentum errors $E_{L^2}(U)$ and $E_{L^2}(V)$, the DG-type norm $E_{\mathrm{DG}}(U,V)$ defined in \eqref{eq:l2error}–\eqref{eq:dgerror}, the maximum interface jump measure $J_{\max}$ defined by
\begin{equation}
\label{eq:jump_max}
J_{\max} = \max_{0\le t\le T} \left( \sum_{e\in\Gamma_h} \|[[ (U_h,V_h) ]]\|_{L^2(e)}^2 \right)^{1/2}.
\end{equation}
We also report the stable time step $\Delta t$ used by the explicit computation. The corresponding diagnostics are summarized in Table~\ref{tab4} and Figure~\ref{fig2}.

\begin{table}[htbp]
\centering
\caption{Penalty-sensitivity diagnostics for the NIPG method with $k=2$, $N=16$, $T=0.1$, and $\nu_T=10^{-2}$.}
\label{tab4}
\small
\begin{tabular}{ccccccccc}
\toprule
$k$ & $\beta$ & $\sigma$ &  $\Delta t$&$E_{L^2}(\phi_\eta)$ & $E_{L^2}(U)$ & $E_{\mathrm{DG}}(U,V)$ & $J_{\max}$ \\
\midrule
$2$ & $1.0$ & $0.5$ &  $1 \times 10^{-3}$ &$1.695\times 10^{-4}$ & $4.337 \times 10^{-4}$ & $2.710\times 10^{-2}$ & $7.295\times 10^{-3}$ \\
 & & $1.0$ &  $1 \times 10^{-3}$ & $1.699\times 10^{-4}$ & $4.326 \times 10^{-4}$ & $2.755 \times 10^{-2}$ & $7.220\times 10^{-3}$  \\
 & & $10.0$ &  $1 \times 10^{-3}$ &$1.789\times 10^{-4}$ & $4.234 \times 10^{-4}$ & $3.349\times 10^{-2}$ & $6.714\times 10^{-3}$ \\
 &  & $100.0$ &  $2 \times 10^{-4}$ &$4.936\times 10^{-4}$ & $4.580 \times 10^{-4}$ & $5.524\times 10^{-2}$ & $6.714\times 10^{-3}$ \\
 $2$ & $2.0$ & $0.5$ &  $1 \times 10^{-3}$ &$1.738\times 10^{-4}$ & $4.260 \times 10^{-4}$ & $3.099\times 10^{-2}$ & $6.714\times 10^{-3}$ \\
 & & $1.0$ &  $1 \times 10^{-3}$ &$1.809\times 10^{-4}$ & $4.231 \times 10^{-4}$ & $3.416\times 10^{-2}$ & $6.714\times 10^{-3}$ \\
 &  & $10.0$ &  $2 \times 10^{-4}$ &$5.387\times 10^{-4}$ & $4.635 \times 10^{-4}$ & $5.673\times 10^{-2}$ & $6.714\times 10^{-3}$ \\
 &  & $100.0$ &  $2 \times 10^{-5}$ &$1.855\times 10^{-3}$ & $6.638 \times 10^{-4}$ & $6.364\times 10^{-2}$ & $6.714\times 10^{-3}$ \\
 $2$ & $3.0$ & $0.5$ &  $2 \times 10^{-4}$ &$3.625\times 10^{-4}$ & $4.404 \times 10^{-4}$ & $4.978\times 10^{-2}$ & $6.714\times 10^{-3}$\\
 &  & $1.0$ &  $1 \times 10^{-4}$ & $5.878\times 10^{-4}$ & $4.693 \times 10^{-4}$ & $5.818\times 10^{-2}$ & $6.714 \times 10^{-3}$ \\
 &  & $10.0$ &  $2 \times 10^{-5}$ & $1.923\times 10^{-3}$ & $6.836 \times 10^{-4}$ & $6.278\times 10^{-2}$ & $6.714\times 10^{-3}$ \\
 & & $100.0$ &  $1 \times 10^{-6}$ &$2.609\times 10^{-3}$ & $8.892 \times 10^{-4}$ & $4.839\times 10^{-2}$ & $6.714\times 10^{-3}$ \\
\bottomrule
\end{tabular}
\end{table}

 As shown in Table~\ref{tab4}, large $\sigma$ or the super-penalized scaling $\beta = 3$ induces a pronounced deterioration in computational efficiency. The admissible time step $\Delta t$ decreases dramatically from $\mathcal{O}(10^{-4})$ to $\mathcal{O}(10^{-6})$ when $\sigma=100$, reflecting a severe stiffness induced by over-penalization in the explicit time integration.

In terms of spatial accuracy, the momentum error remains comparatively insensitive to the penalty parameters. This robustness is attributed to the strong suppression of interface jumps, as evidenced by the reduction of $J_{\max}$ in Table~\ref{tab4}. Once the inter-element discontinuities are sufficiently damped, the momentum error is primarily governed by the polynomial 
approximation capacity within each element.

By contrast, the geopotential error $E_{L^2}(\phi_\eta)$ is significantly affected by the super-penalty. This
behavior can be explained by the fact that the geopotential equation is not directly controlled by a diffusive NIPG stabilization, so that the coupled system amplifies the effect of
penalization through the coupling terms, resulting in a loss of balance between stabilization and approximation accuracy.
Figure~\ref{fig2} summarizes the same trend: excessive penalization does not improve the error at the fixed resolution considered here.

\begin{figure}[htbp]
\centering
\includegraphics[width=0.6\linewidth]{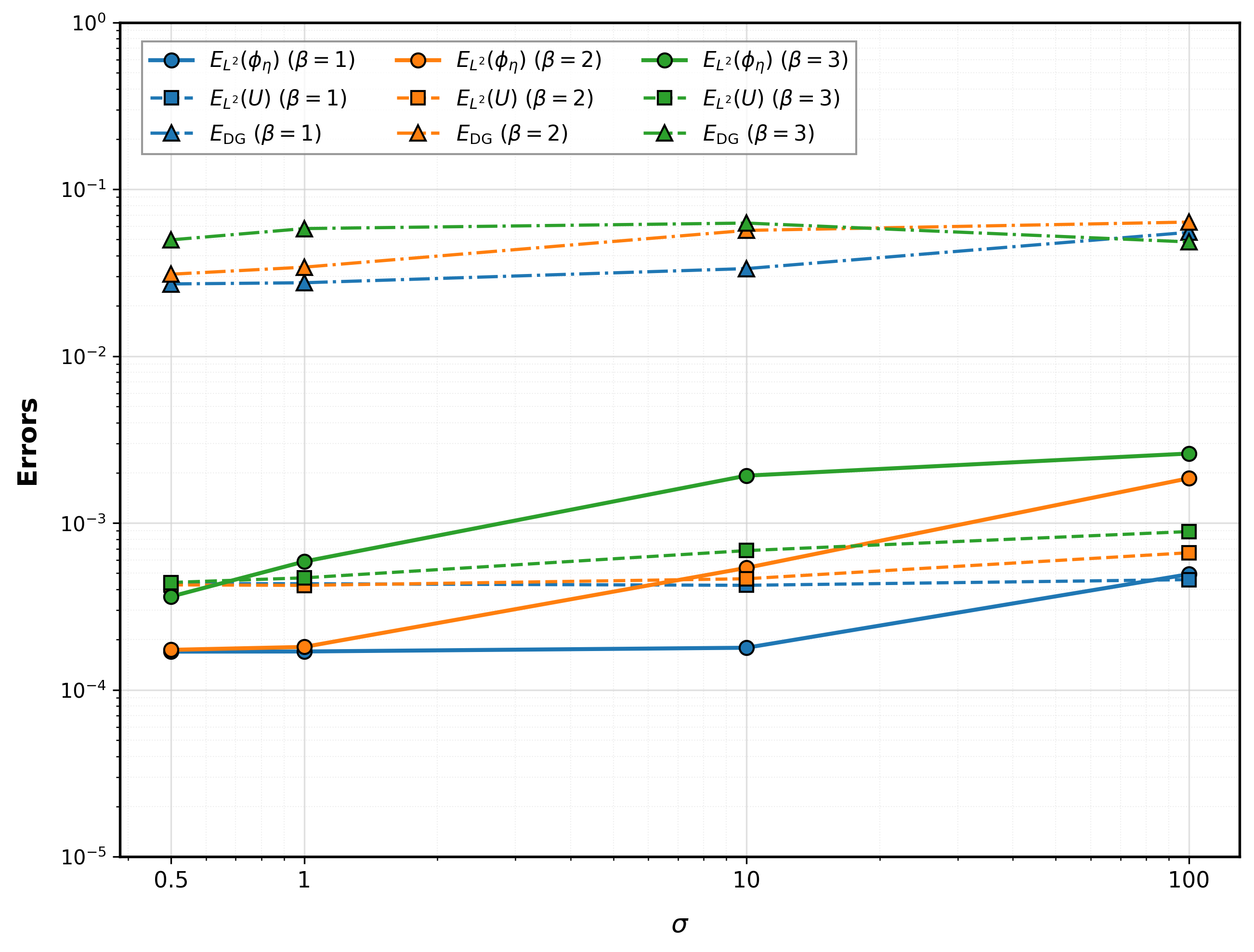}
\caption{Error versus penalty prefactor for selected penalty exponents.}
\label{fig2}
\end{figure}

\subsection{Smooth rotating benchmark with nonzero Coriolis forcing}
\label{sec:rotating-benchmark}

To evaluate the performance of the numerical scheme under rotational effects, the third experiment activates the Coriolis coupling ($f_c \ne 0$). This benchmark is based on a smooth geostrophically balanced reference state, commonly used for validating geophysical dynamical cores \cite{ambatibokhove2007,ref_article6,maddison2011}. We define the background geostrophic free surface as
\begin{equation}
\label{eq:eta_geo}
\eta_{\mathrm{geo}}(x,y)=1+a\cos(2\pi x)\cos(2\pi y),
\end{equation}
where the velocity fields satisfy the leading-order geostrophic balance relations
\begin{equation}
\label{eq:geostrophic_components}
-f_c v_{\mathrm{geo}}+g\frac{\partial \eta_{\mathrm{geo}}}{\partial x}=0,\qquad
f_c u_{\mathrm{geo}}+g\frac{\partial \eta_{\mathrm{geo}}}{\partial y}=0.
\end{equation}
Differentiating \eqref{eq:eta_geo} gives the explicit geostrophic velocity components
\begin{equation}
\label{eq:geostrophic_velocity}
u_{\mathrm{geo}}
=
\frac{2\pi g a}{f_c}\cos(2\pi x)\sin(2\pi y),
\qquad
v_{\mathrm{geo}}
=
-\frac{2\pi g a}{f_c}\sin(2\pi x)\cos(2\pi y).
\end{equation}

The physical parameters for this test are chosen as $a=10^{-3}$, $f_c=10$ and $T=0.1$. To investigate the dynamic geostrophic adjustment process, the initial velocity is set to \eqref{eq:geostrophic_velocity}, while the initial free surface is:
\begin{equation}
\label{eq:rotating_perturbation}
\eta(x,y,0)
=
\eta_{\mathrm{geo}}(x,y)
+
\varepsilon\exp\left(-100\bigl((x-0.5)^2+(y-0.5)^2\bigr)\right),
\end{equation}
with the fixed $\varepsilon=10^{-4}$. The discrete approximations use $k=2$ and $N=16,32$.
To systematically assess the influence of the interior penalty operator on the near-balanced flow dynamics,
the penalty parameters are varied over $\beta\in\{1,3\}$ and $\sigma\in\{1,5\}$. For each reported parameter pair, errors are measured against a corresponding $N=64$ reference computation with the same polynomial degree and penalty parameters and with a time step small enough that the temporal contribution is negligible relative to the displayed spatial error.

To monitor the evolution of the near-balanced wave dynamics, the geostrophic imbalance is quantified via the $L^2$ norm:
\begin{equation}
\label{eq:geo_imbalance}
I_{\mathrm{geo}}(t)
=
\left\|
\begin{pmatrix}
-f_c v_h + g\partial_x\eta_h\\
f_c u_h + g\partial_y\eta_h
\end{pmatrix}
\right\|_{L^2(\Omega)}.
\end{equation}

The results are summarized in Table~\ref{tab5}, and the representative flow snapshots and the temporal histories of $I_{\mathrm{geo}}(t)$ are illustrated in Figure~\ref{fig3}.

\begin{table}[htbp]
\centering
\caption{Errors and geostrophic-imbalance diagnostics for the rotating benchmark.}
\label{tab5}
\begin{tabular}{cccccccc}
\toprule
$k$ & $\beta$ & $\sigma$ & $N$ & $E_{L^2}(\phi_\eta)$ & $E_{L^2}(U,V)$ & $E_{\mathrm{DG}}(U,V)$ & $I_{\mathrm{geo}}(T)$ \\
\midrule
$2$ & $1$ & $1$ & $16$ & $2.594 \times 10^{-6}$ & $1.534 \times 10^{-5}$ & $4.650 \times 10^{-3}$ & $1.369 \times 10^{-3}$ \\
 &  &  & 
 $32$ & $3.238 \times 10^{-7}$ & $1.913 \times 10^{-6}$ & $1.114 \times 10^{-3}$ & $1.295 \times 10^{-3}$\\
  &  & $5$ & $16$ & $2.596 \times 10^{-6}$ & $1.535 \times 10^{-5}$ & $5.337 \times 10^{-3}$ & $1.368 \times 10^{-3}$ \\
 &  &  & 
 $32$ & $3.240 \times 10^{-7}$ & $1.910 \times 10^{-6}$ & $1.264 \times 10^{-3}$ & $1.294 \times 10^{-3}$\\
 $2$ & $3$ & $1$ & $16$ & $2.597 \times 10^{-6}$ & $1.535 \times 10^{-5}$ & $5.337 \times 10^{-3}$ & $1.368 \times 10^{-3}$ \\
 &  &  & 
 $32$ & $3.240 \times 10^{-7}$ & $1.910 \times 10^{-6}$ & $1.264 \times 10^{-3}$ & $1.294 \times 10^{-3}$\\
  &  & $5$ & $16$ & $2.597 \times 10^{-6}$ & $1.535 \times 10^{-5}$ & $5.337 \times 10^{-3}$ & $1.368 \times 10^{-3}$ \\
 &  &  & 
 $32$ & $3.240 \times 10^{-7}$ & $1.910 \times 10^{-6}$ & $1.264 \times 10^{-3}$ & $1.294 \times 10^{-3}$\\
\bottomrule
\end{tabular}
\end{table}

\begin{figure}[!htbp]
    \centering
    \begin{subfigure}{0.3\textwidth}
        \centering
        \includegraphics[width=\linewidth]{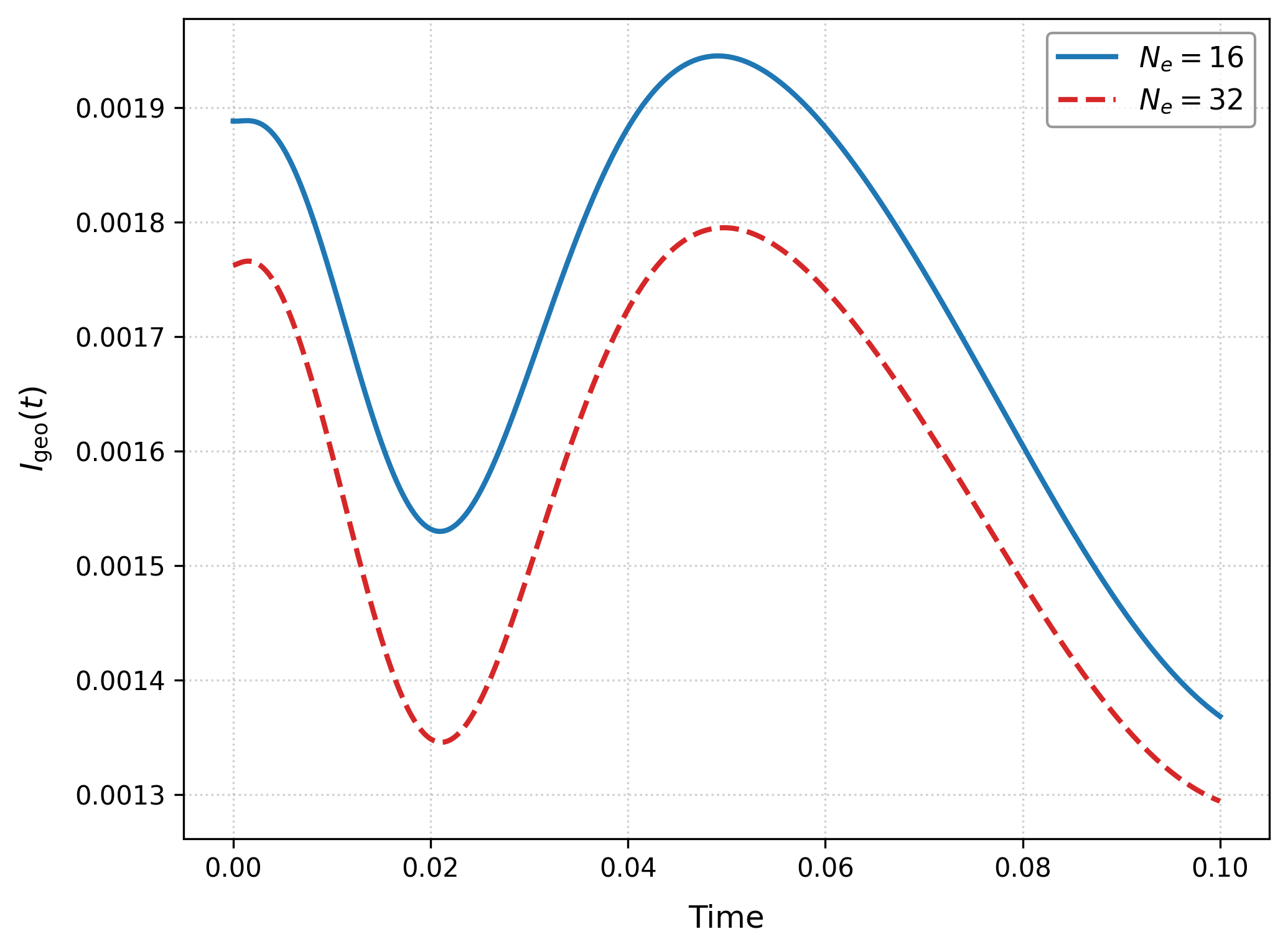}
    \end{subfigure}
    \hfill
    \begin{subfigure}{0.3\textwidth}
        \centering
        \includegraphics[width=\linewidth]{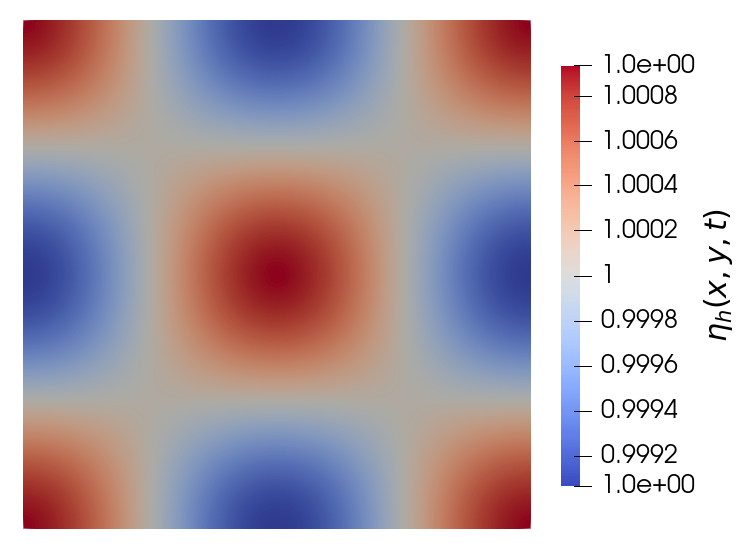}
    \end{subfigure}
    \hfill
    \begin{subfigure}{0.3\textwidth}
        \centering
        \includegraphics[width=\linewidth]{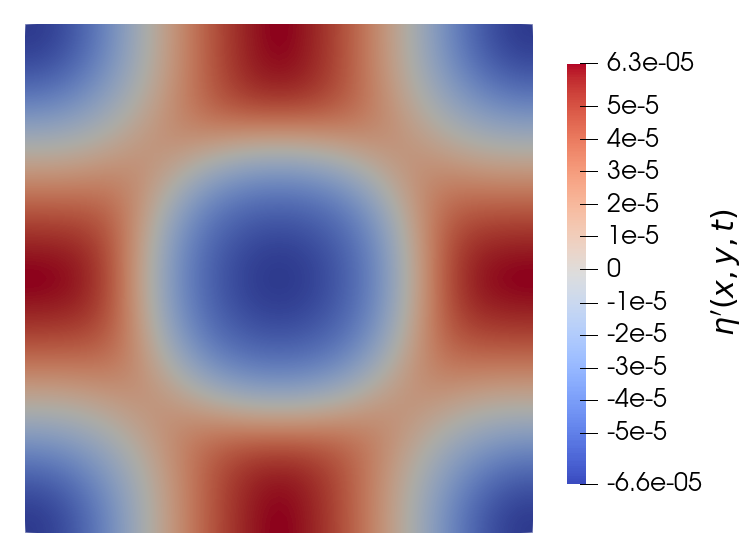}
    \end{subfigure} \\
    \vspace{0.5em}
    \begin{subfigure}{0.3\textwidth}
        \centering
        \includegraphics[width=\linewidth]{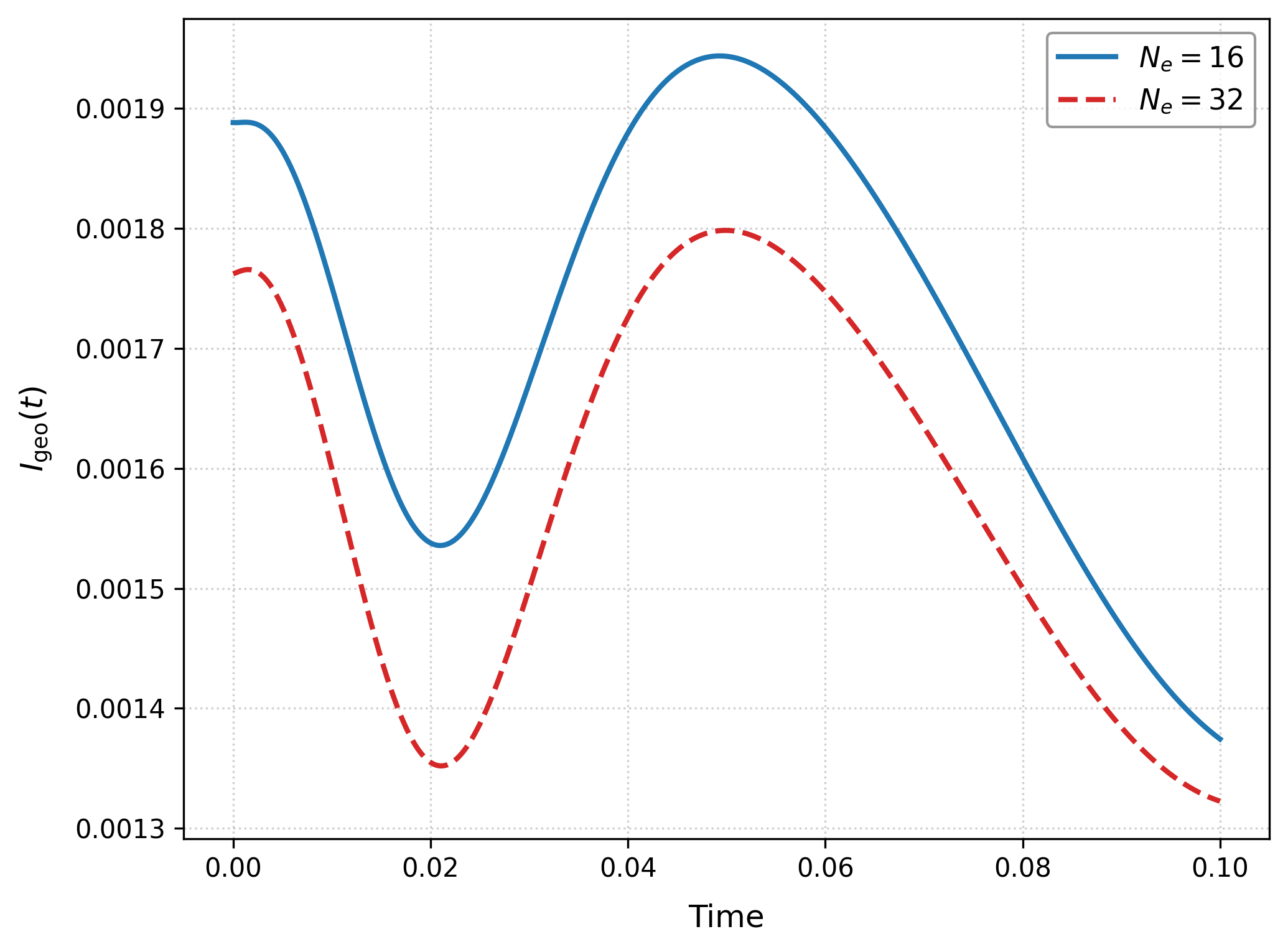}
    \end{subfigure}
    \hfill
    \begin{subfigure}{0.3\textwidth}
        \centering
        \includegraphics[width=\linewidth]{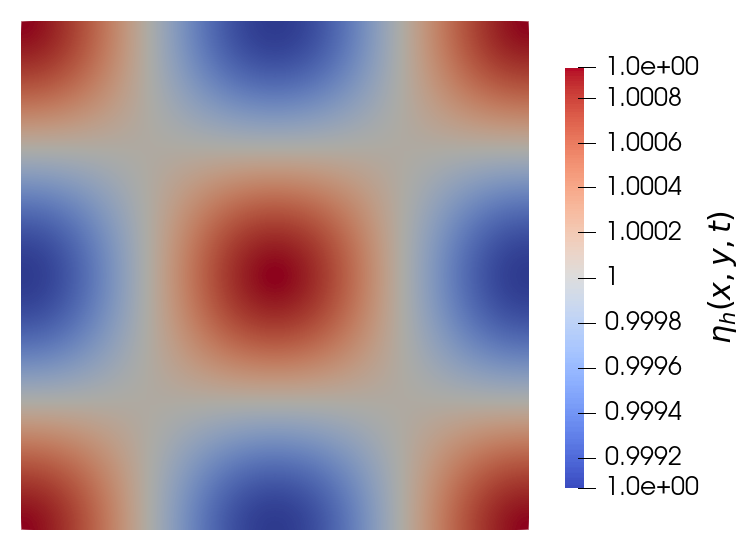}
    \end{subfigure}
    \hfill
    \begin{subfigure}{0.3\textwidth}
        \centering
        \includegraphics[width=\linewidth]{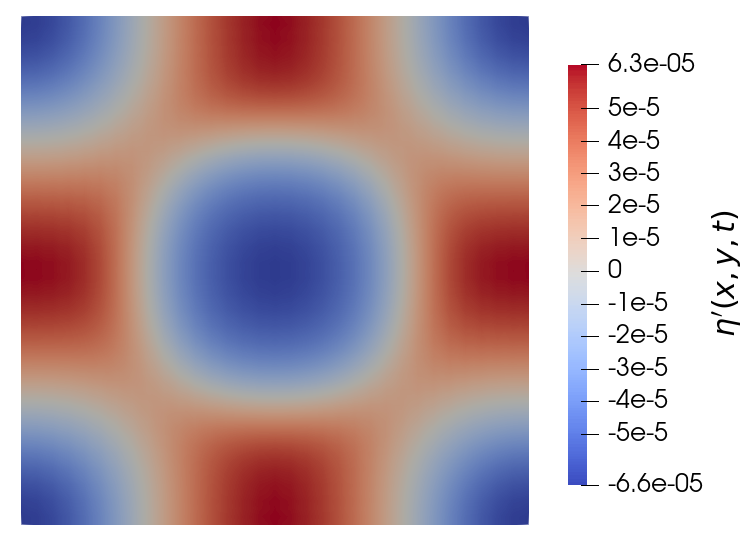}
    \end{subfigure}
    \caption{Representative numerical results for the smooth rotating benchmark with $\sigma=1.0$ and $k=2$. The top row displays results for $\beta=1.0$, while the bottom row corresponds to $\beta=3.0$. Columns from left to right present: the temporal evolution of the geostrophic imbalance $I_{\mathrm{geo}}(t)$, the total free-surface height $\eta_h$ at $t=T$ (for $N=32$), and the net free-surface height perturbation $\eta'$ at $t=T$ (for $N=32$).
    }
    \label{fig3}
\end{figure}

In contrast to the results observed in Section~\ref{sec:penalty-sensitivity}, the numerical errors remain largely insensitive to the variations in both $\beta$ and $\sigma$. As reported in Table~\ref{tab5}, for a given mesh, the $L^2$ errors for $\phi_\eta$ and $U$ ($V$) are virtually identical between $\beta = 1.0$ and $\beta = 3.0$. A similar conclusion holds true even upon mesh refinement from $N=16$ to $N=32$. 

This behavior reflects the balance-preserving property of the numerical formulation. The initial perturbation in \eqref{eq:rotating_perturbation} generates gravity waves that propagate and radiate through the domain, which is captured by  $I_{\mathrm{geo}}(t)$ in Figure~\ref{fig3}. This indicates that even under super-penalization, the scheme does not introduce artificial damping or spurious reflections of gravity waves. Furthermore, this is also confirmed by the spatial distributions of $\eta_h$ and $\eta'$ in Figure~\ref{fig3}, which demonstrate the physical consistency of the scheme near geostrophic balance.

\subsection{Topography-aware well-balanced tests}
\label{sec:topography-tests}

These tests examine the interaction between the penalty-dependent discretization and the bottom topography source term, possessing two main objectives.
First, it verifies whether the discrete pressure flux and the topography source term preserve the well-balanced rest state associated with the 
governing system \eqref{eq3}. Second, it examines the evolution of a small free-surface perturbation over non-flat topography, focusing on how 
the penalty parameters influence interface oscillations, numerical damping, positivity of the water depth, and stiffness of the resulting system.

\subsubsection{Flat-free-surface rest equilibrium}

The equilibrium is
\begin{equation}
\label{eq:lake_at_rest}
u=v=0,\qquad \eta=\eta_\star,
\end{equation}
or, in geopotential variables,
\begin{equation}
\label{eq:lake_at_rest_geo}
U=V=0,\qquad \phi_\eta=g\eta_\star,\qquad \phi=\phi_\eta+\phi_b.
\end{equation}
The bottom profile is chosen as
\begin{equation}
\label{eq:topography_profile}
b(x,y)=0.2\exp\left(-50\bigl((x-0.5)^2+(y-0.5)^2\bigr)\right),
\end{equation}
with $\eta_\star=1$. Consequently, the initial condition is given by
\begin{equation}
\label{eq:lake_initial}
\eta(x,y,0)=\eta_\star,\qquad H(x,y,0)=\eta_\star+b(x,y),\qquad u(x,y,0)=v(x,y,0)=0.
\end{equation}
The physical parameters are $f_c=0$ and $T=0.1$ with $k=2$, $N \in \{16, 32\}$, penalty exponents $\beta \in \{1, 3\}$, and prefactor $\sigma \in \{1, 5, 10\}$.

\begin{remark}
We emphasize that the equilibrium used in this test is the rest state associated with the present model, rather than a prescribed constant-depth state. Indeed, setting $u=v=0$ in the momentum equations gives $gH\nabla\eta=0$.
Since the water depth satisfies $H>0$, the admissible rest state is characterized by a flat free surface, $\eta=\eta_\star$, together with zero velocity.
Therefore, over a non-flat bottom topography $b(x,y)$, the corresponding depth is $H=\eta_\star+b(x,y)$,
which is generally not spatially constant.
A constant-depth initialization of the form $\eta+b=H_\star$ would give a non-constant $\eta$ and hence would not satisfy the steady balance of the model considered here. The purpose of the present test is therefore to verify preservation of this model-consistent rest state and to examine the behavior of small perturbations over topography.
\end{remark}

To evaluate the well-balanced property of the scheme, we monitor the free surface rest-state error $E_{\mathrm{rest}}(t)$ and the maximum spurious velocity $U_{\max}(t)$, defined respectively as
\begin{equation}
\label{eq:lake_error}
E_{\mathrm{rest}}(t) = \|\phi_{\eta,h}(t)-g\eta_\star\|_{L^2(\Omega)},
\end{equation}
\begin{equation}
\label{eq:spurious_velocity}
U_{\max}(t)= \max_{(x,y)\in\Omega} \sqrt{u_h(x,y,t)^2+v_h(x,y,t)^2}.
\end{equation}
The maximum values of these diagnostics recorded over the entire temporal trajectory $t \in [0, T]$ are documented in Table~\ref{tab6}.

\begin{table}[htbp]
\centering
\caption{Rest-state preservation errors for the flat-free-surface topography test.}
\label{tab6}
\begin{tabular}{ccccccc}
\toprule
$k$ & $\beta$ & $\sigma$ & $T$ & $N$ & $\max_t E_{\mathrm{rest}}(t)$ & $\max_t U_{\max}(t)$ \\
\midrule
 $2$ & $1$ & $1$ & $0.1$ & $16$ & $1.510 \times 10^{-14}$ & $1.191 \times 10^{-14}$ \\
 &  &  &  & $32$ & $1.490 \times 10^{-14}$ & $9.413\times 10^{-15}$ \\
$2$ & $1$ & $5$ & $0.1$ & $16$ & $1.510 \times 10^{-14}$ & $1.055 \times 10^{-14}$ \\
 &  &  &  & $32$ & $1.490 \times 10^{-14}$ & $1.037\times 10^{-14}$ \\
 $2$ & $1$ & $10$ & $0.1$ & $16$ & $1.510 \times 10^{-14}$ & $1.030 \times 10^{-14}$ \\
 &  &  &  & $32$ & $1.490 \times 10^{-14}$ & $9.581\times 10^{-15}$ \\
 $2$ & $3$ & $1$ & $0.1$ & $16$ & $1.520 \times 10^{-14}$ & $9.990 \times 10^{-15}$ \\
 &  &  &  & $32$ & $1.512 \times 10^{-14}$ & $7.229\times 10^{-15}$ \\
 $2$ & $3$ & $5$ & $0.1$ & $16$ & $1.520 \times 10^{-14}$ & $1.003 \times 10^{-14}$ \\
 &  &  &  & $32$ & $1.512 \times 10^{-14}$ & $6.776\times 10^{-15}$ \\
 $2$ & $3$ & $10$ & $0.1$ & $16$ & $1.520 \times 10^{-14}$ & $9.854 \times 10^{-15}$ \\
 &  &  & & $32$ & $1.512 \times 10^{-14}$ & $7.229 \times 10^{-15}$ \\
\bottomrule
\end{tabular}
\end{table}

The numerical results in Table~\ref{tab6} show machine-precision preservation of the tested lake-at-rest state. For all mesh and parameters, both $\max E_{\mathrm{rest}}(t)$ and $\max U_{\max}(t)$ remain at the level of machine precision, ranging from $10^{-15}$ to $10^{-14}$. This demonstrates that the discrete formulation preserves the steady state, even in the presence of the smooth non-flat bottom, thereby confirming the robustness and well-balanced nature of the proposed method.

\subsubsection{Small perturbation over topography}

To evaluate the robustness of the numerical scheme when a field interacts with localized bed variations, a localized perturbation is added to the flat-free-surface equilibrium:
\begin{equation}
\label{eq:small_perturbation}
\eta(x,y,0)=\eta_\star + \varepsilon \exp\left(-\gamma\bigl((x-x_p)^2+(y-y_p)^2\bigr)\right), \qquad H(x,y,0)=\eta(x,y,0)+b(x,y).
\end{equation}
The parameters are set to $\varepsilon=10^{-3}$, $\gamma=200$, $T=0.1$, and $(x_p,y_p)=(0.25,0.5)$. The velocity field is initially set to zero. The same mesh, polynomial degree, and penalty values used in the rest-state test are used here.

In this test, we monitor the minimum water depth
\[
H_{\min}(t)=\min_{(x,y)\in\Omega}\bigl(\eta_h(x,y,t)+b(x,y)\bigr),
\]
the perturbation amplitude
\[
E_{\mathrm{pert}}(t)=\|\phi_{\eta,h}(t)-g\eta_\star\|_{L^2(\Omega)},
\]
and the temporal maximum $J_{\max}$ of the interface-jump history defined in Section~\ref{sec:penalty-sensitivity}. Here the initial momentum is zero, so this jump maximum is not contaminated by a nonzero initial momentum projection.

These diagnostics are shown in Table~\ref{tab7}, and the snapshots with one-dimensional (1D) cross-sectional profiles are presented in Figure \ref{fig4}.

\begin{table}[htbp]
\centering
\caption{Diagnostics for the small free-surface perturbation over topography.}
\label{tab7}
\begin{tabular}{cccccccc}
\toprule
$k$ & $T$ & $\beta$ & $\sigma$ & $N$ & $H_{\min}$ & $\max_t E_{\mathrm{pert}}(t)$ & $J_{\max}$ \\
\midrule
$2$ & $0.1$ & $1$ & $1$ & $16$ & $9.999 \times 10^{-1}$ & $8.693 \times 10^{-4}$ & $4.505 \times 10^{-4}$\\
 &  &  &  & $32$ & $9.999 \times 10^{-1}$ & $8.694 \times 10^{-4}$ & $9.516 \times 10^{-5}$\\
$2$ & $0.1$ & $1$ & $5$ & $16$ & $9.999 \times 10^{-1}$ & $8.693 \times 10^{-4}$ & $4.031 \times 10^{-4}$\\
 &  &  &  & $32$ & $9.999 \times 10^{-1}$ & $8.694 \times 10^{-4}$ & $8.646 \times 10^{-5}$\\
$2$ & $0.1$ & $1$ & $10$ & $16$ & $9.999 \times 10^{-1}$ & $8.693 \times 10^{-4}$ & $3.585 \times 10^{-4}$\\
 &  &  &  & $32$ & $9.999 \times 10^{-1}$ & $8.694 \times 10^{-4}$ & $7.898 \times 10^{-5}$\\
$2$ & $0.1$ & $3$ & $1$ & $16$ & $9.999 \times 10^{-1}$ & $8.693 \times 10^{-4}$ & $1.176 \times 10^{-4}$\\
 &  &  &  & $32$ & $9.999 \times 10^{-1}$ & $8.694 \times 10^{-4}$ & $1.156 \times 10^{-5}$\\
$2$ & $0.1$ & $3$ & $5$ & $16$ & $9.999 \times 10^{-1}$ & $8.693 \times 10^{-4}$ & $3.470 \times 10^{-5}$\\
 &  &  &  & $32$ & $9.999 \times 10^{-1}$ & $8.694 \times 10^{-4}$ & $2.713 \times 10^{-6}$\\
$2$ & $0.1$ & $3$ & $10$ & $16$ & $9.999 \times 10^{-1}$ & $8.693 \times 10^{-4}$ & $1.874 \times 10^{-5}$\\
 &  &  &  & $32$ & $9.999 \times 10^{-1}$ & $8.694 \times 10^{-4}$ & $1.389 \times 10^{-6}$\\
\bottomrule
\end{tabular}
\end{table}

\begin{figure}[!htbp]
\centering

\begin{subfigure}[b]{0.24\textwidth}
    \centering
    \includegraphics[width=\textwidth]{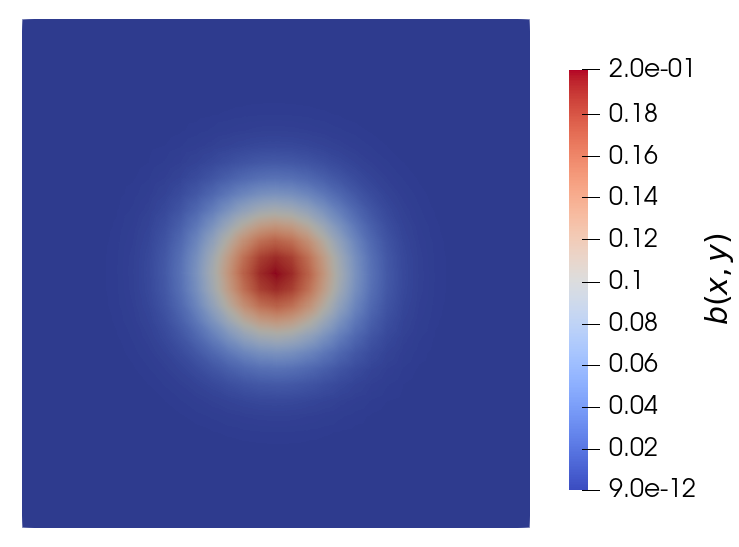}
\end{subfigure}
\hfill
\begin{subfigure}[b]{0.24\textwidth}
    \centering
    \includegraphics[width=\textwidth]{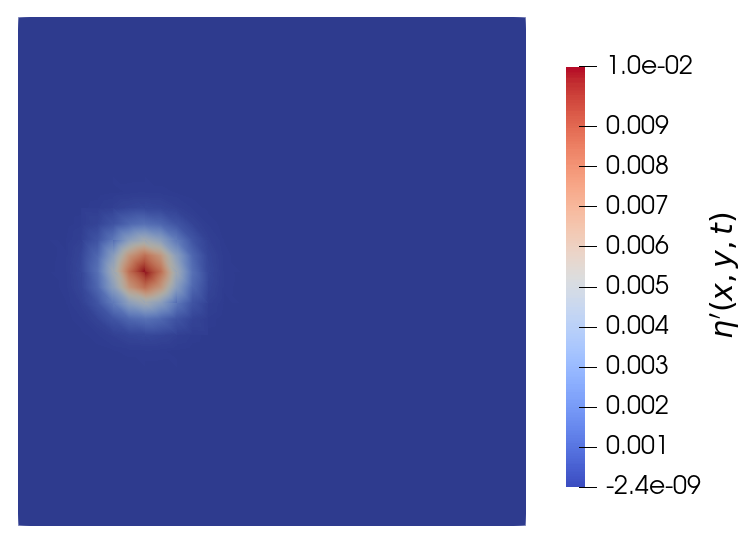}
\end{subfigure}
\hfill
\begin{subfigure}[b]{0.24\textwidth}
    \centering
    \includegraphics[width=\textwidth]{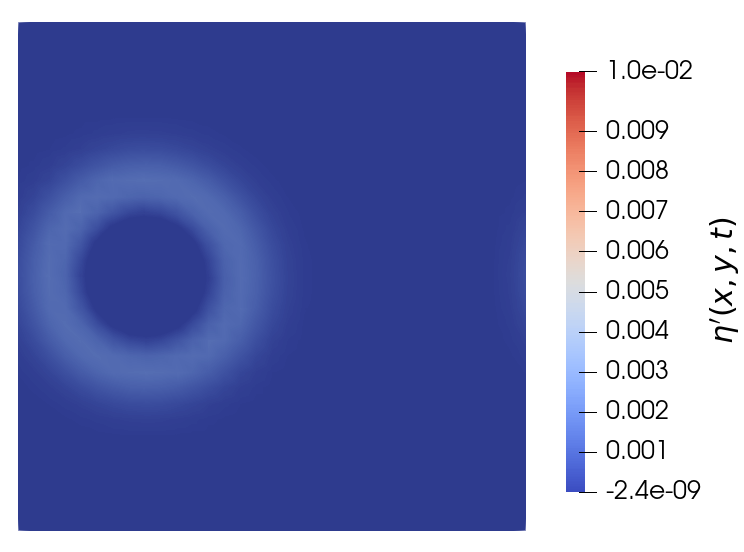}
\end{subfigure}
\hfill
\begin{subfigure}[b]{0.24\textwidth}
    \centering
    \includegraphics[width=\textwidth]{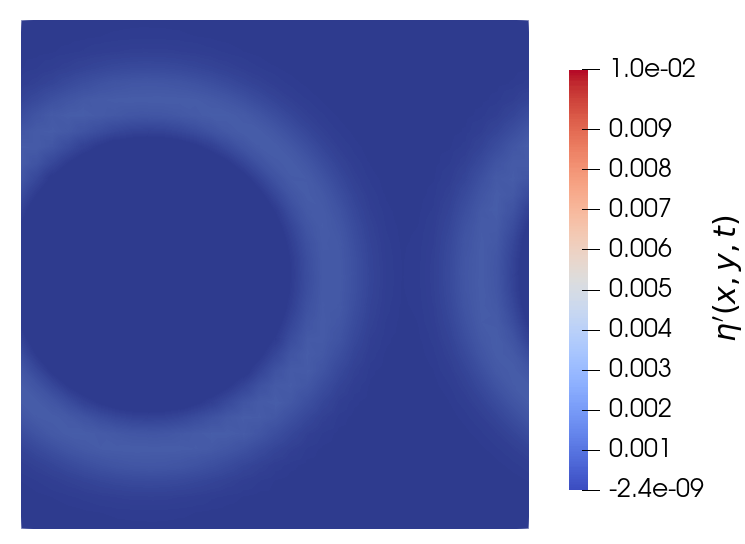}
\end{subfigure}

\vspace{0.3cm}

\begin{subfigure}[b]{0.24\textwidth}
    \centering
    \includegraphics[width=\textwidth]{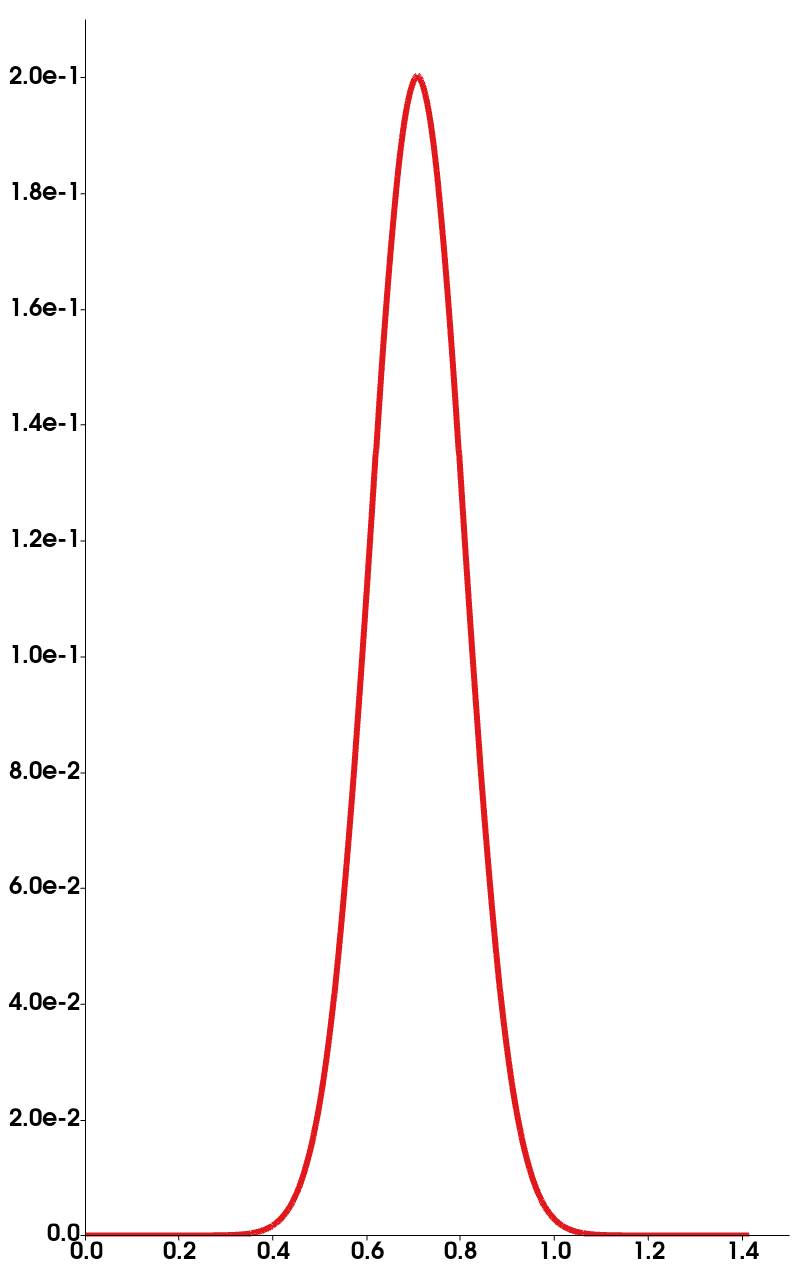}
    \caption{Bottom \\ ($\beta=1$)}
\end{subfigure}
\hfill
\begin{subfigure}[b]{0.24\textwidth}
    \centering
    \includegraphics[width=\textwidth]{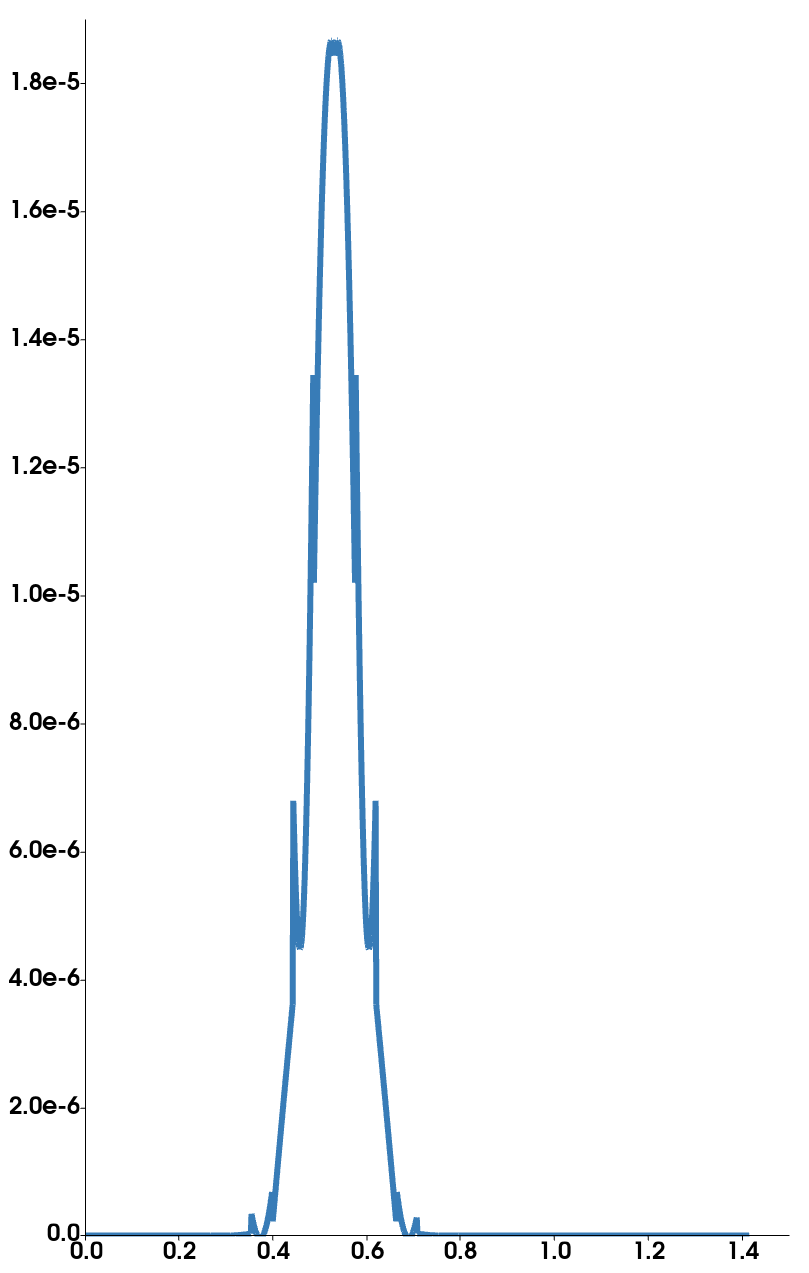}
    \caption{$\eta^\prime$ at $t=0$ \\ ($\beta=1$)}
\end{subfigure}
\hfill
\begin{subfigure}[b]{0.24\textwidth}
    \centering
    \includegraphics[width=\textwidth]{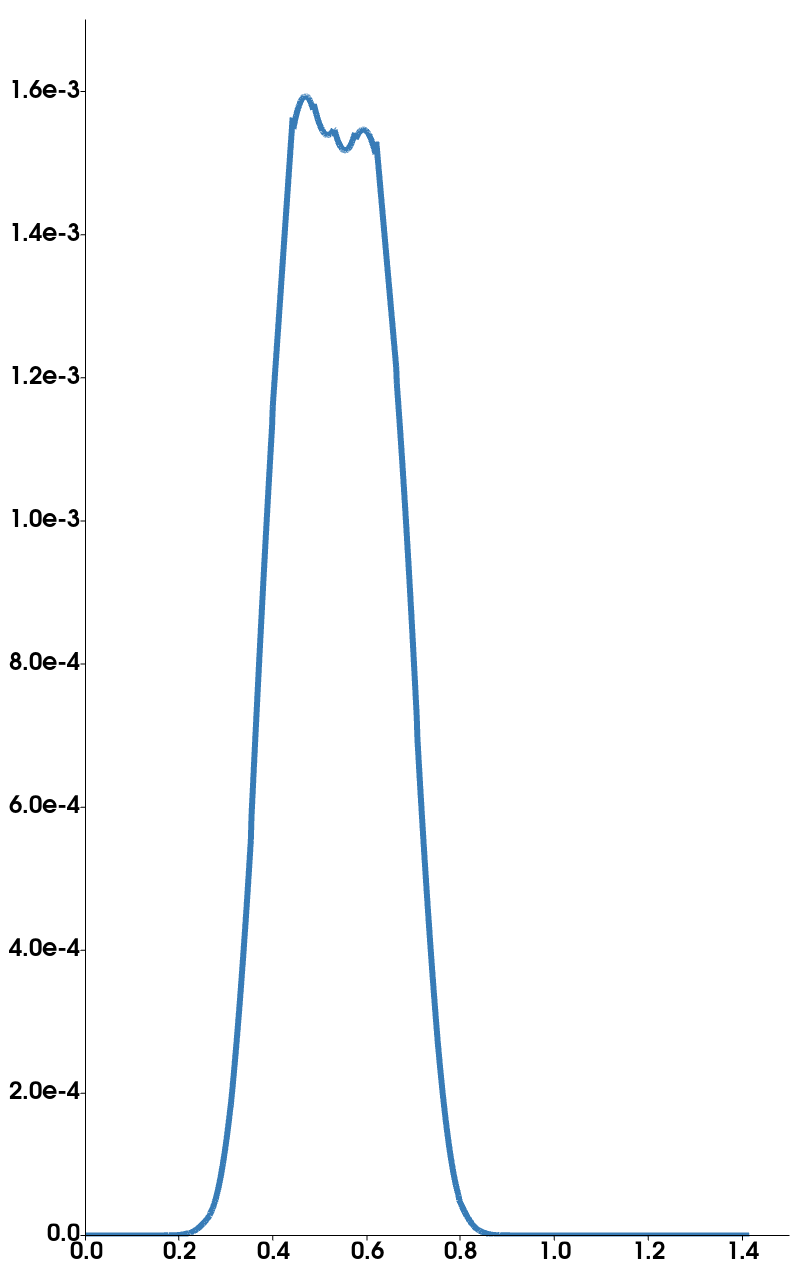}
    \caption{$\eta^\prime$ at $t=0.05$ \\ ($\beta=1$)}
\end{subfigure}
\hfill
\begin{subfigure}[b]{0.24\textwidth}
    \centering
    \includegraphics[width=\textwidth]{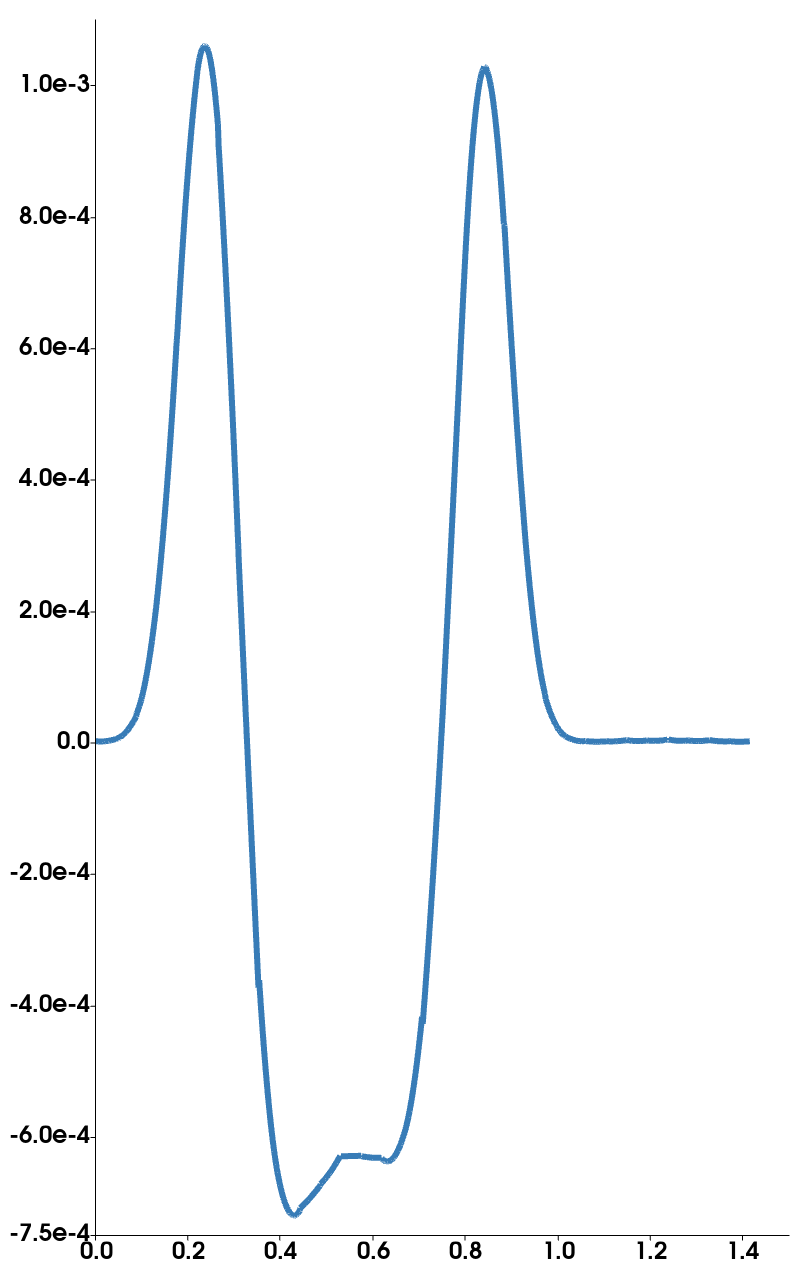}
    \caption{$\eta^\prime$ at $t=0.1$ \\ ($\beta=1$)}
\end{subfigure}

\vspace{0.3cm} 

\begin{subfigure}[b]{0.24\textwidth}
    \centering
    \includegraphics[width=\textwidth]{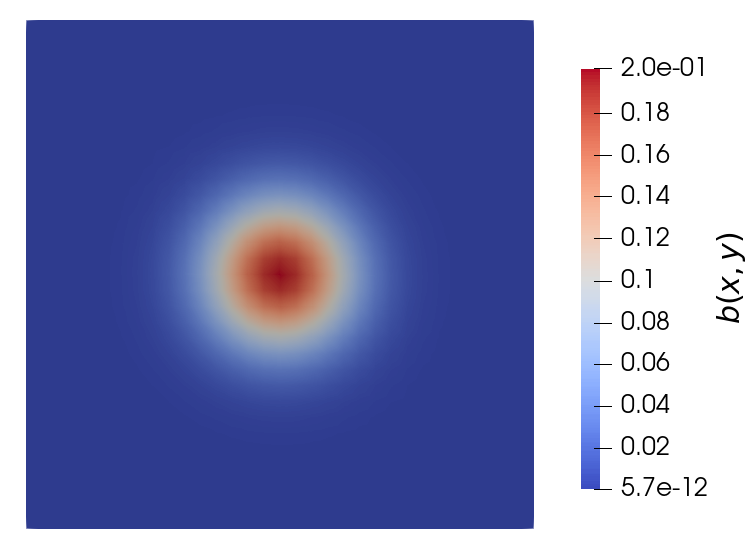}
\end{subfigure}
\hfill
\begin{subfigure}[b]{0.24\textwidth}
    \centering
    \includegraphics[width=\textwidth]{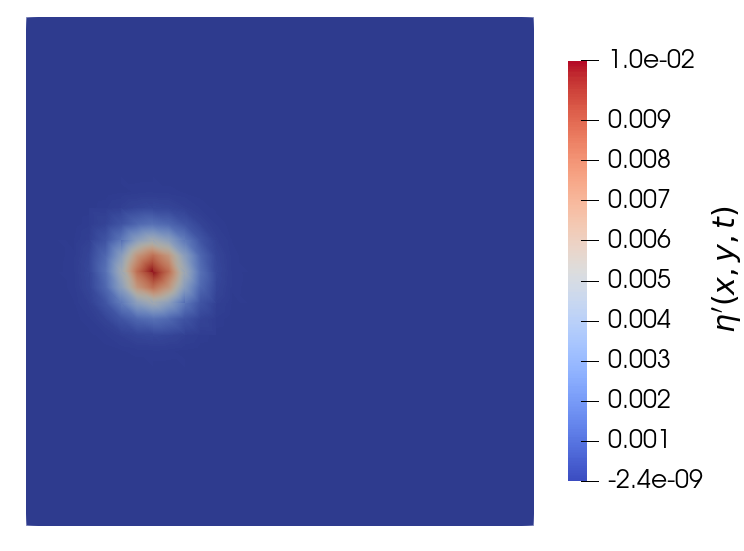}
\end{subfigure}
\hfill
\begin{subfigure}[b]{0.24\textwidth}
    \centering
    \includegraphics[width=\textwidth]{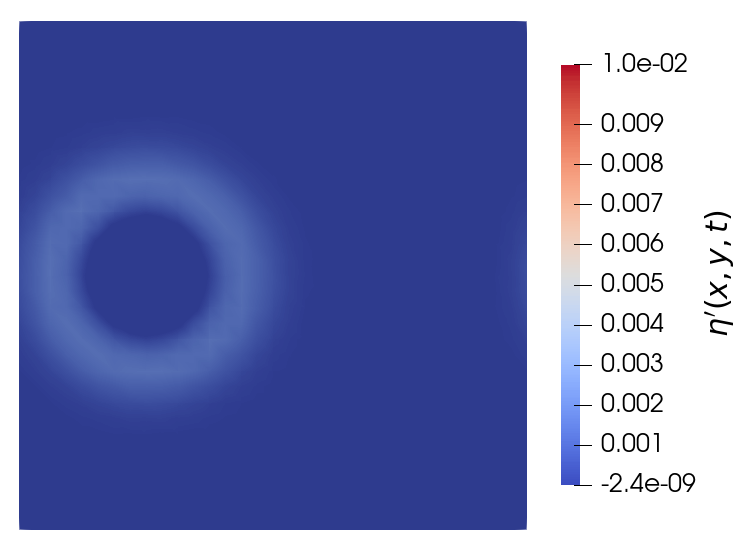}
\end{subfigure}
\hfill
\begin{subfigure}[b]{0.24\textwidth}
    \centering
    \includegraphics[width=\textwidth]{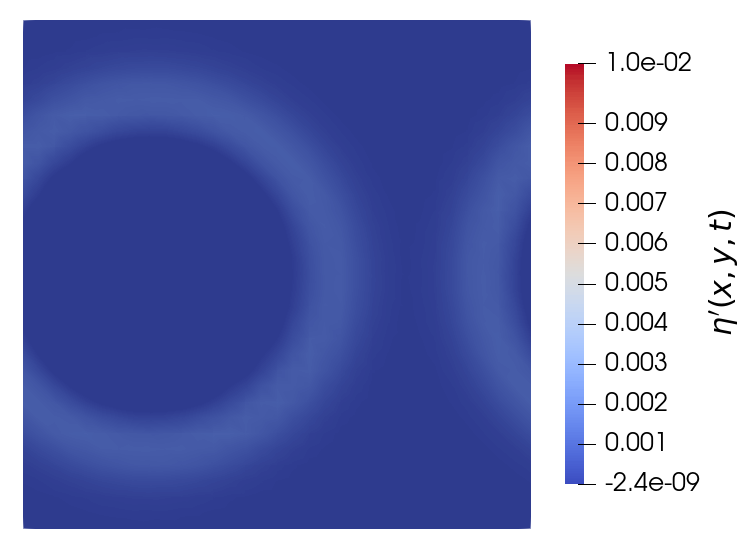}
\end{subfigure}

\vspace{0.5cm}

\begin{subfigure}[b]{0.24\textwidth}
    \centering
    \includegraphics[width=\textwidth]{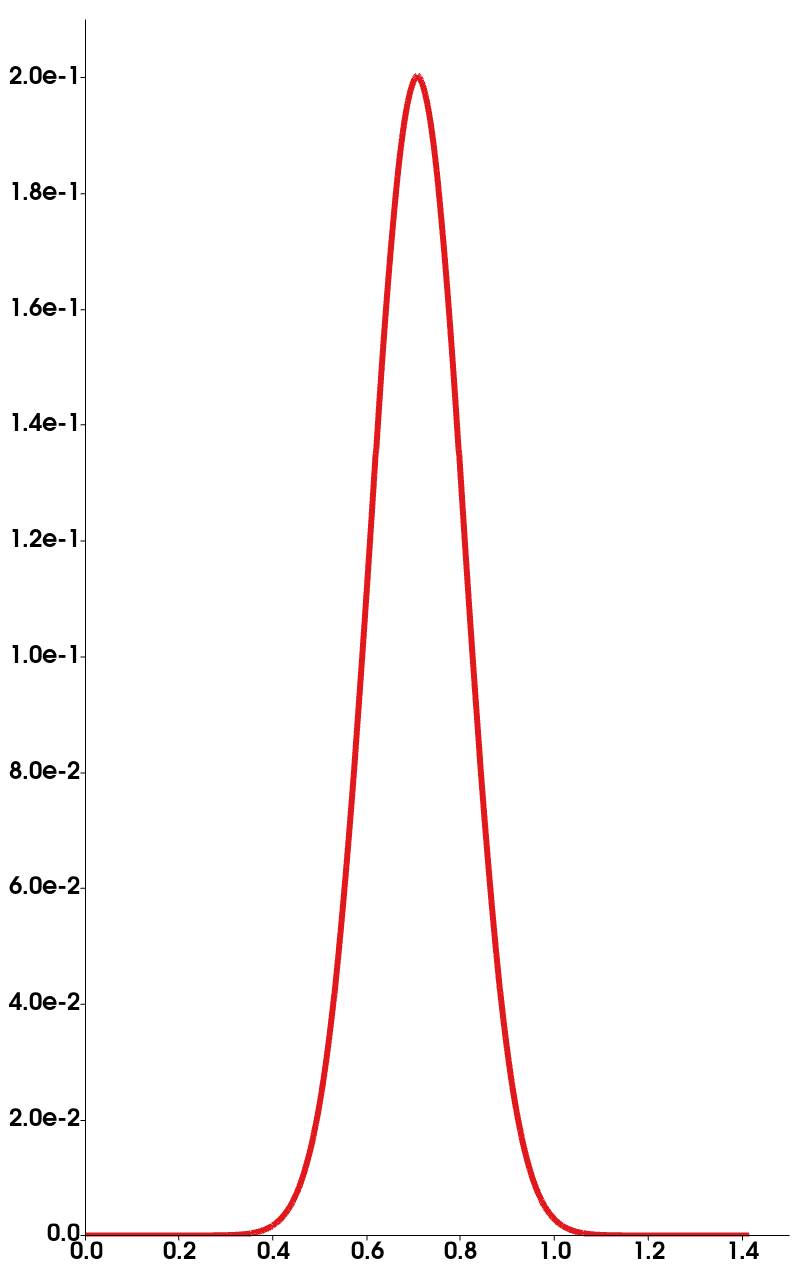}
    \caption{Bottom \\ ($\beta=3$)}
\end{subfigure}
\hfill
\begin{subfigure}[b]{0.24\textwidth}
    \centering
    \includegraphics[width=\textwidth]{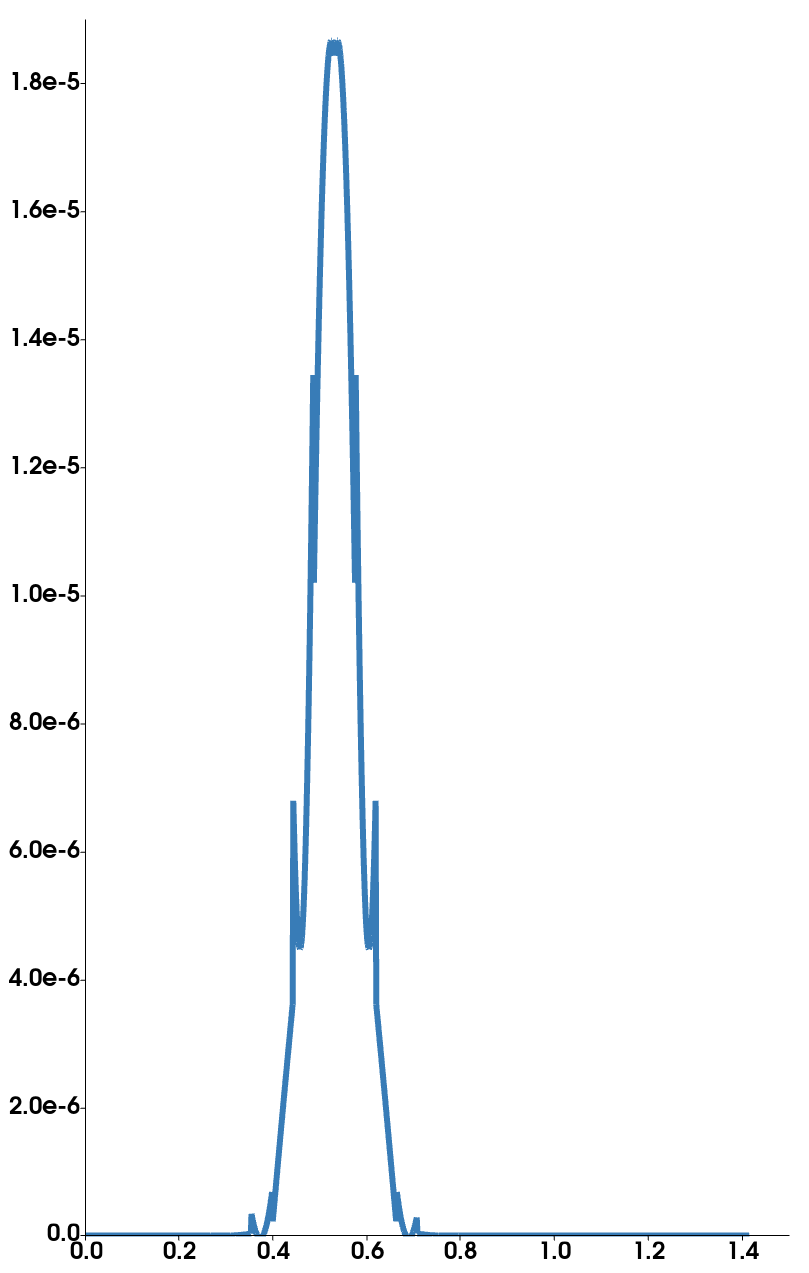}
    \caption{$\eta^\prime$ at $t=0$ \\ ($\beta=3$)}
\end{subfigure}
\hfill
\begin{subfigure}[b]{0.24\textwidth}
    \centering
    \includegraphics[width=\textwidth]{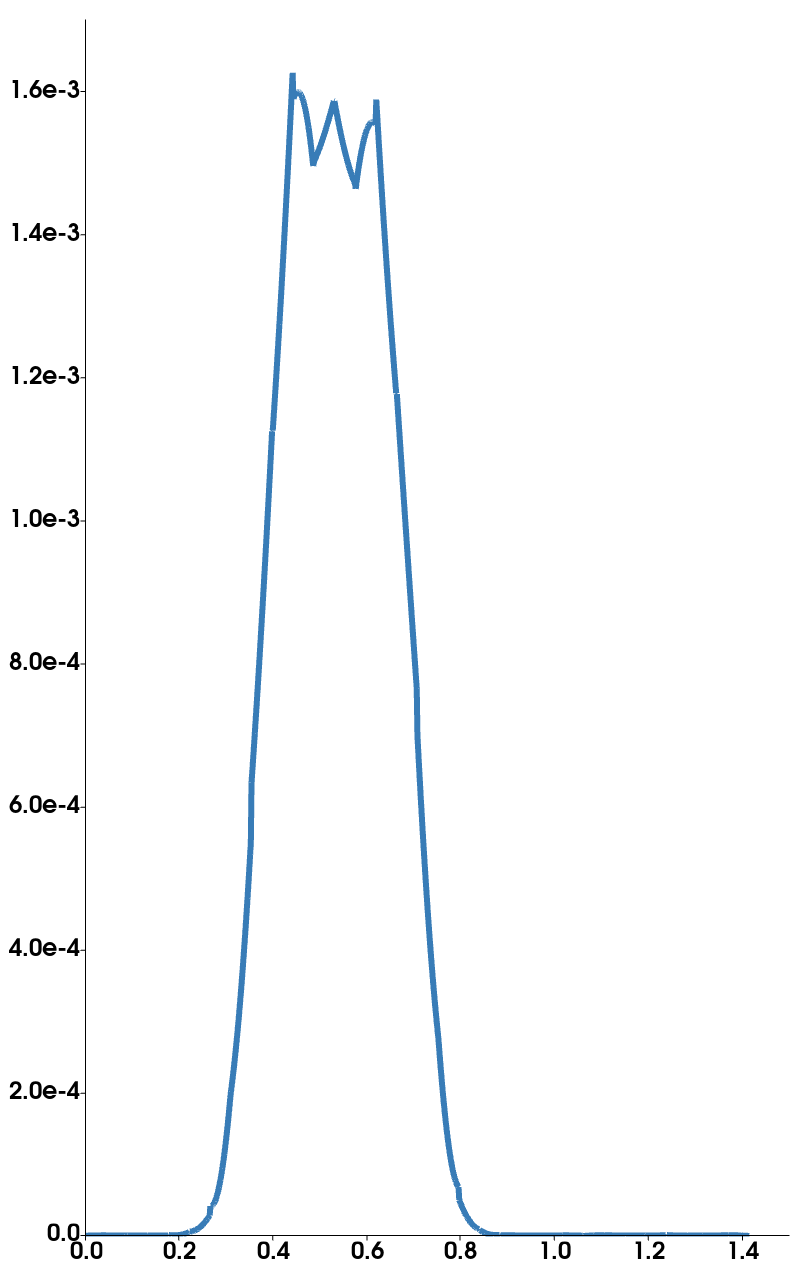}
    \caption{$\eta^\prime$ at $t=0.05$ \\ ($\beta=3$)}
\end{subfigure}
\hfill
\begin{subfigure}[b]{0.24\textwidth}
    \centering
    \includegraphics[width=\textwidth]{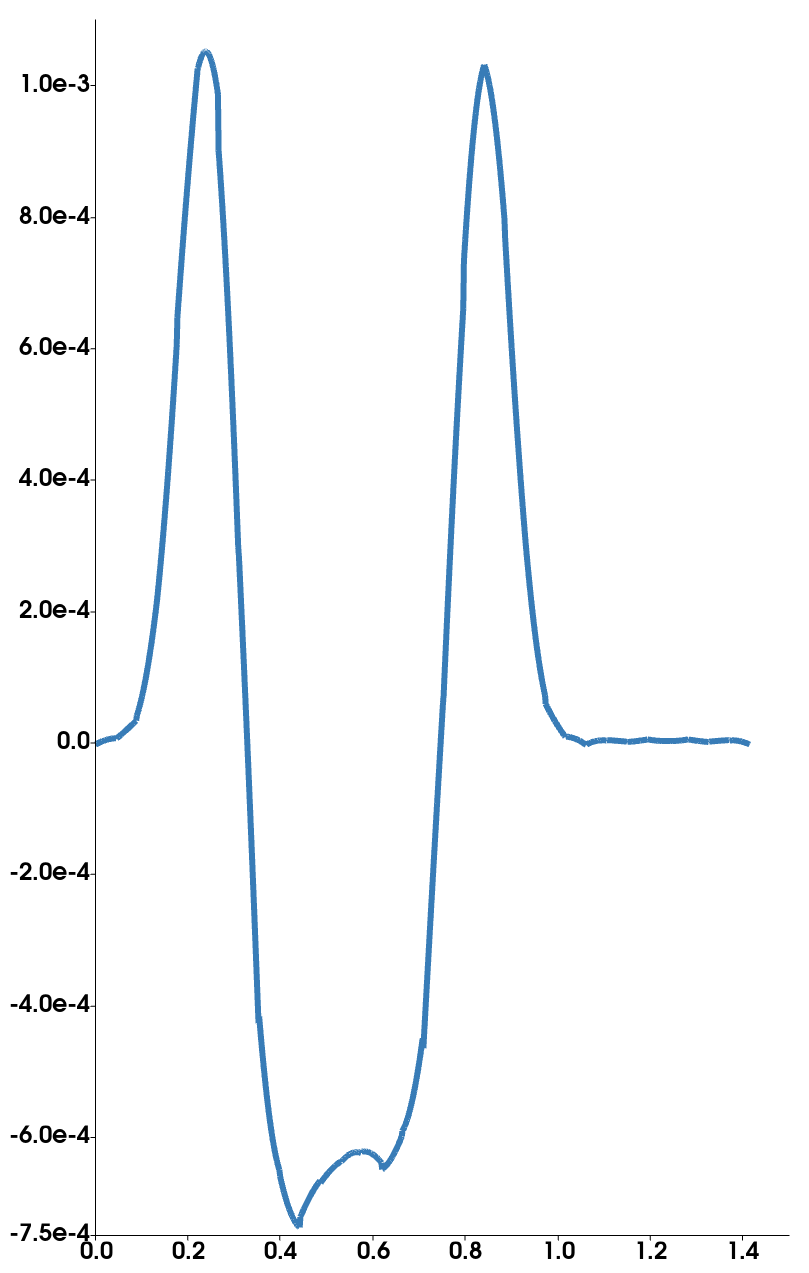}
    \caption{$\eta^\prime$ at $t=0.1$ \\ ($\beta=3$)}
\end{subfigure}

\caption{Representative solution snapshots and corresponding 1D cross--sectional profiles extracted along the main diagonal $y=x$ with $N=16$ and $\sigma=5$. Rows 1--2 correspond to the case $\beta=1$, whereas Rows 3--4 correspond to $\beta=3$.}
\label{fig4}
\end{figure}

As shown in Table~\ref{tab7}, the maximum perturbation amplitude $\max_t E_{\text{pert}}(t)$ remains virtually invariant over the short term when switching from $\beta=1$ to $\beta=3$. This numerical constancy rigorously proves that the super-penalization under $\beta=3$ avoids introducing excessive dissipation to the underlying wave propagation. Furthermore, the minimum total water depth $H_{\min}$ remains strictly positive, inherently preserving the positivity and highlighting the  robustness of the numerical operator.

However, a distinct divergence is observed in the behavior of the interface jumps. The super-penalized choice $\beta=3$ achieves a significantly sharper jump control, reflected in reducing the maximum jump seminorm $J_{\max}$ compared to $\beta=1$ (e.g., dropping from $8.646 \times 10^{-5}$ to $2.713 \times 10^{-6}$ for $N=32$ and $\sigma=5$). 

Visually, as shown in Figure~\ref{fig4}, the wave profiles under $\beta=1$ and $\beta=3$ are almost indistinguishable, demonstrating that both methods accurately track the perturbation wave. The only subtle divergence is the appearance of some localized grid-scale oscillations near the cell boundaries under $\beta=3$. This is a typical and well-understood phenomenon associated with over-penalization; crucially, this minor variation remains stable and does not amplify over extended simulation times.

Ultimately, this experiment highlights a critical computational trade-off that must be stated explicitly: While the super-penalized choice $\beta=3$ delivers superior mathematical control over interface jumps ($J_{\max}$), it does so at the expense of computational efficiency. The severe algebraic stiffness injected reduces the admissible explicit time step size, significantly increasing the overall CPU time. Therefore, for geophysical applications, the standard scaling $\beta=1$ remains the more balanced and computationally efficient choice.

\section{Conclusion}
\label{sec:conclusion}

We studied NIPG momentum diffusion in a DG discretization of the viscous rotating shallow-water equations in geopotential variables. The formulation combines a local Lax--Friedrichs treatment of the hyperbolic flux with an NIPG discretization of the viscous operator and uses the penalty law $\mu_e=\sigma h_e^{-\beta}$ to isolate the influence of interface stabilization. The analytical framework identifies the penalty-dependent jump scaling as the key mechanism entering continuity, coercivity, stability, and error estimates, while the numerical section is organized to test the same mechanism across convergence studies, parameter scans, rotating benchmarks, and topography-aware balance tests.

Our numerical experiments validate the theoretical predictions and further reveal an asymmetric sensitivity between the geopotential and momentum fields under different penalty configurations. In particular, under super-penalization ($\beta=3$), the momentum field still attains the optimal $L^2$ convergence rates in the convection-dominated regime tested here. However, this choice substantially increases the numerical stiffness of the system, leading to more restrictive time-step constraints and higher computational costs.
From a practical standpoint, these results suggest that the standard penalty scaling ($\beta=1$), combined with a moderate prefactor, provides a more balanced and efficient choice for simulating viscous shallow water flows.

For future investigation, extending the present single-layer formulation to multi-layer shallow water systems is of interest for capturing more complex baroclinic dynamics \cite{Gahounzo2026}. In addition, the strong stiffness introduced by the NIPG penalty terms motivates the development of semi-implicit time-integration methods, where the stiff viscous penalty operators and fast gravity-wave components are treated implicitly while the nonlinear advection terms remain explicit. Such approaches will require efficient solvers and robust preconditioners, such as multigrid-based \cite{Betteridge2021} or block preconditioners \cite{Cotter2023}, to handle the resulting large coupled linear systems efficiently.

\backmatter

\section*{Declarations}

\paragraph{Funding}
This work was supported by the Innovation Research Foundation of the National University of Defense Technology, the Youth Elite Scientists Sponsorship Program by CAST, and the National Natural Science Foundation of China (grant no.~12371374).

\paragraph{Competing interests}
The authors have no relevant financial or non-financial interests to disclose.

\paragraph{Ethics approval}
Not applicable.

\paragraph{Consent to participate}
Not applicable.

\paragraph{Consent for publication}
Not applicable.

\paragraph{Data and code availability}
All numerical data reported in the tables are contained in the manuscript. The source code used to generate the numerical results are available in \url{https://github.com/linlin-cabbage/Manuscript_2026}.

\paragraph{Author contributions}
Xue Zhang: conceptualization, methodology, software, validation, investigation, visualization, and writing--original draft. Jingmin Xia: conceptualization, methodology, supervision, project administration, funding acquisition, and writing--review and editing. Xu Qian: supervision, project administration, funding acquisition, and writing--review and editing. All authors read and approved the final manuscript.

\bibliography{main}

\end{document}